\title{\textsc{$p$-adic $L$-functions on metaplectic groups}}
\author{Salvatore Mercuri}
\renewcommand\Re{\operatorname{\mathfrak{Re}}}
\renewcommand\Im{\operatorname{\mathfrak{Im}}}
\newcommand{\eps}{\varepsilon}
\newcommand{\spa}{\operatorname{span}}
\newcommand{\id}{\operatorname{id}}
\newcommand{\tr}{\operatorname{tr}}
\newcommand{\Hom}{\operatorname{Hom}}
\newcommand{\ph}{\operatorname{\varphi}}
\newcommand{\mfrak}{\mathfrak}
\newcommand{\back}{\backslash}
\renewenvironment{psmallmatrix}
  {\left(\begin{smallmatrix}}
  {\end{smallmatrix}\right)}
\newcommand{\AAQ}{\operatorname{\mathbb{A}_{\mathbb{Q}}}}
\newcommand{\Tr}{\mathop{\rm Tr}\nolimits}
\newcommand{\pr}{\mathop{\rm pr}\nolimits}
\newcommand{\sgn}{\operatorname{sgn}}
\newcommand{\Vol}{\operatorname{Vol}}
\newcommand{\ord}{\operatorname{ord}}
\newcommand{\tors}{\operatorname{tors}}
\newcommand{\diag}{\operatorname{diag}}
\newcommand{\action}[1]{{}^{\sigma}\!{{#1}}}
\newcommand{\Waction}[1]{{}^{\iota_{\chi}^{-1}}\!{{#1}}}
\newcommand{\iotaaction}[1]{{}^{\iota}\!{{#1}}}
\newcommand*{\myfont}{\fontfamily{pag}\selectfont}
\newcommand\Sx{\text{{\myfont X}}}
\newcommand\So{\text{{\myfont O}}}
\newtheorem{lemma}{Lemma}[section]
\newtheorem{theorem}[lemma]{Theorem}
\newtheorem{corollary}[lemma]{Corollary}
\newtheorem{proposition}[lemma]{Proposition}
\numberwithin{equation}{section}
\begin{document}
\maketitle 

\begin{abstract} 
\noindent With respect to the analytic-algebraic dichotomy, the theory of Siegel modular forms of half-integral weight is lopsided; the analytic theory is strong whereas the algebraic lags behind. In this paper, we capitalise on this to establish the fundamental object needed for the analytic side of the Iwasawa main conjecture -- the $p$-adic $L$-function obtained by interpolating the complex $L$-function at special values. This is achieved through the Rankin-Selberg method and the explicit Fourier expansion of non-holomorphic Siegel Eisenstein series. The construction of the $p$-stabilisation in this setting is also of independent interest.
\end{abstract}

\tableofcontents

\section{Introduction}
\label{intro}

\noindent Traditionally, $p$-adic $L$-functions have dual constructions -- analytic and algebraic -- and it is the substance of the Iwasawa main conjecture that these two are equivalent. This conjecture can be formulated for various settings; for example, over $GL_1$, the conjecture asserts that the analytic construction -- Kubota-Leopoldt's $p$-adic interpolation of the Dirichlet $L$-function -- is equivalent to Iwasawa's algebraic $p$-adic $L$-function. The Iwasawa main conjecture for classical modular forms of integral weight is formulated over $GL_2$ and this has been a recent, active research area with its connections to the Birch and Swinnerton-Dyer conjecture, see \cite{Skinner1} and \cite{Skinner}. Provided one has both the analytic and algebraic machinery, the Iwasawa main conjecture can be formulated for higher dimensional modular forms and groups, for example \cite{Wan}

The algebraic theory of half-integral weight modular forms, both classical and metaplectic, has long been inchoate due to the difficulties present in developing the `Galois side'. Recent work by M. Weissmann in \cite{Weissmann} has made progress in this regard by developing $L$-groups for metaplectic covers, the length and methods of which further underline the difficulties present here. The analytic theory is substantial however, and in this paper we give the analytic construction of the $p$-adic $L$-function for Siegel modular forms of half-integral weight and any degree $n$. In \cite{Mercuri} we gave a similar construction when $n = 1$; in that case the $p$-adic $L$-function was already known to exist by the Shimura correspondence, this is not so for general $n>1$.

The proof found here is adapted from the method of A. Panchishkin found in Chapters 2 and 3 of \cite{Panchishkin}, which proves the existence of the analytic $p$-adic $L$-function for Siegel modular forms of integral weight and even degree $n$. This method makes critical use of the Rankin-Selberg method and reduces the question of $p$-adic boundedness of the $L$-function down to that of the Fourier coefficients of the Eisenstein series that are involved in the Rankin-Selberg integral expression. For full generality, it is assumed that $p$ does not divide the level of the modular form $f$ and a crucial step is to produce another form $f_0$ such that $p$ does divide the level. Significant modifications to the method of \cite{Panchishkin} were required to make this work in the metaplectic case -- this is Section \ref{pstabsection}. Outside of this, the success of Panchishkin's method is facilitated by the work of G. Shimura in developing the Rankin-Selberg integral expression in this setting, \cite{Shimuraexp}, and the arithmeticity of Eisenstein series, \cite[Chapters 16--17]{Shimurabook}. Interestingly, the $p$-adic boundedness of the Eisenstein series coefficients is almost immediate in this case, making the final step of the proof simpler than that found in \cite{Panchishkin}.

After preliminary Sections \ref{modformsection} and \ref{Lfunsection}, we establish the $p$-stabilisation in Section \ref{pstabsection}. Fairly elementary manipulations on the level of the Rankin-Selberg integral follow in Section \ref{tracesection}. Sections \ref{thetasection} and \ref{eisensteinsection} are devoted to transformation formulae of theta series and Fourier expansions of Eisenstein series -- these are relatively well-known. Finally the statement and proof of the main theorem, and the subsequent existence of the $p$-adic $L$-function, are given in Section \ref{interpolation}.

\section{Siegel modular forms}
\label{modformsection}

\noindent This section runs through the very basics of the modular forms that we study and their Fourier expansions are detailed.

For any ring $R$ and any matrix $a\in M_n(R)$, note the use of the following notation: $a>0$ ($a\geq 0$) to mean that $a$ is positive definite (resp. positive semi-definite), $|a| := \det(a)$, $\|a\| :=|\det(a)|$, and $\tilde{a} := (a^T)^{-1}$. For any collection $a_1, \dots, a_r$ of matrices with entries in $R$, let $\diag[a_1, \dots, a_r]$ be the matrix whose $j$th diagonal block is $a_j$ and is zero off the diagonal. 

Let $\AAQ$ and $\mathbb{I}_{\mathbb{Q}}$ denote the adele ring and idele group, respectively, of $\mathbb{Q}$. The Archimedean place is denoted by $\infty$ and the non-Archimedean places by $\mathbf{f}$. If $G$ is an algebraic group, let $G_{\mathbb{A}}$ denote its adelisation. Let $G_{\infty} := G(\mathbb{R})$, $G_p := G(\mathbb{Q}_p)$, and denote by $G_{\mathbf{f}}$ the subgroup of elements of $G_{\mathbb{A}}$ whose Archimedean place is the identity of $G_{\infty}$. View $G$ as a subgroup of $G_{\mathbb{A}}$ by embedding diagonally at every place, but view $G_{\infty}$ and $G_p$ as subgroups by embedding place-wise. Recall the adelic norm
\[
	|x|_{\mathbb{A}} = |x_{\infty}|\prod_p|x_p|_p
\]
where $x\in\AAQ$, $|\cdot|$ denotes the usual absolute value on $\mathbb{R}$, and $|\cdot|_p$ denotes the $p$-adic absolute value, normalised in the sense that $|p|_p = p^{-1}$. Let $\mathbb{T}$ denote the unit circle and define three $\mathbb{T}$-valued characters on $\mathbb{C}$, $\mathbb{Q}_p$, and $\AAQ$ respectively by
\begin{align*}
	e:z&\mapsto e^{2\pi iz}, \\
	e_p:x&\mapsto e(-\{x\}), \\
	e_{\mathbb{A}}:x&\mapsto e(x_{\infty})\prod_p e_p(x_p),
\end{align*}
where $\{x\}$ denotes the fractional part of $x\in\mathbb{Q}_p$; if $x\in \AAQ$ and $z\in\mathbb{C}$ then write $e_{\infty}(x) = e(x_{\infty})$ and $e_{\infty}(z) = e(z)$.

For any fractional ideal $\mfrak{r}$ of $\mathbb{Q}$ let $\mfrak{r}_p$ denote the completion (with respect to the $p$-adic norm) of the localisation of $\mfrak{r}$ at the prime $p$, which is an ideal of $\mathbb{Z}_p$. Understand $0\leq N(\mfrak{r})\in\mathbb{Q}$ to be the unique positive generator of $\mfrak{r}$.

Write any $\alpha\in GL_{2n}(\mathbb{Q})$ as
\[	
	\alpha = \begin{pmatrix} a_{\alpha} & b_{\alpha} \\ c_{\alpha} & d_{\alpha}\end{pmatrix}
\]
where $x_{\alpha}\in M_n(\mathbb{Q})$ for $x\in\{a, b, c, d\}$. Define an algebraic group $G$, subgroup $P\leq G$, and the Siegel upper half-space $\mathbb{H}_n$ by
\begin{align*}
	G:&= Sp_n(\mathbb{Q}) = \{\alpha\in GL_{2n}(\mathbb{Q})\mid \alpha^T\iota\alpha = \iota\},\ \hspace{20pt} \iota := \begin{pmatrix} 0 & -I_n \\ I_n & 0\end{pmatrix}, \\
	P:&= \{\alpha\in G\mid c_{\alpha} = 0\}, \\
	\mathbb{H}_n :&= \{z = x+iy\in M_n(\mathbb{C})\mid x, y\in M_n(\mathbb{R}), z^T = z, y>0\}.
\end{align*}

A half-integral weight is an element $k\in\mathbb{Q}$ such that $k-\frac{1}{2}\in\mathbb{Z}$; an integral weight is an element $\ell\in\mathbb{Z}$. The factor of automorphy of half-integral weight involves taking a square root; to guarantee consistency of the choice of root one uses the double metaplectic cover $Mp_n$ of $Sp_n$. The localisations $M_p := Mp_n(\mathbb{Q}_p)$ and the adelisation $M_{\mathbb{A}}$ of $Mp_n(\mathbb{Q})$ can be described as groups of unitary transformations, respectively, on $L^2(\mathbb{Q}_p^n)$ and $L^2(\AAQ^n)$ with the exact sequences
\begin{align*}
	1\to\mathbb{T}\to M_x\to G_x\to 1,
\end{align*}
where $x\in\{p, \mathbb{A}\}$. There are natural projections $\pr_{\mathbb{A}}:M_{\mathbb{A}}\to G_{\mathbb{A}}$ and $\pr_p:M_p\to G_p$, either of which will usually be denoted $\pr$ as the context is clear. On the flip side, there are natural lifts $r:G\to M_{\mathbb{A}}$ and $r_P:P_{\mathbb{A}}\to M_{\mathbb{A}}$ through which we view $G$ and $P_{\mathbb{A}}$ as subgroups of $M_{\mathbb{A}}$.

For any two fractional ideals $\mfrak{x}, \mfrak{y}$ of $\mathbb{Q}$ such that $\mfrak{xy}\subseteq\mathbb{Z}$, congruence subgroups are defined by the following respective subgroups of $G_p, G_{\mathbb{A}}$, and $G$:
\begin{align*}
	D_p[\mfrak{x}, \mfrak{y}] :&= \{x\in G_p\mid a_x, d_x\in M_n(\mathbb{Z}_p), b_x\in M_n(\mfrak{x}_p), c_x\in M_n(\mfrak{y}_p)\}, \\
	D[\mfrak{x}, \mfrak{y}] :&= Sp_n(\mathbb{R})\prod_p D_p[\mfrak{x}, \mfrak{y}], \\
	\Gamma[\mfrak{x}, \mfrak{y}] :&= G\cap D[\mfrak{x}, \mfrak{y}].
\end{align*}
Typically these will take the form $\Gamma[\mfrak{b}^{-1}, \mfrak{bc}]$ for certain fractional ideals $\mfrak{b}$ and integral ideals $\mfrak{c}$.

One of the key differences in the theory of half-integral weight modular forms is in the congruence subgroups one considers. The factor of automorphy involved can only be defined for a certain subgroup $\mfrak{M}\leq M_{\mathbb{A}}$, and any congruence subgroups $\Gamma$ must therefore be contained in $\mfrak{M}$. This subgroup, $\mfrak{M}$, is defined via the theta series and is given by
\begin{align*}
	C_p^{\theta} :&= \{\xi\in D_p[1, 1]\mid (a_{\xi}b_{\xi}^T)_{ii}\in 2\mathbb{Z}_p, (c_{\xi}d_{\xi}^T)_{ii}\in 2\mathbb{Z}_p, 1\leq i\leq n\}, \\
	C^{\theta} :&= Sp_n(\mathbb{R})\prod_p C_p^{\theta}, \\
	\mfrak{M} :&= \{\sigma\in M_{\mathbb{A}}\mid \pr(\sigma)\in P_{\mathbb{A}}C^{\theta}\}.
\end{align*}
Typically we shall take $\mfrak{b}$ and $\mfrak{c}$ such that $\Gamma[\mfrak{b}^{-1}, \mfrak{bc}]\leq \Gamma[2, 2]\leq \mfrak{M}$.

The spaces defined above interact with each other as follows. The action of $Sp_n(\mathbb{R})$ on $\mathbb{H}_n$ and the traditional factor of automorphy are given by
\begin{align*}
	\gamma\cdot z = \gamma z :&= (a_{\gamma}z+b_{\gamma})(c_{\gamma}z+d_{\gamma})^{-1}, \\
	j(\gamma, z) :&= \det(c_{\gamma}z+d_{\gamma}),
\end{align*}
where $\gamma\in Sp_n(\mathbb{R})$ and $z\in\mathbb{H}_n$. If $\alpha\in G_{\mathbb{A}}$ then we extend the above by $\alpha\cdot z = \alpha_{\infty}\cdot z$ and $j(\alpha, z) = j(\alpha_{\infty}, z)$. If $\sigma\in M_{\mathbb{A}}$ with $\alpha = \pr(\sigma)\in G_{\mathbb{A}}$ then put $x_{\sigma} = x_{\alpha}$ for any $x\in~\{a, b, c, d\}$; the action of $M_{\mathbb{A}}$ on $\mathbb{H}_n$ is given by $\sigma\cdot z = \alpha\cdot z$.

For any $\sigma\in\mfrak{M}$ we can define a holomorphic function $h_{\sigma} = h(\sigma, \cdot):\mathbb{H}_n\to\mathbb{C}$ satisfying the following properties
\begin{align}
	h(\sigma, z)^2 &= \zeta j(\pr(\sigma), z)\ \text{for a constant $\zeta = \zeta(\sigma)\in \mathbb{T}$}, \label{h1} \\
	h(r_P(\gamma), z) &= |\det(d_{\gamma})_{\infty}^{\frac{1}{2}}|\ \text{if $\gamma\in P_{\mathbb{A}}$}, \label{h2} \\
	h(\rho\sigma\tau, z) &= h(\rho, z)h(\sigma, \tau z)h(\tau, z)\ \text{if $\pr(\rho)\in P_{\mathbb{A}}$ and $\pr(\tau)\in C^{\theta}$}. \label{h3}
\end{align}
The proofs for the above three properties can be found in \cite[pp. 294--295]{Shimuraold}. 

If $k$ is a half-integral weight then put $[k] := k-\frac{1}{2}\in\mathbb{Z}$; if $\ell$ is an integral weight then put $[\ell] := \ell$. The factors of automorphy of half-integral weights $k$ and integral weights $\ell$ are given as
\begin{align*}
	j_{\sigma}^k(z) :&= h_{\sigma}(z)j(\pr(\sigma), z)^{[k]}, \\
	j_{\alpha}^{\ell}(z) :&= j(\alpha, z)^{\ell},
\end{align*}
where $\sigma\in \mfrak{M}$, $\alpha\in G_{\mathbb{A}}$, and $z\in\mathbb{H}_n$. Given a function $f:\mathbb{H}_n\to\mathbb{C}$ and an element $\xi\in G_{\mathbb{A}}$ or $\mfrak{M}$, the \emph{slash operator} of an integral or half-integral weight $\kappa\in\frac{1}{2}\mathbb{Z}$ is defined by
\begin{align*}
	(f||_{\kappa}\xi)(z) :&= j_{\xi}^{\kappa}(z)^{-1}f(\xi\cdot z).
\end{align*}

\begin{definition}\label{modform} Let $\kappa\in\frac{1}{2}\mathbb{Z}$ be an integral or half-integral weight, and let $\Gamma\leq G$ be a congruence subgroup with the assumption that $\Gamma\leq\mfrak{M}$ if $\kappa\notin\mathbb{Z}$. Denote by $C_{\kappa}^{\infty}(\Gamma)$ the complex vector space of $C^{\infty}$ functions $f:\mathbb{H}_n\to\mathbb{C}$ such that $f||_{\kappa}\alpha = f$ for any $\alpha\in\Gamma$. Let $\mathcal{M}_{\kappa}(\Gamma)\subseteq C_{\kappa}^{\infty}(\Gamma)$ denote the subspace of holomorphic functions (with the additional cusp holomorphy condition if $n=1$). Elements of $\mathcal{M}_{\kappa}(\Gamma)$ are called \emph{modular forms of weight $\kappa$, level $\Gamma$}; if $\kappa\notin\mathbb{Z}$ they are also known as \emph{metaplectic modular forms}.
\end{definition}

Elements of $C_{\kappa}^{\infty}(\Gamma)$ and $\mathcal{M}_{\kappa}(\Gamma)$ have Fourier expansions summing over positive semi-definite symmetric matrices, the precise forms for which are given later in this section. The subspace $\mathcal{S}_{\kappa}(\Gamma)\subseteq\mathcal{M}_{\kappa}(\Gamma)$ is characterised by all forms $f$ such that the Fourier expansion of $f||_{\kappa}\sigma$ sums over positive definite symmetric matrices, for any $\sigma\in G_{\mathbb{A}}$ if $\kappa\in \mathbb{Z}$, or for any $\sigma\in\mfrak{M}$ if $\kappa\notin\mathbb{Z}$. Write
\[
	\mathcal{X}_{\kappa} = \bigcup_{\Gamma}\mathcal{X}_{\kappa}(\Gamma)
\]
where $\mathcal{X}\in\{\mathcal{M}, \mathcal{S}\}$ and the union is taken over all congruence subgroups of $G$ (that are contained in $\mfrak{M}$ if $\kappa\notin\mathbb{Z}$).

Take a fractional ideal $\mfrak{b}$ and integral ideal $\mfrak{c}$ and put $\Gamma = \Gamma[\mfrak{b}^{-1}, \mfrak{bc}]$; when $\kappa\notin\mathbb{Z}$ always make the crucial assumptions that 
\begin{align}
	\mfrak{b}^{-1}&\subseteq 2\mathbb{Z}, \label{BC1} \\
	\mfrak{bc}&\subseteq 2\mathbb{Z}, \label{BC2}
\end{align}
then we have $\Gamma \leq\mfrak{M}$ in this case.

By a Hecke character of $\mathbb{Q}$ we mean a continuous homomorphism $\psi:\mathbb{I}_{\mathbb{Q}}/\mathbb{Q}^{\times}\to\mathbb{T}$. Denote the restrictions to $\mathbb{R}^{\times}$, $\mathbb{Q}_p^{\times}$, and $\mathbb{Q}_{\mathbf{f}}^{\times}$ by $\psi_{\infty}$, $\psi_p$, and $\psi_{\mathbf{f}}$ respectively. We have that $\psi_{\infty}(x) = \sgn(x_{\infty})^t|x_{\infty}|^{i\nu}$ for $t\in\mathbb{Z}$ and $\nu\in\mathbb{R}$ and we say that $\psi$ is \emph{normalised} if $\nu = 0$. For any integral ideal $\mfrak{a}$ let $\psi_{\mfrak{a}} = \prod_{p\mid\mfrak{a}}\psi_p$.

Now take a normalised Hecke character $\psi$ of $\mathbb{Q}$ such that 
\begin{align}
	\psi_p(a) &= 1\ \text{if $a\in\mathbb{Z}_p^{\times}$ and $a\in 1+\mfrak{c}_p$}, \label{PC1} \\
	\psi_{\infty}(x)^n &= \sgn(x_{\infty})^{n[\kappa]}. \label{PC2}
\end{align}
Modular forms of character $\psi$ are then defined by
\begin{align*}
	C_{\kappa}^{\infty}(\Gamma, \psi) :&= \{F:\mathbb{H}_n\to\mathbb{C}\in C_{\kappa}^{\infty}\mid F||_{\kappa}\gamma = \psi_{\mfrak{c}}(|a_{\gamma}|)F\ \text{for all $\gamma\in\Gamma$}\}, \\
	\mathcal{M}_{\kappa}(\Gamma, \psi) :&= \mathcal{M}_{\kappa}\cap C_{\kappa}^{\infty}(\Gamma, \psi), \\
	\mathcal{S}_{\kappa}(\Gamma, \psi) :&= \mathcal{S}_{\kappa}\cap\mathcal{M}_{\kappa}(\Gamma, \psi).
\end{align*}

Understand $\pr = \id$ if $\kappa\in\mathbb{Z}$. If $f\in\mathcal{M}_{\kappa}(\Gamma, \psi)$ then its adelisation $f_{\mathbb{A}}:\pr^{-1}(G_{\mathbb{A}}) \to\mathbb{C}$ is
\[
	f_{\mathbb{A}}(x) := \psi_{\mfrak{c}}(|d_w|)(f||_{\kappa}w)(\mathbf{i})
\]
where $x = \alpha w$ for $\alpha\in G$ and $w\in \pr^{-1}(D[\mfrak{b}^{-1}, \mfrak{bc}])$, and $\mathbf{i} = iI_n$. To give the precise Fourier expansions of these forms, define the following spaces of symmetric matrices:
\begin{align*}
	S:&=\{\xi\in M_n(\mathbb{Q})\mid \xi^T=\xi\},
&&\hspace{10pt}S_+:=\{\xi\in S\mid \xi\geq 0\}, \\
	S^{\triangledown} :&= \{\xi\in S\mid \xi_{ii}\in\mathbb{Z}, \xi_{ij}\in\tfrac{1}{2}\mathbb{Z}, i<j\}, && \hspace{10pt}S_+^{\triangledown} := S^{\triangledown}\cap S_+, \\
	S(\mfrak{r}):&=S\cap M_n(\mfrak{r}) ,
&&S_{\mathbf{f}}(\mfrak{r}):=\prod_{p\in\mathbf{f}}S(\mfrak{r})_p,
\end{align*}
for any fractional ideal $\mfrak{r}$ of $\mathbb{Q}$.

Take a congruence subgroup $\Gamma = \Gamma[\mfrak{b}^{-1}, \mfrak{bc}]$ (contained in $\mfrak{M}$ if $\kappa\notin\mathbb{Z}$), a modular form $f\in\mathcal{M}_{\kappa}(\Gamma, \psi)$, and matrices $q\in GL_n(\AAQ)$, $s\in S_{\mathbb{A}}$. The Fourier expansion of $f_{\mathbb{A}}$ is given as
\[
	f_{\mathbb{A}}\left(r_P\begin{pmatrix} q & s\tilde{q} \\ 0 & \tilde{q}\end{pmatrix}\right) = |q_{\infty}|^{[k]}\|q_{\infty}\|^{\kappa-[\kappa]}\sum_{\tau\in S_+}c_f(\tau, q)e_{\infty}(\tr(\mathbf{i}q^T\tau q))e_{\mathbb{A}}(\tr(\tau s)),
\]
for some $c_f(\tau, q) = c(\tau, q; f)\in\mathbb{C}$ satisfying the following properties
\begin{align}
	&c_f(\tau, q)\neq 0\ \text{only if $e_{\mathbb{A}}(\tr(q^T\tau qs))=1$ for all $s\in S_{\mathbf{f}}(\mfrak{b}^{-1})$}, \label{c1} \\
	&c_f(b^T\tau b, q) = |b|^{[\kappa]}\|b\|^{\kappa-[\kappa]}c_f(\tau, bq)\ \text{for any $b\in GL_n(\mathbb{Q})$}, \label{c3} \\
	&\psi_{\mathbf{f}}(|a|)c_f(\tau, qa) = c_f(\tau, q)\ \text{for any $\diag[a, \tilde{a}]\in D[\mfrak{b}^{-1}, \mfrak{bc}]$}.  \label{c4}\hspace{60pt}
\end{align}

The proof of the above expansion and properties can be found in Proposition 1.1 of \cite{Shimurahalf}. The coefficients $c_f(\tau, 1)$ are the traditional Fourier coefficients of $f$ in the following sense. by property (\ref{c1}), the modular form $f\in\mathcal{M}_{\kappa}(\Gamma, \psi)$ has Fourier expansion
\[
	f(z) = \sum_{\tau\in S_+}c_f(\tau, 1)e(\tr(\tau z)),
\]
where $c_f(\tau, 1)\neq 0$ only if $\tau\in N(\mfrak{b})S_+^{\triangledown}$. If $F\in C_{\kappa}^{\infty}(\Gamma, \psi)$, then it has Fourier expansion of the form
\[
	F(z) = \sum_{\tau\in S} c_F(\tau, y) e(\tr(\tau x)),
\]
where $z = x+iy$ and the coefficients $c_F(\tau, y)$ are smooth functions of $y$ having values in $\mathbb{C}$. We have $c_F(\tau, y)$ is identically zero unless $\tau\in N(\mfrak{b})S^{\triangledown}$.

We finish this section with some final key definitions. Consider $\mfrak{b}$ fixed in the definitions of $\Gamma = \Gamma[\mfrak{b}^{-1}, \mfrak{bc}]$, so that this group depends only on $\mfrak{c}$, and let $\psi$ be a normalised Hecke character satisfying $(\ref{PC1})$ and $(\ref{PC2})$. For any two $f, g\in C_{\kappa}^{\infty}(\Gamma, \psi)$, the Petersson inner product is defined by
\[
	\langle f, g\rangle_{\mfrak{c}} = \langle f, g\rangle := \Vol(\Gamma\back\mathbb{H}_n)^{-1}\int_{\Gamma\back\mathbb{H}_n}f(z)\overline{g(z)}\Delta(z)^{\kappa}d^{\times}z
\]
in which
\[
	\Delta(z) := \det(\Im(z)), \hspace{15pt} d^{\times}z := \Delta(z)^{-n-1}\bigwedge_{i\leq j}(dx_{ij}\wedge dy_{ij}),
\]
for any $z = (x_{ij}+iy_{ij})_{i, j=1}^n\in\mathbb{H}_n$. This integral is convergent whenever one of $f, g$ belongs to $\mathcal{S}_{\kappa}(\Gamma, \psi)$.

\section{Complex $L$-function}
\label{Lfunsection}

\noindent The standard complex $L$-function associated to eigenforms is defined in this section, and the known Rankin-Selberg integral expression is stated. As in the previous section, take ideals $\mfrak{b}$ and $\mfrak{c}$ satisfying (\ref{BC1}) and (\ref{BC2}), and set $\Gamma = \Gamma[\mfrak{b}^{-1}, \mfrak{bc}]$.

For any Hecke character $\chi$ of $\mathbb{Q}$ let $\chi^*(p) = \chi^*(p\mathbb{Z})$ denote the associated ideal character. Though the integral expression can be stated for any half-integral weight $k$, we take $k\geq n+1$ to ease up on notation -- we shall be making this assumption later on anyway. For a prime $p$, the association of the Satake $p$-parameters -- an $n$-tuple $(\lambda_{p, 1}, \dots, \lambda_{p, n})\in\mathbb{C}^n$ -- to a non-zero Hecke eigenform $f\in\mathcal{S}_k(\Gamma, \psi)$ is well-known (see, for example, \cite[p. 46]{Shimurahalf}). Now set a Hecke character $\chi$ of conductor $\mfrak{f}$. The standard $L$-function of $f$, twisted by $\chi$, is then defined by
\begin{align*}
	L_p(t) :&= \begin{cases} \displaystyle\prod_{i=1}^n (1-p^n\lambda_{p, i}t) &\text{if $p\mid\mfrak{c}$}, \\ 
		\displaystyle\prod_{i=1}^n (1-p^n\lambda_{p, i}t)(1-p^n\lambda_{p, i}^{-1}t) &\text{if $p\nmid\mfrak{c}$}; \end{cases} \\
	L_{\psi}(s, f, \chi) :&= \prod_p L_p\left((\psi^{\mfrak{c}}\chi^*)(p)p^{-s}\right)^{-1},
\end{align*}
in which
\[
	\psi^{\mfrak{c}}(x) := \left(\frac{\psi}{\psi_{\mfrak{c}}}\right)(x).
\]
The Rankin-Selberg integral expression, (4.1) in \cite[p. 342]{Shimuraexp}, is given there in generality; we restate it now for our purposes. Fix $\tau\in N(\mfrak{b})S_+^{\triangledown}$ such that $c_f(\tau, 1)\neq 0$ and let $\rho_{\tau}$ be the quadratic character associated to the extension $\mathbb{Q}(i^{[n/2]}\sqrt{|2\tau|})$; choose $\mu\in\{0, 1\}$ such that $(\psi\chi)_{\infty}(x) = \sgn(x_{\infty})^{[k]+\mu}$. 

The key ingredients of the integral are three modular forms: the eigenform $f$, a theta series $\theta_{\chi}$, and a normalised non-holomorphic Eisenstein series $\mathcal{E}(z, s)$. To define the theta series, take any $0<\tau\in S$ and define an integral ideal $\mfrak{t}$ by the relation $h^T(2\tau)^{-1}h\in 4\mfrak{t}^{-1}$ for all $h\in \mathbb{Z}^n$. Take $\mu\in\{0, 1\}$ and a Hecke character $\chi$ such that $\chi_{\infty}(x)^n = \sgn(x_{\infty})^{n\mu}$. The theta series is then the sum
\begin{align}
	\theta_{\chi}^{(\mu)}(z; \tau) = \theta_{\chi}(z) := \sum_{x\in M_n(\mathbb{Z})}(\chi_{\infty}\chi^*)^{-1}(|x|)|x|^{\mu}e(\tr(x^T\tau xz)), \label{theta}
\end{align}
where we understand $(\chi_{\infty}\chi^*)(0) = 1$ if $f = \mathbb{Z}$ and as zero otherwise.
This has weight $\frac{n}{2}+\mu$, level $\Gamma[2, 2\mfrak{tf}^2]$ determined by Proposition 2.1 of \cite{Shimuraexp}, character $\rho_{\tau}\chi^{-1}$, and coefficients in $\mathbb{Q}(\chi)$.

The Eisenstein series of weight $\kappa\in\frac{1}{2}\mathbb{Z}$ is now defined in a little more generality. Let $\Gamma' = \Gamma[\mfrak{x}^{-1}, \mfrak{xy}]$ be a congruence subgroup, contained in $\mfrak{M}$ if $\kappa\notin\mathbb{Z}$, and let $\ph$ be a Hecke character satisfying (\ref{PC1}) with $\mfrak{y}$ in place of $\mfrak{c}$, and also such that $\ph_{\infty}(x) = \sgn(x_{\infty})^{[\kappa]}$ (note that this is a more stringent condition than the usual (\ref{PC2})). The Eisenstein series is defined by
\begin{align*}
	E(z, s; \kappa, \ph, \Gamma') := \sum_{\alpha\in P\cap\Gamma'\back\Gamma'}\ph_{\mfrak{y}}(|a_{\gamma}|)(\Delta^{s-\frac{\kappa}{2}}||_{\kappa}\alpha)(z), 
\end{align*}
where recall $\Delta(z) = \det(\Im(z))$, and we have $z\in\mathbb{H}_n$, $s\in\mathbb{C}$. This sum is convergent for $\Re(s)>\frac{n+1}{2}$ and can be continued meromorphically to all of $s\in\mathbb{C}$ by a functional equation with respect to $s\mapsto \frac{n+1}{2}-s$. This series belongs to $C_{\kappa}^{\infty}(\Gamma, \ph^{-1})$ and is normalised by a product of Dirichlet $L$-functions as follows. Let $\mfrak{a}$ be any integral ideal and define
\begin{align*}
	\begin{split}
		L_{\mfrak{a}}(s, \ph) :&= \prod_{p\nmid\mfrak{a}}(1-\ph^*(p)p^{-s})^{-1}; \\
		\Lambda_{\mfrak{a}}^{n, \kappa}(s, \ph) :&= \begin{cases}
			L_{\mfrak{a}}(2s, \ph)\displaystyle\prod_{i=1}^{[n/2]}L_{\mfrak{a}}(4s-2i, \ph^2) &\text{if $\kappa\in\mathbb{Z}$}, \\
			\displaystyle\prod_{i=1}^{[(n+1)/2]}L_{\mfrak{a}}(4s-2i+1, \ph^2) &\text{if $\kappa\notin\mathbb{Z}$}.
			\end{cases}
	\end{split}
\end{align*}
The normalised Eisenstein series is given by
\begin{align*}
	\mathcal{E}(z, s; \kappa, \ph, \Gamma') := \overline{\Lambda_{\mfrak{y}}^{n, \kappa}(s, \bar{\ph})}E(z, \bar{s}; \kappa, \ph, \Gamma').
\end{align*}

Set $\eta := \psi\bar{\chi}\rho_{\tau}$. In this setting, the integral expression of \cite[(4.1)]{Shimuraexp} becomes:
\begin{align}
	\begin{split}
		L_{\psi}(s, f, \bar{\chi}) &= \left[\Gamma_n\left(\tfrac{s-n-1+k+\mu}{2}\right)2c_f(\tau, 1)\right]^{-1}N(\mfrak{b})^{\frac{n(n+1)}{2}}|4\pi\tau|^{\frac{s-n-1+k+\mu}{2}} \\
	&\times \left(\tfrac{\Lambda_{\mfrak{c}}}{\Lambda_{\mfrak{y}}}\right)\left(\tfrac{2s-n}{4}\right)\prod_{q\in\mathbf{b}}g_q\left((\psi^{\mfrak{c}}\bar{\chi}^*)(q)q^{-s}\right)\langle f, \theta_{\bar{\chi}}\mathcal{E}(\cdot, \tfrac{2s-n}{4})\rangle_{\mfrak{y}} V,
	\end{split}\label{RS}
\end{align}
in which $\Lambda_{\mfrak{a}}(s) := \Lambda_{\mfrak{a}}^{n, k-\frac{n}{2}-\mu}(s, \eta)$; $\mfrak{y}:= \mfrak{c}\cap(2\mfrak{tf}^2)$; $\mathbf{b}$ is the finite set of primes $q\nmid\mfrak{c}$ such that $\ord_q(|\tau|)\neq 0$ and $g_q\in\mathbb{Z}[t]$ are certain polynomials satisfying $g_q(0) = 1$; 
\[
	\mathcal{E}(z, s) := \mathcal{E}\left(z, s; k-\tfrac{n}{2}-\mu, \bar{\eta}, \Gamma[\mfrak{b}^{-1}, \mfrak{by}]\right);
\]
and $V := \Vol(\Gamma[\mfrak{b}^{-1}, \mfrak{by}]\back\mathbb{H}_n)$.

\section{$p$-stabilisation}
\label{pstabsection}

\noindent Fixing a prime $p$, the initial key ingredient in our construction of the $p$-adic $L$-function is the replacement of an eigenform $f$ with its so-called $p$-stabilisation $f_0$. The form $f_0$ is also an eigenform away from $p$, whose eigenvalues there coincide with $f$, however it has the key property that $p$ divides the level of $f_0$ and is an eigenform for the operator $U_p$ -- the Atkin-Lehner operator that shifts Fourier coefficients. Thus the $L$-functions of $f$ and $f_0$ are easily relatable and so for full generality we can begin with an eigenform $f$, assume that $p\nmid\mfrak{c}$ does not divide the level, and then pass to $f_0$. In \cite{Mercuri} we constructed $f_0$ explicitly in the case $n=1$ which was possible through explicit formulae on the action of the Hecke operators involved on the Fourier coefficients. For general $n$ we modify the method of \cite{Panchishkin}, which involves abstract Hecke rings, the Satake isomorphism, and certain Hecke polynomials; at the end of this section however, we show how all this abstract Hecke yoga reduces to the explicit form found in \cite{Mercuri}, when $n=1$.

Let $k$ be a half-integral weight, $(\mfrak{b}^{-1}, \mfrak{bc})\subseteq 2\mathbb{Z}\times 2\mathbb{Z}$ be ideals, and $\psi$ be a Hecke character satisfying (\ref{PC1}) and (\ref{PC2}); put $\Gamma = \Gamma[\mfrak{b}^{-1}, \mfrak{bc}]$ and $D = D[\mfrak{b}^{-1}, \mfrak{bc}]$. Then define
\begin{align*}
	\So_p :&= GL_n(\mathbb{Z}_p),   &&\So := \prod_p GL_n(\mathbb{Z}_p), \\
	\Sx_p :&= M_n(\mathbb{Z}_p)\cap GL_n(\mathbb{Q}_p), &&\Sx := GL_n(\mathbb{Q})_{\mathbf{f}}\cap\prod_p \Sx_p, \\
	Z_0 :&= \{\diag[\tilde{q}, q]\mid q\in \Sx\}, &&Z:= D[2, 2]Z_0D[2, 2].
\end{align*}

If $(\Delta, \Xi)$ is a Hecke pair, in the sense of \cite[pp. 77 -- 78]{Andrianov}, then the abstract Hecke ring $\mathcal{R}(\Delta, \Xi)$ denotes the ring of formal finite sums $\sum_{\xi}c_{\xi}\Delta\xi\Delta$ where $c_{\xi}\in\mathbb{C}$ and $\xi\in \Xi$. Each double coset has a finite decomposition into single right cosets, and the law of multiplication is given in \cite[pp. 78 -- 79]{Andrianov}. Consider the Hecke ring $\mathcal{R}(V, W)$ defined in \cite[p. 39]{Shimurahalf} and let $\mathcal{R}$ denote the factor ring of $\mathcal{R}(V, W)$ defined in \cite[p. 41]{Shimurahalf} or analogously to (\ref{factorring}) below -- this is the adelic Hecke ring which acts on forms in $\mathcal{M}_k(\Gamma, \psi)$, and it is factored in order to give the Satake isomorphism. We need the use of a slightly different Hecke ring and we define this more explicitly. Let $D_0 := D\cap P_{\mathbb{A}}$ and $\Gamma_0 := \Gamma\cap P$; define
\begin{align*}
	Y_0 :&= \left\{\diag[\tilde{r}, r]\bigg| r\in\prod_p GL_n(\mathbb{Q}_p)\right\}, 
\end{align*}
and 
\begin{align*}
	W_0 :&= \{(\alpha, t)\mid t\in \mathbb{T}, \pr(\alpha)\in G_{\mathbf{f}}\cap D_0Y_0D_0\} \\
	V_0 :&= \{(\alpha, 1)\mid \pr(\alpha)\in G_{\mathbf{f}}\cap D_0\}.
\end{align*}
Now define the Hecke ring $\mathcal{S} := \mathcal{R}(V_0, W_0)$, which differs from $\mathcal{R}(V, W)$ of \cite{Shimurahalf} in allowing denominators of $p$ into the matrices $r$ defining $Y_0$ (contrast with the definition of $Z_0$), and is therefore analogous to the Hecke ring $L_0$ of \cite[pp. 81 -- 82]{Andrianov}. By Lemma 1.1.3 of \cite{Andrianov} there exists a $\mathbb{C}$-linear embedding $\eps : \mathcal{R}(V, W)\to\mathcal{S}$ defined on single cosets as $\eps(Dg) = D_0g$. The law of multiplication in $W_0$, and subsequent actions of $\mathcal{S}$ on $f_{\mathbb{A}}$ and $f\in\mathcal{M}_k(\Gamma, \psi)$, are defined in the same way as \cite[pp. 39 -- 41]{Shimurahalf}, and thus the factor ring
\begin{align}
	\mathcal{S}_0 := \mathcal{S}/\langle V_0(\alpha, 1)V_0-tV_0(\alpha, t)V_0\mid (\alpha, t)\in W_0\rangle \label{factorring}
\end{align}
also has a well-defined action on $f\in\mathcal{M}_k(\Gamma, \psi)$ and $f_{\mathbb{A}}$. The action of the double coset $D\diag[\tilde{q}, q]D\in\mathcal{R}(V, W)$ on $\mathcal{M}_k(\Gamma, \psi)$, for example, is given by first decomposing into single cosets
\[
	G\cap(D\diag[\tilde{q}, q]D) = \bigsqcup_{\alpha}\Gamma\alpha,
\]
where $\alpha\in G\cap D\diag[\tilde{q}, q]D$ and then summing over the actions of $\alpha$ on $f$ by the slash operator involving an extended factor of automorphy $J^k(\alpha, z)$ -- see Sections 2, 3, and 4 of \cite{Shimurahalf} for the details here.

	Let $A_q\in\mathcal{R}$ denote the image under projection of $V(\diag[\tilde{q}, q], 1)V\in\mathcal{R}(V, W)$, for $q\in\Sx$, and let $A_r\in\mathcal{S}_0$ denote the image under projection of $V_0(\diag[\tilde{r}, r], 1)V_0\in\mathcal{S}$, for $r\in\prod_p GL_n(\mathbb{Q}_p\cap\mathbb{Q})$. Then the local rings $\mathcal{R}_p$ and $\mathcal{S}_{0p}$ are the spaces generated by $A_q$ and $A_r$ respectively, where now $q\in\Sx_p$ and $r\in GL_n(\mathbb{Q}_p\cap\mathbb{Q})$. 

Assume $p\nmid\mfrak{c}$. Let  $W_n$ be the Weyl group of transformations generated by the transformations $x_i\mapsto x_i^{-1}; x_j\mapsto x_j$ for $j\neq i$, and let $\mathbb{C}[x_1^{\pm}, \dots, x_n^{\pm}]^{W_n}$ denote the ring of Weyl-invariant complex polynomials. The Satake map $\omega_p:\mathcal{R}_{p}\to\mathbb{C}[x_1^{\pm}, \dots, x_n^{\pm}]^{W_n}$ is defined in \cite[pp. 41 -- 42]{Shimurahalf} through the composition of two maps
\begin{align*}
	\omega_p:\mathcal{R}_{p}\xrightarrow{\Phi_p} \mathcal{R}(\So_p, GL_n(\mathbb{Q}_p))\xrightarrow{\omega_{0p}}\mathbb{C}[x_1^{\pm}, \dots, x_n^{\pm}]^{W_n},
\end{align*}
which we now give. By \cite[Lemma 1.2.2]{Andrianov} this is an isomorphism.

\paragraph{The map $\Phi_p$.}
If $\sigma = \diag[\tilde{q}, q]$ with $q\in\Sx_p$ then by Lemma 2.1 of \cite{Shimuraint} we have the decomposition
\begin{align}
	D_p\sigma D_{p} =\bigsqcup_{x\in X}\bigsqcup_{s\in Y_x}\bigsqcup_{d\in R_x}D_p\alpha_{d, s}, \hspace{20pt}\alpha_{d, s} = \begin{pmatrix} \tilde{d} & sd \\ 0 & d\end{pmatrix}, \label{satakecoset}
\end{align}
where $X\subseteq GL_n(\mathbb{Q}_p)$, $R_x\subset x\So_p$ represents $\So_p\back\So_p x\So_p$, and $Y_x\subseteq S_p$. To define $\Phi_p$ we extend, by $\mathbb{C}$-linearity, the map
\[
	\Phi_p(D_p\sigma D_p) := \sum_{d, s}J(r_P(\alpha_{d, s}))^{-1}\So_pd,
\]
where $J(\alpha) := J^{\frac{1}{2}}(\alpha, i)$ for $\alpha\in \pr^{-1}(D[2, 2]Z_0D[2, 2])$ (see \cite[(2.7) and Lemma 2.4]{Shimurahalf} for the precise definition and characterisation of $J(\alpha)$). By \cite[Lemma 4.3]{Shimurahalf} the map $\Phi_p$ is injective.

\paragraph{The map $\omega_{0p}$.} Note that any coset $\So_pd$ with $d\in GL_n(\mathbb{Q}_p)$ contains an upper triangular matrix of the form
\begin{align*}
	\begin{pmatrix} 
	p^{a_{d_1}} & \star & \cdots & \star \\ 
	0 & p^{a_{d_2}} & \cdots & \star \\ \vdots & \vdots & \ddots & \vdots \\ 
	0 & 0 & \cdots & p^{a_{d_n}}
	\end{pmatrix},  
\end{align*}
with $a_{d_i}\in\mathbb{Z}$, and then define
\[
	\omega_{0p}(\So_pd) := \prod_{i=1}^n (p^{-i}x_i)^{a_{d_i}}.
\]
Through the decomposition $\So_p x\So_p = \bigsqcup_d \So_pd$ and $\mathbb{C}$-linearity, we extend this to obtain $\omega_{0p}$.
\par\bigskip
By multiplying out elements of $\diag[\tilde{r}, r](D_0)_p$, for $r\in GL_n(\mathbb{Q}_p\cap\mathbb{Q})$, we see that $(D_0)_p\diag[\tilde{r}, r](D_0)_p$ also has a single coset decomposition of the form (\ref{satakecoset}). Thus we can analogously define $\Phi_p':\mathcal{S}_{0p}\to GL_n(\So_p, GL_n(\mathbb{Q}_p))$ and 
\[
	\omega_p':=\omega_{0p}\circ\Phi_p':\mathcal{S}_{0p}\to \mathbb{C}[x_1^{\pm}, \dots, x_n^{\pm}].
\]
The map $\Phi_p'$, and therefore $\omega_p'$, is no longer necessarily injective. There is a local embedding $\eps_{0p}:\mathcal{R}_p\to\mathcal{S}_{0p}$ and we have $\omega_p = \omega_p'\circ\eps_{0p}$. There exists $U_p\in\mathcal{S}_{0p}$ -- called the \emph{Frobenius} element -- defined by
\[
	U_p = \Gamma_0\begin{pmatrix} p^{-1}I_n & 0 \\ 0 & pI_n\end{pmatrix} \Gamma_0 =  \bigsqcup_{u\in S(\mfrak{b}^{-1}/p^2\mfrak{b}^{-1})}\Gamma_0\begin{pmatrix} p^{-1}I_n & p^{-1}u \\ 0 & pI_n\end{pmatrix}.
\]
If $n=1$, it is well known that $U_p$ corresponds to the $p$th Hecke operator when $p\mid\mfrak{c}$; for general $n>1$, this is no longer true. Note that $\omega_p'(U_p) = p^{\frac{n(n+1)}{2}}x_1\cdots x_n$. 

Let $\mathcal{C} := \{A\in \mathcal{S}_{0p}\mid U_pA = AU_p\}$ denote the centraliser of $U_p$ in $\mathcal{S}_{0p}$. The map $\Phi_p'$ is injective when restricted to $\mathcal{C}$ by the following argument. 

\begin{proposition}\label{centraliserform} Any $A\in\mathcal{C}$ is a linear combination of double cosets
\[
	(D_0)_p\begin{pmatrix} \tilde{r} & 0 \\ 0 & r\end{pmatrix}(D_0)_p,
\]
where $r\in M_n(\mathbb{Z}_p)\cap GL_n(\mathbb{Q}_p)$. 
\end{proposition}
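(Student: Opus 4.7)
The key idea is to exploit the basis structure of $\mathcal{S}_{0p}$. By the $p$-adic elementary divisor theorem, $\mathcal{S}_{0p}$ admits a $\mathbb{C}$-basis $\{T_{\mathbf a}\}$ indexed by sequences $\mathbf a = (a_1\leq\cdots\leq a_n)\in\mathbb{Z}^n$, where $T_{\mathbf a} := (D_0)_p\diag[\tilde r,r](D_0)_p$ for any $r$ with $p$-adic elementary divisors $(p^{a_1},\dots,p^{a_n})$. The condition $r\in M_n(\mathbb{Z}_p)\cap GL_n(\mathbb{Q}_p)$ is exactly $a_1\geq 0$, and the Frobenius is $U_p = T_{(1,\dots,1)}$. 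Writing any $A\in\mathcal{C}$ in this basis as $A = \sum_{\mathbf a}c_{\mathbf a}T_{\mathbf a}$, the plan is to show that commutation $AU_p = U_pA$ forces $c_{\mathbf a}=0$ whenever $a_1<0$.

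The main step is a direct Hecke product calculation. Using the single-coset decomposition (\ref{satakecoset}) of $T_{\mathbf a}$ together with the explicit decomposition of $U_p$ into representatives indexed by $u\in S(\mfrak b^{-1}/p^2\mfrak b^{-1})$, I would compute both $T_{\mathbf a}\cdot U_p$ and $U_p\cdot T_{\mathbf a}$ by multiplying the upper-triangular representatives and re-grouping into double cosets. Since $pI_n$ is central in $GL_n(\mathbb{Q}_p)$, the diagonal $r$-components of both products coincide and have elementary divisor exponents $\mathbf a+\mathbf 1$. The entire discrepancy therefore lies in the off-diagonal $s$-block, where the $p^{-1}u$-block of $U_p$ enters with different $\tilde r$-conjugations and $p$-rescalings on the two sides.

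The crucial observation is the asymmetry in how the resulting $s$-blocks can be absorbed into $(D_0)_p$. When $a_1\geq 0$, the extra $p$-factors lie inside the level subgroup and the two products agree, so $T_{\mathbf a}$ and $U_p$ automatically commute. When $a_1<0$, however, the factor $p^{a_1}$ in $\tilde r$ produces $s$-contributions of negative $p$-valuation that cannot be absorbed, and the discrepancy $T_{\mathbf a}U_p - U_pT_{\mathbf a}$ acquires a nonzero component on some $T_{\mathbf b}$ with $b_1\leq 0$. Ordering the basis by minimal elementary divisor and using the linear independence of distinct $T_{\mathbf b}$, a descending induction then extracts $c_{\mathbf a}=0$ for every $\mathbf a$ with $a_1<0$, which is the desired conclusion.

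The principal obstacle is the combinatorial bookkeeping of how the off-diagonal blocks reduce modulo $(D_0)_p$ on each side. The $p$-integrality conditions on the $u$-parameter and on the representatives $s\in Y_x$ of~(\ref{satakecoset}) must be tracked carefully, and one must verify that no unexpected cancellation occurs when $a_1<0$. The analogous calculation for the purely symplectic parabolic Hecke algebra appears in Andrianov's work; the modifications required here for the metaplectic cover and the half-integral-weight factor $J$ should be mild, since $U_p$ has trivial $h$-factor, but making this explicit is where the real content of the argument lies.
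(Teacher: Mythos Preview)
Your approach is workable in outline but takes a genuinely harder route than the paper, and one of your intermediate claims is not justified by the reason you give.

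The paper avoids the direct computation with $U_p$ entirely. It introduces $U_p^- := (D_0)_p\,\diag[pI_n,p^{-1}I_n]\,(D_0)_p$ and the anti-involution $\iota$ on $\mathcal{S}_{0p}$ defined by $A_r^{\iota} := A_{r^{-1}}$, which satisfies $U_p^{\iota}=U_p^-$. Applying $\iota$ to the hypothesis $U_pA=AU_p$ yields $U_p^-A^{\iota}=A^{\iota}U_p^-$, so it suffices to show that anything commuting with $U_p^-$ is supported on $A_{r'}$ with $r'^{-1}\in M_n(\mathbb{Z}_p)$; inverting back gives $r\in M_n(\mathbb{Z}_p)$. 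The point of this detour is that $\diag[pI_n,p^{-1}I_n]$ conjugates $(D_0)_p$ \emph{into} itself, so $U_p^-$ is a \emph{single} right coset, and the products $U_p^-A_r$ and $A_rU_p^-$ are computed by a single matrix multiplication rather than a sum over $p^{n(n+1)}$ cosets. This is exactly the simplification that makes ``multiply out the cosets'' a one-line check rather than the combinatorial bookkeeping you identify as the principal obstacle.

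Two specific issues with your plan. First, your assertion that $T_{\mathbf a}$ with $a_1\geq 0$ automatically commutes with $U_p$ because ``the extra $p$-factors lie inside the level subgroup'' is not a proof: the two products $T_{\mathbf a}U_p$ and $U_pT_{\mathbf a}$ have off-diagonal blocks $p^{-1}\tilde d\,u+psd$ and $p^{-1}(s+u)d$ respectively, and matching these coset-by-coset is not immediate from integrality of $r$ alone. The statement is true, but the clean reason is that $\Phi_p'$ is injective on the span of integral $A_r$ (this is the computation in the proof of the next proposition) and lands in the commutative $GL_n$ Hecke algebra; you cannot invoke that here without circularity. Second, your descending induction needs more than ``$T_{\mathbf a}U_p-U_pT_{\mathbf a}$ acquires a nonzero component'': you must show that the contributions to that component from the terms with minimal $a_1$ cannot be cancelled by contributions from terms with larger $a_1$, which requires an explicit triangularity statement you have not formulated. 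Both gaps are fillable, but the involution trick sidesteps them entirely.
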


\begin{proof} This is essentially the second statement of Proposition 2.1.1 of \cite{Andrianov} with $\delta = 0$ (in the notation of Andrianov). To prove it, define $U_p^- := \Gamma_0\begin{psmallmatrix} pI_n & 0 \\ 0 & p^{-1}I_n\end{psmallmatrix}\Gamma_0$ then multiply out the cosets of both sides of the relation $U_p^-A_r = A_rU_p^-$ to see that $r$ must have entries in $\mathbb{Z}_p^{\times}[p^{-1}]$. Define the involution $A_r^{\iota} := A_{r^{-1}}$, which satisfies $U_p^{\iota} = U_p^-$ and apply it to the condition $U_pA_r = A_rU_p$ to obtain the proposition.
\end{proof}

\begin{proposition}\label{Phipinj} The map $\Phi_p'$ is injective when restricted to $\mathcal{C}$.
\end{proposition}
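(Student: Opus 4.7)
My plan is to reduce the injectivity of $\Phi_p'$ on $\mathcal{C}$ to the already-known injectivity of $\Phi_p$ on $\mathcal{R}_p$ (cited from \cite[Lemma 4.3]{Shimurahalf}), by exploiting Proposition \ref{centraliserform}.

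First I would observe that the double cosets appearing in Proposition \ref{centraliserform} are indexed precisely by $r\in M_n(\mathbb{Z}_p)\cap GL_n(\mathbb{Q}_p)=\Sx_p$, i.e.\ by elements that are already admissible in the definition of $\mathcal{R}_p$. Since $\mathcal{R}_p$ is spanned by the $A_q$ with $q\in\Sx_p$ and the embedding $\eps_{0p}:\mathcal{R}_p\to\mathcal{S}_{0p}$ sends $A_q\mapsto A_q$, Proposition \ref{centraliserform} says exactly that
\[
	\mathcal{C}\subseteq \eps_{0p}(\mathcal{R}_p).
\]
So every $A\in\mathcal{C}$ can be written as $A=\eps_{0p}(B)$ for some (unique, by injectivity of $\eps_{0p}$) $B\in\mathcal{R}_p$.

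Next I would verify the compatibility $\Phi_p=\Phi_p'\circ\eps_{0p}$. Both $\Phi_p$ and $\Phi_p'$ are defined by the single-coset decomposition (\ref{satakecoset}): for $\sigma=\diag[\tilde q,q]$ with $q\in\Sx_p$, multiplying by elements of $(D_0)_p$ yields the same upper-triangular representatives $\alpha_{d,s}$ as multiplying by elements of $D_p$, so the sums $\sum_{d,s}J(r_P(\alpha_{d,s}))^{-1}\So_p d$ computing $\Phi_p(D_p\sigma D_p)$ and $\Phi_p'((D_0)_p\sigma(D_0)_p)$ coincide. Extending by $\mathbb{C}$-linearity gives the identity $\Phi_p=\Phi_p'\circ\eps_{0p}$ on all of $\mathcal{R}_p$.

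The conclusion is then immediate: if $A\in\mathcal{C}$ satisfies $\Phi_p'(A)=0$, pick $B\in\mathcal{R}_p$ with $\eps_{0p}(B)=A$; then
\[
	\Phi_p(B)=\Phi_p'(\eps_{0p}(B))=\Phi_p'(A)=0,
\]
so $B=0$ by injectivity of $\Phi_p$, hence $A=\eps_{0p}(B)=0$.

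I do not expect any real obstacle here, since the heavy lifting has already been done by Proposition \ref{centraliserform} (which constrains the shape of elements in $\mathcal{C}$) and by Shimura's injectivity statement for $\Phi_p$. The only thing that requires care is the factorisation $\Phi_p=\Phi_p'\circ\eps_{0p}$, which is essentially a tautology from the definitions but needs to be spelled out so that the $J(r_P(\alpha_{d,s}))^{-1}$ weights match on both sides; this is the one step where one has to be attentive to the fact that $\eps_{0p}$ was defined on single cosets by $Dg\mapsto D_0g$ and then extended, rather than directly on double cosets.
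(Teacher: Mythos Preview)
Your argument has a genuine gap at the step ``$\eps_{0p}(A_q)=A_q$''. The embedding $\eps_{0p}$ is defined on \emph{single} cosets by $D_p g\mapsto (D_0)_p g$ and then extended linearly; on a double coset $D_p\sigma D_p$ (with $\sigma=\diag[\tilde q,q]$) it therefore produces $\sum_{d,s}(D_0)_p\alpha_{d,s}$, summed over the representatives in~(\ref{satakecoset}). Since $p\nmid\mfrak{c}$, the group $D_p$ is essentially $Sp_n(\mathbb{Z}_p)$ and in particular contains the Weyl group, so the index set $X$ in~(\ref{satakecoset}) is typically larger than the single $\So_p$-double coset of $q$. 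Already for $n=1$ the set $D_p\diag[p^{-1},p]D_p$ contains $\diag[p,p^{-1}]$, which lies in a different $(D_0)_p$-double coset from $\diag[p^{-1},p]$ because $(D_0)_p$ is block upper-triangular. Hence $\eps_{0p}(A_q)$ is a nontrivial linear combination of several $A_r\in\mathcal{S}_{0p}$, including some with $r\notin\Sx_p$, and Proposition~\ref{centraliserform} does \emph{not} yield $\mathcal{C}\subseteq\eps_{0p}(\mathcal{R}_p)$. (Heuristically, $\eps_{0p}(\mathcal{R}_p)$ consists of Weyl-symmetrised combinations, whereas the generators $A_r$, $r\in\Sx_p$, of $\mathcal{C}$ are not individually Weyl-invariant.) Your identity $\Phi_p=\Phi_p'\circ\eps_{0p}$ is fine, but without the containment $\mathcal{C}\subseteq\eps_{0p}(\mathcal{R}_p)$ the reduction to Shimura's injectivity of $\Phi_p$ does not go through.

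The paper's proof sidesteps $\eps_{0p}$ entirely. Using Proposition~\ref{centraliserform} only to ensure $r\in\Sx_p$, it writes down the single-coset decomposition of $(D_0)_p\diag[\tilde r,r](D_0)_p$ directly (the $d$ range over $\So_p\backslash\So_p r\So_p$ and $s$ over $S(\mfrak{b}^{-1})_p/d^TS(\mfrak{b}^{-1})_p d$, analogous to the $p\mid\mfrak{c}$ case of \cite[Lemma~2.3]{Shimurahalf}). For integral $r$ one has $J(r_P(\alpha_{d,s}))=1$ by \cite[Lemma~2.4]{Shimurahalf}, so the sum collapses and $\Phi_p'(A_r)=|\det r|_{\mathbb{A}}^{-n-1}\,\So_p r\So_p$. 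Injectivity then follows because distinct $\So_p$-double cosets are linearly independent in $\mathcal{R}(\So_p,GL_n(\mathbb{Q}_p))$.
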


\begin{proof} By the previous proposition, if $A_r\in\mathcal{C}$ then $r\in M_n(\mathbb{Z}_p)\cap GL_n(\mathbb{Q}_p)$. We therefore have the decomposition
\[
	(D_0)_p\diag[\tilde{r}, r](D_0)_p = \bigsqcup_{d, s}(D_0)_p\alpha_{d, s},
\]
where $\alpha_{d, s}$ is as in (\ref{satakecoset}), ranging over $d\in \So_p\back \So_pr\So_p$ and $s\in S(\mfrak{b}^{-1})_p/d^TS(\mfrak{b}^{-1})_pd$. This is easily seen by multiplying out $\diag[\tilde{r}, r](D_0)_p$ for such $r$ and is analogous to the case $p\mid\mfrak{c}$ in \cite[Lemma 2.3]{Shimurahalf}. Now $J(r_P(\alpha_{d, s}) = 1$ by Lemma 2.4 of \cite{Shimurahalf}, and therefore $\Phi_p'(A_r) = |\det(r)|_{\mathbb{A}}^{-n-1}\So_pr\So_p$, which shows injectivity.
\end{proof}

The definition of the $p$-stabilisation is now achieved through factorisations of a certain Hecke polynomial. This polynomial is an element $\widetilde{R}_n\in\mathbb{C}[x_1^{\pm}, \dots, x_n^{\pm}][z]$ defined by:
\begin{align*}
	\widetilde{R}_n(x_1, \dots, x_n; z) = \widetilde{R}_n(z) :&= \prod_{\delta_i\in\{\pm 1\}}(1-p^{\frac{n(n+1)}{2}}x_1^{\delta_1}\cdots x_n^{\delta_n}z), \\
\end{align*}
It has an immediate decomposition of the form
\[
	\widetilde{R}(z) = \sum_{m=0}^{2^n}(-1)^m\widetilde{T}_mz^m,
\]
where $\widetilde{T}_m = \widetilde{T}_m(x_1, \dots, x_n)\in\mathbb{C}[x_1^{\pm}, \dots, x_n^{\pm}]$. By definition of $\widetilde{R}_n$, the coefficients $\widetilde{T}_m$ are clearly invariant under the group of Weyl transformations so, by the Satake isomorphism, there exists a polynomial
\begin{align}
	R_n(z) = \sum_{m=0}^{2^n}(-1)^mT_mz^m \label{RTfact}
\end{align}
whose coefficients $T_m\in\mathcal{R}_p$ satisfy $\widetilde{T}_m = \omega_p(T_m)$. If $n=1$ and $T_p$ denotes the $p$th Hecke operator then notice, from \cite[(5.4a)]{Shimurahalf}, that $\omega_p(T_1) = \omega_p(T_p)$, and so $T_1 = T_p$ in this case.

The polynomial $R_n(z)$ has a factorisation involving the Frobenius element $U_p$ -- this part is similar to the methods found in \cite[pp. 90--91]{Andrianov} and \cite[pp. 42--50]{Panchishkin}.

\begin{lemma} \label{usum} With $T_m$ and $U_p$ defined as above
\[
	\sum_{m=0}^{2^n}(-1)^mT_mU_p^{2^n-m} = 0.
\]
\end{lemma}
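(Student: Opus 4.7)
The plan is to reduce the identity to a polynomial identity via the Satake-type map $\omega_p'$ and then use the injectivity result of Proposition \ref{Phipinj} to transport it back to $\mathcal{S}_{0p}$. The starting observation is that $\widetilde{R}_n$ has a vanishing factor when evaluated in the right way: writing $u := p^{n(n+1)/2}x_1\cdots x_n = \omega_p'(U_p)$, the factor of $\widetilde{R}_n(u^{-1})$ corresponding to $\delta_1 = \cdots = \delta_n = 1$ is $1 - u\cdot u^{-1} = 0$, so $\widetilde{R}_n(u^{-1}) = 0$. Multiplying through by $u^{2^n}$ yields
\[
\sum_{m=0}^{2^n}(-1)^m\widetilde{T}_m\,u^{2^n-m} = 0
\]
in $\mathbb{C}[x_1^{\pm}, \dots, x_n^{\pm}]^{W_n}$.

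First, I would check that the sum $S := \sum_m (-1)^m T_m U_p^{2^n-m}$, interpreted via the embedding $\varepsilon_{0p}:\mathcal{R}_p\to\mathcal{S}_{0p}$, lies in the centraliser $\mathcal{C}$ of $U_p$. Each $T_m$ is, by construction, a $\mathbb{C}$-linear combination of elements $A_q$ with $q\in\Sx_p = M_n(\mathbb{Z}_p)\cap GL_n(\mathbb{Q}_p)$, so its image under $\varepsilon_{0p}$ is a linear combination of $A_r$ with $r$ having $\mathbb{Z}_p$-entries; this is precisely the shape characterised in Proposition \ref{centraliserform}. To confirm membership in $\mathcal{C}$ I would invoke the commutativity of the local Hecke ring acting on integral matrices — equivalently, the fact that $\varepsilon_{0p}(\mathcal{R}_p)$ and the Frobenius generator $U_p$ together sit inside a commutative subring of $\mathcal{S}_{0p}$, by the standard coset-multiplication argument in Andrianov \cite{Andrianov}. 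Granted this, $T_mU_p = U_pT_m$ and so $S\in\mathcal{C}$.

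Once $S\in\mathcal{C}$, I would apply $\omega_p'$. Using $\omega_p = \omega_p'\circ\varepsilon_{0p}$ and $\omega_p'(U_p^{j}) = u^j$, we obtain
\[
\omega_p'(S) = \sum_{m=0}^{2^n}(-1)^m\widetilde{T}_m u^{2^n-m} = 0
\]
by the polynomial vanishing established above. Proposition \ref{Phipinj} now says $\omega_p'$ is injective on $\mathcal{C}$, so $S = 0$, which is exactly the claim.

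The main technical obstacle is the commutation step $T_mU_p = U_pT_m$ needed to place $S$ inside $\mathcal{C}$: although the ring $\mathcal{R}_p$ itself is known to be commutative via the Satake isomorphism, the statement required here is that its image in $\mathcal{S}_{0p}$ still commutes with the \emph{inverted} generator $U_p$, which is not in the image of $\varepsilon_{0p}$. I expect the cleanest route is to mimic the coset-multiplication argument used in the proof of Proposition \ref{centraliserform} via the involution $A_r\mapsto A_{r^{-1}}$, comparing the decompositions of $D_p\diag[\tilde{q},q]D_p\cdot U_p$ and $U_p\cdot D_p\diag[\tilde{q},q]D_p$ for $q\in\Sx_p$ directly. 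After this commutativity is secured, the rest of the argument is formal polynomial manipulation combined with the injectivity already in hand.
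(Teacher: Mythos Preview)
Your overall architecture matches the paper's: reduce to showing the sum lies in the centraliser $\mathcal{C}$, then kill it via the injectivity of $\omega_p'|_{\mathcal{C}}$ (Proposition~\ref{Phipinj}) together with the factor $(1-uz)$ of $\widetilde{R}_n(z)$ at $u=\omega_p'(U_p)$. That final polynomial step is exactly what the paper does.

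Where you diverge is in getting the sum into $\mathcal{C}$. You aim for the blanket commutation $T_mU_p=U_pT_m$ and correctly flag this as the unresolved obstacle; your appeal to Proposition~\ref{centraliserform} is also in the wrong direction (it says elements of $\mathcal{C}$ have that shape, not conversely). The paper does \emph{not} attempt to prove $[T_m,U_p]=0$. Instead it exploits the palindromic symmetry of the Hecke polynomial: from
\[
\widetilde{R}_n(z)=(p^{\frac{n(n+1)}{2}}z)^{2^n}\widetilde{R}_n\bigl((p^{n(n+1)}z)^{-1}\bigr)
\]
one reads off $T_m=p^{n(n+1)(m-2^{n-1})}T_{2^n-m}$, and it is this relation, fed into the argument of Andrianov's Proposition~2.1.2, that shows each individual product $T_mU_p^{2^n-m}$ lies in $\mathcal{C}$. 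The symmetry pairs the term at $m$ with the term at $2^n-m$, and it is these balanced products that Andrianov's coset calculation handles.

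So the gap in your proposal is a missing idea rather than a wrong strategy: the functional-equation symmetry of the $T_m$ is the device that delivers membership in $\mathcal{C}$, and your sketched direct coset comparison for $T_mU_p$ versus $U_pT_m$ is both harder and not what the paper uses. Once you insert the symmetry step, the remainder of your argument goes through verbatim.
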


\begin{proof} Denote the sum on the left-hand side by $Y$, this belongs to $\mathcal{S}_{0p}$. It is easy to check that
$\widetilde{R}_n(z) = (p^{\frac{n(n+1)}{2}}z)^{2^n}\widetilde{R}_n((p^{n(n+1)}z)^{-1})$ so, immediately from (\ref{RTfact}) we have
\begin{align}
	T_m = p^{n(n+1)(m-2^{n-1})}T_{2^n-m}. \label{Trel}
\end{align}
By (\ref{Trel}) above and the same argument of Proposition 2.1.2 in \cite[pp. 88--89]{Andrianov} we therefore have $T_m U_p^{2^n-m}\in\mathcal{C}$. Since $\Phi_p'$ is injective on $\mathcal{C}$ by Proposition \ref{Phipinj}, we just need to show that $\omega_p'(Y) = 0$. For this, note
\begin{align*}
	\omega_p'(Y) &= \omega_p'(U_p)^{2^n}\sum_{m=0}^{2^n}(-1)^m\widetilde{T}_m\cdot (\omega_p'(U_p)^{-1})^m \\
	&= \omega_p'(U_p)^{2^n}\widetilde{R}_n(\omega_p'(U_p)^{-1})
\end{align*}
which is zero, since $(1-\omega_p'(U_p)z) = (1-p^{\frac{n(n+1)}{2}}x_1\cdots x_nz)$ is a factor of $\widetilde{R}_n(z)$.
\end{proof}

For any $0\leq m\leq 2^n$, define
\[
	V_{m, p} = V_m := \sum_{\ell=0}^m (-1)^{\ell}T_{\ell}U_p^{m-\ell}\in\mathcal{S}_{0p}.
\]

\begin{proposition}\label{Rfact} The Hecke polynomial $R(z)$ can be factorised as
\begin{align}
	R(z) = \left(\sum_{m=0}^{2^n-1}V_mz^m\right)(1-U_pz) \label{Rfacteq}
\end{align}
\end{proposition}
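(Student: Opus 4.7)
The plan is to expand the right-hand side of (\ref{Rfacteq}) and compare coefficients of $z^m$ on both sides, since the identity is a purely formal statement in the polynomial ring $\mathcal{S}_{0p}[z]$ (where $z$ is central). A telescoping argument handles all intermediate coefficients, and Lemma \ref{usum} is exactly what is needed to match the top coefficient.

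First I would write
\[
\left(\sum_{m=0}^{2^n-1}V_m z^m\right)(1-U_pz) = V_0 + \sum_{m=1}^{2^n-1}(V_m - V_{m-1}U_p)z^m - V_{2^n-1}U_p\,z^{2^n},
\]
and inspect each piece separately. The constant term is $V_0 = T_0 = 1 = (-1)^0 T_0$, matching the constant term of $R(z)$. For $1\leq m\leq 2^n-1$, the definition of $V_m$ gives the telescoping identity
\[
V_m - V_{m-1}U_p = \sum_{\ell=0}^m(-1)^\ell T_\ell U_p^{m-\ell} - \sum_{\ell=0}^{m-1}(-1)^\ell T_\ell U_p^{m-\ell} = (-1)^m T_m,
\]
which is exactly the coefficient of $z^m$ in $R(z)$.

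The only nontrivial step is matching the coefficient of $z^{2^n}$, namely showing that $-V_{2^n-1}U_p = T_{2^n}$ (using that $2^n$ is even for $n\geq 1$). Here Lemma \ref{usum} delivers the claim at once: separating the term $m = 2^n$ in that lemma yields
\[
T_{2^n} + \sum_{m=0}^{2^n-1}(-1)^m T_m U_p^{2^n-m} = 0,
\]
i.e.\ $T_{2^n} = -V_{2^n-1}U_p$. Assembling the three coefficient computations completes the proof.

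The main (and essentially only) obstacle has already been overcome in the preceding lemma, whose proof used the symmetry $T_m = p^{n(n+1)(m-2^{n-1})}T_{2^n-m}$ together with the injectivity of $\Phi_p'$ on the centraliser $\mathcal{C}$ (Propositions \ref{centraliserform} and \ref{Phipinj}); once that lemma is in hand, Proposition \ref{Rfact} is a direct bookkeeping exercise in noncommutative polynomial multiplication. One minor point to be mindful of is that $T_\ell$ and $U_p$ need not commute in $\mathcal{S}_{0p}$, so I must keep $U_p$ consistently on the right throughout the expansion; this is precisely the order in which $V_m$ was defined, so no rearrangement is needed.
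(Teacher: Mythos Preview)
Your proof is correct and follows essentially the same approach as the paper's own argument: expand the right-hand side, use $V_0=1$ for the constant term, the telescoping identity $V_m-V_{m-1}U_p=(-1)^mT_m$ for the intermediate coefficients, and Lemma~\ref{usum} to identify $-V_{2^n-1}U_p$ with $T_{2^n}$ at the top. Your remark about keeping $U_p$ on the right is a helpful clarification that the paper leaves implicit.
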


\begin{proof} By definition $V_0 = 1$ and by Lemma \ref{usum} $V_{2^n-1}U_p = -T_{2^n}$. For the rest, $1\leq m\leq 2^n-2$, we have
\[
	V_m-V_{m-1}U_p = \sum_{\ell=0}^m (-1)^{\ell}T_{\ell}U_p^{m-\ell}-\sum_{\ell=0}^{m-1}(-1)^{\ell}T_{\ell}U_p^{m-1-\ell}U_p = (-1)^mT_m.
\]
Expanding the right hand side of (\ref{Rfacteq}) therefore gives the factorisation (\ref{RTfact}), which concludes the proof.
\end{proof}

\begin{definition}\label{pstab} Let $f\in\mathcal{M}_k(\Gamma, \psi)$ be a non-zero Hecke eigenform with Satake $p$-parameters $(\lambda_{p, 1}, \dots, \lambda_{p, n})$, assuming $p\nmid\mfrak{c}$. Set
\[
	\lambda_0 := p^{\frac{n(n+1)}{2}}\lambda_{p, 1}\cdots \lambda_{p, n}.
\]
Then the $p$-stabilisation of $f$ is defined by
\begin{align}
	f_0 := \sum_{m=0}^{2^n-1}\lambda_0^{-m}f|V_{m, p}. \label{pstabform}
\end{align}
\end{definition}

\begin{proposition} \label{pstabprop} If $f\in\mathcal{M}_k(\Gamma[\mfrak{b}^{-1}, \mfrak{bc}], \psi)$ is an eigenform and $p\nmid\mfrak{c}$, then we have that $f_0\in\mathcal{M}_k(\Gamma[\mfrak{b}^{-1}, \mfrak{bc}_0], \psi)$, where $\mfrak{c}_0 = \mfrak{c}p^{2(2^n-1)}$. Moreover,
\[
	f_0|U_p = \lambda_0 f_0.
\]
\end{proposition}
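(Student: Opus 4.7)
The plan is to verify each of the two assertions by direct manipulation of the definition of $f_0$, leveraging the factorisation of Proposition~\ref{Rfact} and the fact that $f$ is a Hecke eigenform. The key algebraic engine is the telescoping identity that already appears in the proof of Proposition~\ref{Rfact}, namely
\[
	V_m U_p = V_{m+1} + (-1)^m T_{m+1} \qquad (0 \leq m \leq 2^n - 1),
\]
together with the boundary fact $V_{2^n} = 0$, which is merely a restatement of Lemma~\ref{usum}.

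For the eigenvalue equation $f_0 | U_p = \lambda_0 f_0$, I would apply $U_p$ termwise to $f_0$, use $f | T_\ell = \mu_\ell f$ with $\mu_\ell := \widetilde{T}_\ell(\lambda_{p, 1}, \dots, \lambda_{p, n})$, and reindex to obtain
\[
	f_0 | U_p = \lambda_0 \left(f_0 - f\right) + f \sum_{m=0}^{2^n - 1} (-1)^m \lambda_0^{-m} \mu_{m+1},
\]
where the disappearance of the end-term is exactly $V_{2^n}=0$. The proof then reduces to verifying $\sum_{\ell=0}^{2^n} (-1)^\ell \lambda_0^{-\ell} \mu_\ell = 0$, which is precisely $\widetilde{R}_n(\lambda_0^{-1}) = 0$ evaluated at the Satake parameters. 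This vanishing is immediate from the product definition of $\widetilde{R}_n$: the factor $\bigl(1 - p^{n(n+1)/2} x_1 \cdots x_n z\bigr)$ corresponding to the all-positive choice of signs $\delta_i = +1$ vanishes after the substitution $(x_i, z) \mapsto (\lambda_{p, i}, \lambda_0^{-1})$, by the very definition of $\lambda_0$.

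For the statement about level, the plan is to track the shift induced by each summand $f | V_{m, p}$. The Hecke operators $T_\ell \in \mathcal{R}_p$ preserve the original level $\Gamma[\mfrak{b}^{-1}, \mfrak{bc}]$ because $p \nmid \mfrak{c}$, while each application of $U_p$ is a parabolic Hecke operator whose coset representatives $\begin{psmallmatrix} p^{-1} I_n & p^{-1} u \\ 0 & p I_n\end{psmallmatrix}$ scale the $\mfrak{bc}$-component of the level by $p^2$; that is, $U_p$ should send $\mathcal{M}_k(\Gamma[\mfrak{b}^{-1}, \mfrak{bc}'], \psi)$ into $\mathcal{M}_k(\Gamma[\mfrak{b}^{-1}, \mfrak{bc}' p^2], \psi)$. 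As the highest power of $U_p$ appearing in any $V_m$ with $m \leq 2^n - 1$ is $U_p^{2^n - 1}$ (the $\ell = 0$ term in $V_{2^n - 1}$), each $f | V_{m, p}$ lies in $\mathcal{M}_k(\Gamma[\mfrak{b}^{-1}, \mfrak{bc} p^{2(2^n - 1)}], \psi) = \mathcal{M}_k(\Gamma[\mfrak{b}^{-1}, \mfrak{bc}_0], \psi)$, and so does $f_0$.

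The main obstacle I anticipate is the level bookkeeping for $U_p$: pinning down the precise factor of $p^2$ per application — rather than $p$ or $p^{2n}$ — requires writing the cosets $\Gamma_0 \alpha_{d, s}$ for the relevant double coset explicitly, and then identifying the stabiliser in the full congruence subgroup of the resulting form through the slash operator, making careful use of \eqref{h2} and \eqref{h3} for the metaplectic factor of automorphy and of the standing conditions \eqref{BC1}, \eqref{BC2}. By contrast, the eigenvalue portion is a clean consequence of the Satake isomorphism combined with the telescope and the vanishing $\widetilde{R}_n(\lambda_0^{-1}) = 0$ already assembled.
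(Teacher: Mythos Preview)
Your proposal is correct. The eigenvalue argument is essentially the paper's: the paper applies Proposition~\ref{Rfact} in one stroke, writing
\[
	f_0\big|[\lambda_0 - U_p] \;=\; \lambda_0\, f\Big|\Big[\textstyle\sum_{m}\lambda_0^{-m}V_{m,p}\Big]\big[1-\lambda_0^{-1}U_p\big] \;=\; \lambda_0\, f\big|R(\lambda_0^{-1}),
\]
and then invokes $\widetilde{R}_n(\lambda_{p,1},\dots,\lambda_{p,n};\lambda_0^{-1})=0$; your termwise telescope plus $V_{2^n}=0$ is just the same identity unwound.

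For the level statement, the paper takes a slightly different and cleaner route than the direct coset verification you propose. Rather than tracking the stabiliser of $f|U_p$ through the parabolic representatives and the metaplectic factor of automorphy, it passes to $U_p^- := \Gamma_0\,\diag[pI_n,p^{-1}I_n]\,\Gamma_0$, for which $f|U_p^- = p^{nk}f(p^2z)$ is transparently of level $\Gamma[\mfrak{b}^{-1},\mfrak{bc}p^2]$; hence $U_p^-\Gamma[\mfrak{b}^{-1},\mfrak{bc}p^2]=U_p^-$ as operators. Applying the involution $A_r^{\iota}=A_{r^{-1}}$, which swaps $U_p$ and $U_p^-$, and quoting the argument of \cite[p.~49]{Panchishkin}, one gets $U_p\Gamma[\mfrak{b}^{-1},\mfrak{bc}p^2]=U_p$ as well, so $f|U_p\in\mathcal{M}_k(\Gamma[\mfrak{b}^{-1},\mfrak{bc}p^2],\psi)$. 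This bypasses exactly the ``level bookkeeping for $U_p$'' that you flag as the main obstacle; your direct approach would also succeed but with more work.
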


\begin{proof} Recall $U_p^- := \Gamma_0\begin{psmallmatrix} pI_n & 0 \\ 0 & p^{-1}I_n\end{psmallmatrix}\Gamma_0\in\mathcal{S}_{0p}$; clearly $f|U_p^- = p^{nk}f(p^2z)$ has level $\Gamma[\mfrak{b}^{-1}, \mfrak{bc}p^2]$ and therefore, as operators, $U_p^-\Gamma[\mfrak{b}^{-1}, \mfrak{bc}p^2] = U_p^-$. Recall $\iota$ as the involution on $\mathcal{S}_{0p}$ defined by $A_r^{\iota} = A_{r^{-1}}$, which satisfies $U_p = (U_p^-)^{\iota}$. So, by the argument of \cite[p. 49]{Panchishkin}, we have $U_p\Gamma[\mfrak{b}^{-1}, \mfrak{bc}p^2] = U_p$ as operators as well. So we have that $f|U_p\in\mathcal{M}_k(\Gamma[\mfrak{b}^{-1}, \mfrak{bc}p^2], \psi)$ and the first property follows by definition of $f_0$ and $V_m$.

The action of $\alpha\in\mathbb{C}$ on $f$ is considered the scalar one, i.e. $f|\alpha = \alpha f$. The second property is then given by the calculation
\begin{align*}
	f_0|[\lambda_0-U_p] &= \lambda_0 f_0|[1-\lambda_0^{-1}U_p] \\
		&=\lambda_0 f\bigg|\left[\sum_{m=0}^{2^n-1}(\lambda_0^{-1})^mV_{m, p}\right][1-\lambda_0^{-1}U_p] \\
	&=\lambda_0 f|R(\lambda_0^{-1}),
\end{align*}
where Proposition \ref{Rfact} was invoked in the last line and Definition \ref{pstab} in the penultimate. This is zero since we have that $f|R(\lambda_0^{-1}) = \widetilde{R}(\lambda_{p, 1}, \dots, \lambda_{p, n}; \lambda_0^{-1})f$ and that $(1-\lambda_0z)$ is a factor of $\widetilde{R}(\lambda_{p, 1}, \dots, \lambda_{p, n}; z)$.
\end{proof}

For $q\neq p$, the $q$th Hecke operator commutes with $V_{m, p}$. Therefore $f_0$ and $f$ share the same eigenvalues away from $p$, and we then have the following corollary.

\begin{corollary}\label{pstablfun} Assume that $f_0\neq 0$. If $1\leq \ell\in\mathbb{Z}$ and $\chi$ is a character of conductor $p^{\ell}$, then $L_{\psi}(s, f, \chi) = L_{\psi}(s, f_0, \chi)$. 
\end{corollary}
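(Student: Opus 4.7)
The plan is to verify the equality by comparing local Euler factors of $L_{\psi}(s,f,\chi)$ and $L_{\psi}(s,f_0,\chi)$ prime by prime, using the remark immediately preceding the corollary that the $q$th Hecke operator for $q\neq p$ commutes with each $V_{m,p}$, so that $f$ and $f_0$ have the same Satake parameters at every prime $q\neq p$.

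For a prime $q\neq p$, I would first note that the $q$-part of $\mfrak{c}_0 = \mfrak{c}p^{2(2^n-1)}$ coincides with that of $\mfrak{c}$, so $q\mid\mfrak{c}$ iff $q\mid\mfrak{c}_0$ and $\psi^{\mfrak{c}}(q) = \psi^{\mfrak{c}_0}(q)$ (since the $p$-component of $\psi_{\mfrak{c}_0}$ never enters into values at $q$). Combined with the equality of Satake parameters at $q$, this makes the local polynomial $L_q$ and its point of evaluation the same for $f$ and $f_0$, so the local factors agree at $q$.

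At $q=p$ the two polynomials genuinely differ in shape: for $f$ one has $p\nmid\mfrak{c}$, giving a degree-$2n$ polynomial in $t$, whereas for $f_0$ one has $p\mid\mfrak{c}_0$, giving only a degree-$n$ polynomial, and there is no reason for the Satake parameters of $f_0$ at $p$ to agree with those of $f$. The decisive observation is that $\chi$ has conductor $p^{\ell}$ with $\ell\geq 1$, so the associated ideal character satisfies $\chi^*(p) = 0$. Hence the point of evaluation $(\psi^{\mfrak{c}}\chi^*)(p)p^{-s}$ is $0$, and likewise for $(\psi^{\mfrak{c}_0}\chi^*)(p)p^{-s}$; both local factors therefore reduce to $L_p(0) = 1$, regardless of the Satake parameters at $p$. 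Taking the product over all primes then yields the equality.

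The main ``obstacle'' is not really an obstacle: the one input needed beyond manipulation of definitions is commutativity of Hecke operators at disjoint primes, which is standard and has already been cited just before the corollary. The content of the corollary is essentially that the $p$-stabilisation has been engineered precisely so that twisting by a character ramified at $p$ cannot detect the difference between $f$ and $f_0$ at the level of $L$-functions.
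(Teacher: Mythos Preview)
Your argument is correct and follows exactly the reasoning the paper intends: the sentence preceding the corollary records that $f$ and $f_0$ share Satake parameters at all primes $q\neq p$, and you correctly supply the remaining observation that $\chi^*(p)=0$ forces both local factors at $p$ to equal $L_p(0)^{-1}=1$, together with the bookkeeping that $\mfrak{c}$ and $\mfrak{c}_0$ differ only at $p$. The paper leaves all of this implicit, so your write-up is in fact a fuller version of the same proof rather than a different one.
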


In \cite{Mercuri} we showed, if $n=1$, that the $p$-stabilisation of $f$ takes the form
\begin{align}
	f_0(z) := f(z) - \left(\frac{-1}{p}\right)^{[k]}p^{-\frac{1}{2}}\lambda_{p, 1}^{-1}\left(f\otimes\left(\frac{\cdot}{p}\right)\right)(z) - p^{k-1}\lambda_{p, 1}^{-1}f(p^2z), \label{pstab1}
\end{align}
where, for any Dirichlet character $\ph$ of conductor $F$,
\[
	(f\otimes\ph)(z) := \sum_{n=1}^{\infty}\ph(n)c_f(n, 1)e(nz)
\]
denotes the twist of $f$ by $\ph$. This satisfies $f_0|U_p = p\lambda_{p, 1}f_0$ by direct construction. If $\mfrak{bc}\subseteq\mfrak{b}^{-1}$ (for example, if $\mfrak{b} = 2^{-1}\mathbb{Z}$), then $f\otimes\ph\in\mathcal{S}_k(\Gamma[\mfrak{b}^{-1}, F^2\mfrak{bc}], \psi\ph^2)$, so we can see immediately that $f_0\in\mathcal{S}_k(\Gamma[\mfrak{b}^{-1}, \mfrak{bc}p^2], \psi)$ and this matches the first part of Proposition \ref{pstabprop}.

By definition we have $V_{1, p} = U_p-T_1 = U_p-T_p$ in this case, so the abstract definieion of $f_0$ in Definition \ref{pstab}, when we set $n = 1$, becomes
\[
	f_0 = f+(p\lambda_{p, 1})^{-1}f|V_{1, p} = f+p^{-1}\lambda_{p, 1}^{-1}f|U_p-p^{-1}\lambda_{p, 1}^{-1}\Lambda(p)f
\]
where $\Lambda(p)$ denotes the eigenvalue of $f$ under $T_p$. By Lemma 3.1 (c) of \cite{Mercuri}, this is precisely the form of (\ref{pstab1}) above.

\paragraph{Non-vanishing of $f_0$.} It is not clear from the above method that $f_0\neq 0$ if $f\neq 0$. That $f_0$ may vanish is entirely possible, as is remarked in \cite[p. 50]{Panchishkin}.

Suppose that $\Lambda:\mathcal{R}\to\mathbb{C}$ is a homomorphism defining the eigenvalues of $f$, that is for all $1\leq m\leq 2^n$ we have $f|T_m = \Lambda(T_m)f$. By the definition in (\ref{pstabform}) and of $V_{m, p}$ we get
\begin{align*}
f_0 &= \sum_{m=0}^{2^n-1}\lambda_0^{-m}\sum_{\ell = 0}^m(-1)^{\ell}\Lambda(T_{\ell})f|U_p^{m-\ell} \\
	&= \sum_{v=0}^{2^n-1}\left[\sum_{u=v}^{2^n-1}(-1)^{u-v}\Lambda(T_{u-v})\lambda_0^{-u}f\right]|U_p^{v}.
\end{align*}
Assume that $f\neq 0$, so that we can take $\tau\in S_+$ such that $c_f(\tau, 1)\neq 0$. Using the fact that $c(\tau, 1; f|U_p) = p^{n(n+1-k)}c_f(p^2\tau, 1)$, the above formulation of $f_0$ gives
\begin{align}
	c_{f_0}(\tau, 1) = \sum_{v = 0}^{2^n-1}\left[\sum_{u=v}^{2^n-1}(-1)^{u-v}\Lambda(T_{u-v})\lambda_0^{-u}\right]p^{\frac{n(n+1)}{2}(2-k)v}c_f(p^{2v}\tau, 1). \label{coeffsum}
\end{align}
The above formula may be used as a method of checking, computationally, whether one has $c_{f_0}(\tau, 1)\neq 0$ as well. Given the formula in (\ref{coeffsum}) above, it seems unlikely that $c_{f_0}(\tau, 1)$ should vanish for all $\tau$ outside of a few special cases. As an example, consider the $n = 1$ case and assume that $c_f(\tau, 1)\neq 0$ for some $0<\tau\in\mathbb{Z}$ such that $p^2\nmid\tau$. By (\ref{pstab1}) the coefficient $c_{f_0}(\tau, 1) = 0$ only if $p\lambda_{p, 1} = \left(\frac{-1}{p}\right)^{[k]}\left(\frac{n}{p}\right)\sqrt{p}$. This becomes less trivial a situation if $c_f(\tau, 1)\neq 0$ only for $p^2\mid\tau$. As things become significantly more complex for general $n$, we acknowledge that this does not constitute a particularly strong argument, but it is hopefully enough to convince the reader that there should exist eigenforms $f\neq 0$ for which $f_0\neq 0$ as well.

In \cite[Section 9]{Bocherer}, B\"{o}cherer and Schmidt give an alternative construction for the $p$-stabilisation of a Siegel modular form of integral weight, which does guarantee that $f_0\neq 0$. Though this is perhaps stronger than our construction, one still needs to make an assumption that such a non-zero $f_0$ should exist and this is incorporated into B\"{o}cherer-Schmidt's definition of $p$-regular \cite[p. 1431]{Bocherer}. Their construction takes two Andrianov-type identities of Dirichlet series for $f$ and $f_0$ and uses them to compare their Satake parameters directly. It has a fairly simple generalisation to the present setting by using the identity of \cite[Corollary 5.2]{Shimurahalf}. Indeed this identity becomes almost exactly the same as that of \cite[Proposition 9.1]{Bocherer} by putting $[|x|\mathbb{Z}] = Y^{\ord_p(|x|)}$ and $[v] = Y$ in the notations found in \cite{Shimurahalf}, as well as in the definition of $D(\tau, p; f)$ in \cite[Theorem 5.1]{Shimurahalf}. All that remains is to manipulate the lattice sum, the far right-hand component of \cite[Corollary 5.2]{Shimurahalf}, and express it as a sum of the $U(\pi_{i})$ Hecke operators (defined as the double coset $\Gamma_0\diag[\tilde{\pi_i}, \pi_i]\Gamma_0$ and $\pi_i = \diag[pI_i, I_{n-i}]$). This was done for the Hermitian modular forms in \cite[Section 7]{BouganisHerm}, but remains the same for our case.

\section{Tracing the Rankin-Selberg Integral}
\label{tracesection}

Given the relationship, established in Corollary \ref{pstablfun}, between $L(s, f, \chi)$ and $(s, f_0, \chi)$ the focus can be shifted to the latter. The level, $\mfrak{y}$, of the Rankin-Selberg integral (\ref{RS}) will depend on $\chi$, which dependence we naturally seek to avoid. This is achieved in this section by making crucial use of the behaviour of $f_0$ under $U_p$.

Fix $0<\tau\in S_+$ such that $c_{f_0}(\tau, 1)\neq 0$. Recall $\mfrak{t}$ as an integral ideal such that $h^T(2\tau)^{-1}h\in 4\mfrak{t}^{-1}$ and define
\begin{align}\label{tauhat}
	\hat{\tau} := N(\mfrak{t})(2\tau)^{-1}\in M_n(\mathbb{Z}).
\end{align}
Take a Dirichlet character $\chi$ of modulus $p^{\ell}$ and conductor $p^{\ell_{\chi}}$ with $0\leq \ell_{\chi}\leq \ell\in\mathbb{Z}$, choose a $\mu\in\{0, 1\}$ such that $(\psi_{\infty}\chi)(-1) = (-1)^{[k]+\mu}$, and put $\eta := \psi\bar{\chi}\rho_{\tau}$. 

This section involves many levels and liftings of modular forms through these levels, so first we define and clarify these schematically. Fix $\mfrak{b}$ and note by (\ref{c1}) that $\mfrak{b}^{-1}\mid\mfrak{t}$, so we can think of $f_0$ as a form of level $\Gamma[\mfrak{b}^{-2}, \mfrak{b}^2\mfrak{t}\mfrak{c}_0]$ and put
\[
	\mfrak{y}_{\chi} := [\mfrak{tc}p^{\ell_{\chi}}]^2.
\]
The ideal $\mfrak{y}_{\chi}$ can be taken as the level of the integral in the Rankin-Selberg expression of $L_{\psi}(s, f_0, \chi)$ only if $\ell_{\chi}\geq 2^n-1$; to avoid this condition we generally choose higher levels. The levels involved are $\Gamma_{\alpha} := \Gamma[\mfrak{b}^{-2}, \mfrak{b}^2\mfrak{y}_{\alpha}]$ where the integral ideals $\mfrak{y}_{\alpha}$ are indexed by $\alpha\in\{r, \ell\in\mathbb{Z}\mid \ell_{\chi}\leq\ell\leq r\}\cup\{0\}$. They are defined below, arranged in order of divisibility:
\begin{equation*}
	\begin{split}
		&\mfrak{y}_{r} := \mfrak{y}_{0}p^{2r} \\
		&\rotatebox[origin=c]{270}{$\subseteq$} \\
		&\vdots \\
		&\rotatebox[origin=c]{270}{$\subseteq$} \\
		&\mfrak{y}_{\ell} := \mfrak{y}_{0}p^{2\ell} \\
		&\rotatebox[origin=c]{270}{$\subseteq$} \\
		&\vdots \\
		&\rotatebox[origin=c]{270}{$\subseteq$} \\
		&\mfrak{y}_{1} := \mfrak{y}_0p^{2} \\ 
		&\rotatebox[origin=c]{270}{$\subseteq$} \\
		&\mfrak{y}_0 := \mfrak{t}^2\mfrak{cc}_0.
	\end{split}
\end{equation*}
Later on, when we invoke the Kummer congruences, we shall take a set of Dirichlet characters of varying moduli $p^{\ell}$ and we shall be considering a sum of Rankin-Selberg integral expressions of varying levels $\mfrak{y}_{\ell}$. Then we shall take a single $r\geq 0$ so that all characters in the set are defined modulo $p^r$ and therefore we can simply lift all the Rankin-Selberg integrals of varying levels to all be of the same level $\mfrak{y}_r$ and finally we trace the Rankin-Selberg integral back down to $\mfrak{y}_0$ which process is given in the rest of this section. This is so that we can treat all characters uniformly. In specific cases, i.e. when we consider a single primitive Dirichlet character with $\ell = \ell_{\chi}\geq 2^n-1$, one need not lift up to $r$ in the first place and such a case is given as an example at the end of this section but will not be of much use later on. 

Assuming that $\chi$ is a Dirichlet character of modulus $p^{\ell}$ with $\ell\geq 1$, the Rankin-Selberg expression from \cite[(4.1)]{Shimuraexp} of $L_{\psi}(s, f_0, \bar{\chi})$ is given as
\begin{align}
	\begin{split}
	L_{\psi}(s, f_0, \bar{\chi}) &= \left[\Gamma_n\left(\tfrac{s-n-1+k+\mu}{2}\right)2c_{f_0}(\tau, 1)\right]^{-1}N(\mfrak{b})^{n(n+1)}|4\pi\tau|^{\frac{s-n-1+k+\mu}{2}} \\
	&\hspace{10pt}\times\left(\frac{\Lambda_{\mfrak{c}_0}}{\Lambda_{\mfrak{y}_0}}\right)\left(\tfrac{2s-n}{4}\right) \prod_{q\in\mathbf{b}}g_q\left((\psi^{\mfrak{c}_0}\bar{\chi})(q)q^{-s}\right)\left\langle f_0, \theta_{\bar{\chi}}\mathcal{E}(\cdot, \tfrac{2s-n}{4})\right\rangle_{\mfrak{y}_{r}}V_{r},
	\end{split} \label{f0RS}
\end{align}
in which $r\geq \ell$ and $V_{r} := \Vol(\Gamma[\mfrak{b}^{-2}, \mfrak{b}^2\mfrak{y}_{r}]\back\mathbb{H}_n)$.

Write $Y_{\alpha} := N(\mfrak{b})\sqrt{N(\mfrak{y}_{\alpha})}\in\mathbb{Z}$ for $\alpha\in\{0, \ell, r, \chi\}$; then $Y_0 = N(\mfrak{tbc})p^{2^n-1}$, and note that $Y_{\chi} =~Y_0p^{\ell_{\chi}}$, $Y_{\ell} = Y_0p^{\ell}$, and $Y_r = Y_0p^r$. Also $Y_{\chi} = Y_0p^{\ell_{\chi}-2^n-1}$ if $\ell_{\chi}\geq 2^n-1$.

The definition of the trace map on modular forms is well known; with $\mfrak{b}$ fixed, the map $\Tr_{\mfrak{c}_1}^{\mfrak{c}_2}$ for any $\mfrak{c}_2\subseteq\mfrak{c}_1$ takes modular forms in $\mathcal{M}_k(\Gamma_2, \psi)$ down to forms in $\mathcal{M}_k(\Gamma_1, \psi)$, where $\Gamma_i = \Gamma[\mfrak{b}^{-2}, \mfrak{b}^2\mfrak{c}_i]$, and is defined by decomposing $\Gamma_1 = \bigsqcup_{\gamma}\Gamma_2\gamma$ and summing over all the slash operator actions by these coset representatives. If $g\in~\mathcal{M}_{\frac{n}{2}+\mu}(\Gamma[\mfrak{b}^{-2}, \mfrak{b}^2\mfrak{y}_r], \chi\rho_{\tau})$, then put $F_g(z, s) := g(z)\mathcal{E}(z, \frac{2s-n}{4}; k-\frac{n}{2}-\mu, \bar{\eta}, \Gamma_r)$ and we have
\begin{align*}
	\Tr_{\mfrak{y}_0}^{\mfrak{y}_r}(F_g) = \sum_{u\in S(\mathbb{Z}/p^{2r}\mathbb{Z})} F_g\big\|_k\begin{psmallmatrix} I_n & 0 \\ N(\mfrak{y}_0)u & I_n\end{psmallmatrix} = \sum_{u\in S(\mfrak{b}^{-2}/p^{2r}\mfrak{b}^{-2})}F_g\big\|_k\begin{psmallmatrix} I_n & 0 \\ Y_0^2u & I_n\end{psmallmatrix}.
\end{align*}
Define, for any $M\in\mathbb{Z}$, the matrix
\[
	\iota_M := \begin{pmatrix} 0 & -M^{-1}I_n \\ MI_n & 0\end{pmatrix},
\]
which belongs to $P\iota$ and is therefore in $\mfrak{M}$. Associate to $\iota_M$ the operator $W(M)$, acting on any modular form $h$ of weight $\kappa\in\frac{1}{2}\mathbb{Z}$ by $h|W(M) = h\|_{\kappa}\iota_M$.

\begin{proposition}\label{tracereduce} Let $\chi$ be of modulus $p^{\ell}$, and let $g$ and $F_g$ be as above. If $r\geq 0$ is an integer, then
\[
	\left\langle f_0, F_g(\cdot, s)\right\rangle_{\mfrak{y}_r} = (-1)^{n[k]}\left\langle f_0, H_g(\cdot, s)|U_p^rW(Y_0)\right\rangle_{\mfrak{y}_0},
\]
where $H_g := F_g|W(Y_r)$.
\end{proposition}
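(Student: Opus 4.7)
The plan is to combine adjointness of the trace map under the Petersson inner product with an explicit matrix computation. First, because $f_0$ is $\Gamma_0$-invariant while $F_g$ is only $\Gamma_r$-invariant, unfolding the integral appearing in $\langle f_0, F_g\rangle_{\mfrak{y}_r}$ over the larger fundamental domain for $\Gamma_0$ produces
\[
    \langle f_0, F_g\rangle_{\mfrak{y}_r} = \langle f_0, \Tr_{\mfrak{y}_0}^{\mfrak{y}_r}(F_g)\rangle_{\mfrak{y}_0},
\]
where the ratio of normalizing volumes in the two Petersson products absorbs the index $[\Gamma_0:\Gamma_r]$.

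Second, the factorization $\iota_{Y_r} = \diag[p^{-r}I_n, p^r I_n]\iota_{Y_0}$ gives, for each coset representative $\alpha_v := \begin{psmallmatrix} p^{-r}I_n & p^{-r}v \\ 0 & p^r I_n\end{psmallmatrix}$ of $U_p^r$, the matrix identity
\[
    \iota_{Y_r}\alpha_v\iota_{Y_0} = (-I_{2n})\begin{pmatrix} I_n & 0 \\ -Y_0^2 v & I_n\end{pmatrix}.
\]
Passing to the slash operator and using $F_g|_k(-I_{2n}) = (-1)^{n[k]}F_g$ (which follows from $h_{r_P(-I_{2n})}(z) = 1$ via (\ref{h2}) and $j(-I_{2n}, z)^{[k]} = (-1)^{n[k]}$), then reindexing $v\mapsto -v$ in the sum (the indexing set being closed under negation) and comparing term-by-term with the definition of $\Tr_{\mfrak{y}_0}^{\mfrak{y}_r}(F_g)$, yields
\[
    H_g|U_p^r|W(Y_0) = (-1)^{n[k]}\Tr_{\mfrak{y}_0}^{\mfrak{y}_r}(F_g).
\]
Combining this with the adjointness identity gives the proposition.

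The main obstacle is the careful tracking of the metaplectic factor $h_\sigma$ through the triple product $\iota_{Y_r}\alpha_v\iota_{Y_0}$. Property (\ref{h3}) requires $\pr(\tau)\in C^\theta$ rather than merely $P_\mathbb{A}C^\theta$, so one must first further decompose $\iota_{Y_0} = \diag[Y_0^{-1}I_n, Y_0I_n]\iota$ with $\iota\in C^\theta$, and similarly for $\iota_{Y_r}$, before (\ref{h1})--(\ref{h3}) can be applied iteratively to extract the sign $(-1)^{n[k]}$. A secondary verification is that the index-type factors coming from the unfolding, the coset decomposition of $U_p^r$, and the Hecke operator convention on half-integral weight forms all conspire so that no numerical constant other than $(-1)^{n[k]}$ survives.
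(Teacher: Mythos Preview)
Your proposal is correct and follows essentially the same route as the paper: adjointness of the trace for the Petersson product, followed by the same matrix identity (your $\iota_{Y_r}\alpha_v\iota_{Y_0} = (-I_{2n})\begin{psmallmatrix} I_n & 0 \\ -Y_0^2 v & I_n\end{psmallmatrix}$ is the paper's identity $\begin{psmallmatrix} I_n & 0 \\ Y_0^2u & I_n\end{psmallmatrix}\iota_{Y_0} = \iota_{Y_r}\alpha_{-u}$ multiplied on the right by $\iota_{Y_0}$). The only cosmetic difference is where the sign $(-1)^{n[k]}$ is read off: you extract it from the action of $-I_{2n}$, while the paper extracts it from $W(Y_0)^2 = (-1)^{n[k]}$; since $\iota_{Y_0}^2 = -I_{2n}$ these are the same computation.

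On the metaplectic obstacle you flag: the paper does not iterate the cocycle relations (\ref{h1})--(\ref{h3}) as you propose, but instead computes each factor in the requisite identity directly. It uses $h(\iota_M, z) = |Miz|^{1/2}$ (from $\iota_M\in P\iota$ together with (\ref{h2}), (\ref{h3})), evaluates $J^{1/2}(\alpha_u, z) = p^{rn/2}$ by writing $\alpha_u = \mfrak{z}\xi$ with $\mfrak{z}\in Z_0$, $\xi\in D[2,2]$, and invokes Shimura's explicit formula (Lemma 2.2 of \cite{Shimuratheta}) for $h\big(\begin{psmallmatrix} I_n & 0 \\ Y_0^2u & I_n\end{psmallmatrix}, \iota_{Y_0}z\big) = |{-uz^{-1}+I_n}|^{1/2}$; both sides then visibly equal $|Y_0 i(z-u)|^{1/2}$. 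This direct evaluation is cleaner than threading the decomposition $\iota_{Y_0} = \diag[Y_0^{-1}I_n, Y_0 I_n]\,\iota$ through (\ref{h3}) repeatedly, and it sidesteps the bookkeeping of the $\zeta$-constants in (\ref{h1}).
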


\begin{proof} By the definition of the trace map and substitution of variables in the integral, we have
\[
	\left\langle f_0, F_g(\cdot, s)\right\rangle_{\mfrak{y}_r} = \left\langle f_0, \Tr_{\mfrak{y}_0}^{\mfrak{y}_r}(F_g)\right\rangle_{\mfrak{y}_0}.
\]
To finish, note that $W(Y_0)^2 = (-1)^{n[k]}$ and we claim $\Tr_{\mfrak{y}_0}^{\mfrak{y}_r}(F_g)|W(Y_0) = H_g|U_p^r$, the proof of which, in contrast to the integral-weight case, is twofold. That the matrices corresponding to the operators match is given by the simple matrix multiplication
\[
		\begin{pmatrix} I_n & 0 \\ Y_0^2u & I_n\end{pmatrix}\iota_{Y_0} = \iota_{Y_r}\begin{pmatrix} p^{-r}I_n & -p^{-r}u \\ 0 & p^{r}I_n\end{pmatrix},
\]
for $u\in S(\mfrak{b}^{-2}/p^{2r}\mfrak{b}^{-2})$ and in which we used $Y_r = Y_0p^r$. For the claim to hold however, we need to check that the half-integral weight factors of automorphy match up as well, for which the requisite identity is
\begin{align}
	h\left(\begin{psmallmatrix} I_n & 0 \\ Y_0^2u & I_n\end{psmallmatrix}, \iota_{Y_0}z\right)h(\iota_{Y_0}, z) = h(\iota_{Y_r}, \alpha_uz)J^{\frac{1}{2}}(\alpha_u, z), \label{factoridentity}
\end{align}
where $\alpha_u = \begin{psmallmatrix} p^{-r}I_n & -p^{-r}u \\ 0 & p^rI_n\end{psmallmatrix}$. We have $h(\iota_M, z) = |Miz|^{\frac{1}{2}}$ by considering $\iota_M\in P\iota$ and using (\ref{h2}), (\ref{h3}), and \cite[(2.5)]{Shimuratheta}. Per the definition of $J^k$ in \cite[(2.7)]{Shimurahalf} write $\alpha_u = \mfrak{z}\xi$, where $\mfrak{z}\in Z_0$ and $\xi\in D[2, 2]$ are defined by $\mfrak{z}_{\infty} := I_{2n}$, $\xi_{\infty} := \alpha_u$, $\mfrak{z}_q = \begin{psmallmatrix} p^{-r}I_n & 0 \\ 0 & p^rI_n\end{psmallmatrix}$, $\xi_q = \begin{psmallmatrix} I_n & -u \\ 0 & I_n\end{psmallmatrix}$ for all primes $q$. Thus we get $J^{\frac{1}{2}}(\alpha_u, z) = h(\xi, z) = h(\alpha_u, z) = p^{\frac{rn}{2}}$ by (\ref{h2}).

Finally, by Lemma 2.2 of \cite{Shimuratheta} we have
\[
	h\left(\begin{psmallmatrix} I_n & 0 \\ Y_0^2u & I_n\end{psmallmatrix}, \iota_{Y_0}z\right) = |-uz^{-1}+I_n|^{\frac{1}{2}}.
\]
Making use of $Y_r = Y_0p^r$ and combining all of the above, observe that both sides (\ref{factoridentity}) above coincide with $|Y_0i(z-u)|^{\frac{1}{2}}$. Thus the claim, and therefore the proposition, holds.
\end{proof}

As an example, assume that $\chi$ is primitive, that $\ell = \ell_{\chi}\geq 2^n-1$, and that $g = \theta_{\bar{\chi}}$. Let $H_{\chi}:=H_{\theta_{\bar{\chi}}}$ and $V_{\chi} := \Vol(\Gamma[\mfrak{b}^{-2}, \mfrak{b}^2\mfrak{y}_{\chi}]\back\mathbb{H}_n)$. Taking $r = \ell_{\chi}-2^n-1\geq 0$, we have $\mfrak{y}_r = \mfrak{y}_{\chi}$ and $\Gamma_r = \Gamma_{\chi}$, so applying the above proposition to the integral expression of (\ref{f0RS}) gives
\begin{align*}
	\begin{split}
		L_{\psi}(s, f_0, \bar{\chi}) &= \left[\Gamma_n\left(\tfrac{s-n-1+k+\mu}{2}\right)2c_{f_0}(\tau, 1)\right]^{-1}N(\mfrak{b})^{n(n+1)}|4\pi\tau|^{\frac{s-n-1+k+\mu}{2}}\\
	&\hspace{10pt}\times(-1)^{n[k]}\left(\frac{\Lambda_{\mfrak{c}_0}}{\Lambda_{\mfrak{y}_0}}\right)\left(\tfrac{2s-n}{4}\right)\prod_{q\in\mathbf{b}}g_q\left((\psi^{\mfrak{c}_0}\bar{\chi})(q)q^{-s}\right)\\
	&\hspace{10pt}\times \left\langle f_0, H_{\chi}|U_p^{\ell_{\chi}-2^n-1}W(Y_0)\right\rangle_{\mfrak{y}_0}V_{\chi}.
	\end{split}
\end{align*}

\section{A transformation formula of the theta series}
\label{thetasection}

\noindent Transformation formulae for theta series of the form $\theta_{\chi}|W(Y_{\chi})$ when $\chi$ is a primitive Dirichlet character are generally well-known entities. The precise formula of this section is encompassed by the generality of both Theorem A3.3 and Proposition A3.17 of \cite{Shimurabook}; what follows is a concrete derivation and calculation of the integrals found in the aforementioned results. Theorem A3.3 of \cite{Shimurabook} gives the existence of a $\mathbb{C}$-linear automorphism $\lambda\mapsto\action{\lambda}$ of $\mfrak{M}$ on the space of ``Schwartz functions on $M_n(\mathbb{Q}_{\mathbf{f}})$'', and it gives formulae of this action by $P_{\mathbb{A}}$ and the inversion $\iota = \begin{psmallmatrix} 0 & -I_n \\ I_n & 0\end{psmallmatrix}$. This is relevant since a more general class of theta series is defined using Schwartz functions $\lambda$ by
\begin{align*}
	\theta(z, \lambda) := \sum_{x\in M_n(\mathbb{Q})} \lambda(x_{\mathbf{f}})|x|^{\mu}e(x^T\tau xz),
\end{align*}
for a fixed $\tau\in S_+$ and $\mu\in\{0, 1\}$. If $\chi$ is a Hecke character of conductor $\mfrak{f}$, then putting $\lambda := \prod_p\lambda_p$ and 
\begin{align}
	\lambda_p(y) :=\begin{cases} 	
		1 &\text{if $y\in M_n(\mathbb{Z}_p)$ and $p\nmid\mfrak{f}$}, \\
		\chi_p(|y|) &\text{if $y\in GL_n(\mathbb{Z}_p)$ and $p\mid\mfrak{f}$}, \\
		0 &\text{otherwise},
	\end{cases} \label{schwartzchi}
\end{align}
gives the series $\theta(z, \lambda) = \theta_{\chi}^{(\mu)}(z; \tau)$ of (\ref{theta}).

Assume that $\chi$ is a Hecke character of conductor $p^{\ell_{\chi}}$ and let $\iota_{\chi} = \iota_{Y_{\chi}}$. Since $\iota_{\chi}\in C^{\theta}$, Proposition A3.17 of \cite{Shimurabook} says that 
\begin{align}
	\theta(z, \lambda)|W(Y_{\chi}) = \theta\left(z, \Waction{\lambda}\right)\label{thetatranss}
\end{align}
and so we calculate $\Waction{\lambda}$. Note $\iota_{\chi}^{-1} = \iota\sigma$ where
\[
	\sigma := \begin{pmatrix} -Y_{\chi}I_n & 0 \\ 0 & -Y_{\chi}^{-1}I_n\end{pmatrix}\in P,
\]
and so $\Waction{\lambda} = \iotaaction{(\action{\lambda})}$. Let $d = \frac{n^2}{2}$ if $n$ is even, $d = 0$ if $n$ is odd and let $d_py$ be the Haar measure on $M_n(\mathbb{Q}_p)$ such that the measure of $bM_n(\mathbb{Z}_p)$ is $|b|_p^{n^2/2}$ for any $b\in\mathbb{Q}$. Theorem A3.3 (5), and equation (A3.3) of \cite{Shimurabook}, and the definition of $\lambda$ in (\ref{schwartzchi}) above gives
\begin{align*}
 \iotaaction{(\action{\lambda})}_p(x) &= i^d\chi_{\infty}(-1)^n|Y_{\chi}|_p^{\frac{n^2}{2}}|\det(2\tau)|_p^{\frac{n}{2}}\int_{Y_{\chi}^{-1}GL_n(\mathbb{Z}_p)}\chi_p(|Y_{\chi}y|)e_p(-\tr(x^T2\tau y))d_{p}y \\
	&=i^d\chi_{\infty}(-1)^n|\det(2\tau)|_p^{\frac{n}{2}}\int_{GL_n(\mathbb{Z}_p)}\chi_p(|y|)e_p\left(-\tfrac{\tr(x^T2\tau y)}{Y_{\chi}}\right)d_py,
\end{align*}
making the change of variables $y\mapsto Y_{\chi}y$ in the last line. By the definition of $\hat{\tau}$ in (\ref{tauhat}) and $Y_{\chi} = N(\mfrak{tbc})p^{\ell_{\chi}}$ we have $\iotaaction{(\action{\lambda})}_p(x)$ is equal to
\begin{align*}
	\frac{|\det(2\tau)|_p^{\frac{n}{2}}}{(-i)^d\chi_{\infty}(-1)^n}\sum_{a\in GL_n(\mathbb{Z}/p^{\ell_{\chi}}\mathbb{Z})}\chi_p(|a|)e\left(\tfrac{\tr(x^T\hat{\tau}^{-1}a)}{N(\mfrak{bc})p^{\ell_{\chi}}}\right)\int_{p^{\ell_{\chi}}GL_n(\mathbb{Z}_p)} e_p\left(-\tfrac{\tr(x^T\hat{\tau}^{-1}y)}{N(\mfrak{bc})p^{\ell_{\chi}}}\right)d_py.
\end{align*}
The integral in the above equation is non-zero if and only if the integrand is a constant function in $y$ -- i.e. if and only if $x\in |N(\mfrak{bc})|_p^{-1}\hat{\tau}M_n(\mathbb{Z}_p)$ -- at which point it is $p^{-\ell_{\chi}\frac{n^2}{2}}$. Likewise by the same process, if $q\neq p$, we have $\iotaaction{(\action{\lambda})}_q(x)\neq 0$ if and only if $x\in |N(\mfrak{bc})|_q^{-1}\hat{\tau}M_n(\mathbb{Z}_q)$ at which point it is $|\det(2\tau)|_q^{\frac{n}{2}}$. Therefore $\iotaaction{(\action{\lambda})}(x)\neq 0$ if and only if $x\in N(\mfrak{bc})\hat{\tau}M_n(\mathbb{Z})$, for which
\begin{align}
	\iotaaction{(\action{\lambda})}(x) &= i^d\chi_{\infty}(-1)^n|2\tau|^{-\frac{n}{2}}p^{-\ell_{\chi}\frac{n^2}{2}}G_n(N(\mfrak{bc})^{-1}\hat{\tau}^{-1}x, \bar{\chi}), \label{actionlambda}
\end{align}
where, for any Hecke character $\ph$ of conductor $\mfrak{f}$ and $X\in M_n(\mathbb{Z})$, 
\[
	G_n(X, \ph) := \sum_{a\in GL_n(\mathbb{Z}/N(\mfrak{f})\mathbb{Z})}\ph_{\mfrak{f}}^{-1}(|a|)e\left(\tfrac{\tr(X^Ta)}{N(\mfrak{f})}\right)
\]
denotes the $n$-degree Gauss sum and put $G_n(\ph) = G_n(I_n, \ph)$. If $\ph$ is a primitive Dirichlet character then $G_n(X, \ph) = \ph(|X|)^{-1}G_n(\ph)$ if $(|X|, N(\mfrak{f}))=1$ and $G_n(X, \ph) = 0$ if $(|X|, N(\mfrak{f}))\neq 1$. So, under the assumption that $\chi$ is a primitive Dirichlet character and $x\in N(\mfrak{bc})\hat{\tau}M_n(\mathbb{Z})$, (\ref{actionlambda}) becomes
\begin{align}
	\iotaaction{(\action{\lambda})}(x) &= i^d\chi(-1)^n|2\tau|^{-\frac{n}{2}}p^{-\ell_{\chi}\frac{n^2}{2}}\chi(|N(\mfrak{bc})^{-1}\hat{\tau}^{-1}x|)G_n(\bar{\chi}). \label{actionlambda2}
\end{align}
Hence, by the calculation in (\ref{actionlambda2}), the transformation formula (\ref{thetatranss}) on theta series with Schwartz functions translates, when $\chi$ is a primitive Dirichlet character, to
\[
	\theta_{\chi}|W(Y_{\chi}) = \frac{i^d\chi(-1)^n}{|2\tau|^{\frac{n}{2}}}p^{-\ell_{\chi}\frac{n^2}{2}}G_n(\bar{\chi})\sum_{x\in N(\mfrak{bc})\hat{\tau}M_n(\mathbb{Z})} \chi(|N(\mfrak{bc})^{-1}\hat{\tau}^{-1}x|)|x|^{\mu}e(x^T\tau xz)
\]
and this becomes, by writing $x = N(\mfrak{bc})\hat{\tau}x'$ and $N(\sqrt{\mfrak{t}}) = |N(\mfrak{t})^{\frac{1}{2}}|$, the desired formula
\begin{align}
	\theta_{\chi}^{\mu}(z; \tau)|W(Y_{\chi}) = \chi(-1)^n\frac{i^dN(\mfrak{tbc})^{n\mu}}{|2\tau|^{\frac{n}{2}+\mu}}p^{-\ell_{\chi}\frac{n^2}{2}}G_n(\bar{\chi})\theta_{\bar{\chi}}^{(\mu)}\left(N(\sqrt{\mfrak{t}}\mfrak{bc})^2\frac{z}{2}; \hat{\tau}\right). \label{thetatrans}
\end{align}
	
\section{Fourier expansions of Eisenstein series}
\label{eisensteinsection}

\noindent The holomorphic projection map $\mathbf{Pr}:C_{\kappa}^{\infty}(\Gamma)\to\mathcal{M}_{\kappa}(\Gamma, \psi)$ and its explicit action on Fourier coefficients is well-known when $2n<\kappa\in\mathbb{Z}$ -- see Theorem 4.2 of \cite[p. 71]{Panchishkin}. This has a simple extension to the half-integral weight case with the formulae remaining unchanged, and we did this in \cite[Theorem 3.1]{Mercuri2}. 

Given Proposition \ref{tracereduce} and the transformation formula (\ref{thetatrans}), it will be germane to give the explicit Fourier development of $\mathbf{Pr}([\theta_{\chi}^{\star}\mathcal{E}^{\star}]|U_p^r)$, where 
\begin{align*}
	\theta_{\chi}^{\star}(z) :&= \theta_{\chi}^{(\mu)}\left(N(\sqrt{\mfrak{t}}\mfrak{bc})^2\frac{z}{2}; \hat{\tau}\right), \\
	\mathcal{E}^{\star}(z) :&= \mathcal{E}(z, \tfrac{2m-n}{4}; k-\tfrac{n}{2}-\mu, \bar{\eta}, \Gamma_r)|W(Y_r),
\end{align*}
 for certain values $m\in\frac{1}{2}\mathbb{Z}$ defined below. To ease up on notation, let
\[
	\delta := n\pmod{2}\in\{0, 1\}.
\]

The projection map is only applicable for certain values $s$ at which the Eisenstein series satisfies growth conditions; restriction to the set of special values, $\Omega_{n, k}$, at which the standard $L$-function satisfies algebraicity results guarantees this and this set is given by
\begin{align*}
	\Omega_{n, k}^+ :&= \left\{m\in\tfrac{1}{2}\mathbb{Z}\bigg|\tfrac{k-m-\mu}{2}\in\mathbb{Z}, n<m\leq k-\mu\right\}, \\
	\Omega_{n, k}^- :&= \left\{m\in\tfrac{1}{2}\mathbb{Z}\bigg|\tfrac{m+k-\mu-1}{2}\in\mathbb{Z}, 2n+1-k+\mu\leq m\leq n\right\}, \\
	\Omega_{n, k} :&= \Omega_{n, k}^-\cup\Omega_{n, k}^+.
\end{align*}

\begin{proposition}\label{eisenexp} For any $\varsigma\in S_+$, define
\[
	V_{\varsigma} := \left\{(\varsigma_1, \varsigma_2)\in M_n(\mathbb{Z})\times S_+\mid \tfrac{N(\sqrt{\mfrak{t}}\mfrak{bc})^2}{2}\varsigma_1^T\hat{\tau}\varsigma_1 + \varsigma_2 = \varsigma\right\}.
\]
Assume that $k>2n$, $\chi$ is a Dirichlet character, and $m\in\Omega_{n, k}$. For any $\beta\in\mathbb{Z}$, there exists a polynomial $P(\sigma, \sigma'; \beta)\in\mathbb{Q}[\varsigma_{ij}, \varsigma_{ij}'\mid 1\leq i\leq j\leq n]$, defined on $\sigma, \sigma'\in S_+$; a finite subset $\mathbf{c}$ of primes; polynomials $f_{\sigma, q}\in\mathbb{Z}[t]$, defined for each $\sigma\in S_+$ and $q\in\mathbf{c}$, whose coefficients are independent of $\chi$; and a factor
\begin{align*}
	C_{\pm}^{\star}(\sigma, m) :&= i^{-n\left(\left[k-\frac{n}{2}-\mu\right]\right)}N(\mfrak{b}^2\mfrak{y}_r)^{n\left(\frac{3n-2m}{2}-k+\mu\right)} 2^{n(k-\mu+\frac{3}{2})}\pi^{n\left(\frac{m+k-n-\mu}{2}\right)} \\
	&\hspace{10pt}\times\Gamma_n\left(\tfrac{m+k-n-\mu}{2}\right)^{-1}|\sigma|^{m_{\pm}}\prod_{q\in\mathbf{c}}f_{\sigma, q}(\bar{\eta}(q)q^{\frac{n+\delta-1}{2}-m}),
\end{align*}
where $m_+ = m-n-\frac{1}{2}$ and $m_- = 0$, such that if $m\in\Omega_{n, k}\back\{n+\frac{1}{2}\}$ (and $m\neq n+\frac{3}{2}$ if $n>1$ and $(\psi^*\chi)^2 = 1$), then $\mathbf{Pr}([\theta_{\chi}^{\star}\mathcal{E}^{\star}]|U_p^r)$ has non-zero Fourier coefficients only when $\sigma>0$ at which
\[
	c\left(\sigma, 1; \mathbf{Pr}([\theta_{\chi}^{\star}\mathcal{E}^{\star}]|U_p^r)\right) = \sum_{(\sigma_1, \sigma_2)\in V_{p^r\sigma}}\chi(|\sigma_1|)|\sigma_1|^{\mu}C^{\star}_+(\sigma_2, m)P\left(\sigma_2, p^r\sigma; \tfrac{k-m-\mu}{2}\right)
\]
if $m\in\Omega_{n, k}^+$ whereas
\[
	c\left(\sigma, 1; \mathbf{Pr}([\theta_{\chi}^{\star}\mathcal{E}^{\star}]|U_p^r)\right) = \sum_{(\sigma_1, \sigma_2)\in V_{p^r\sigma}}\chi(|\sigma_1|)|\sigma_1|^{\mu}C^{\star}_-(\sigma_2, m)P\left(\sigma_2, p^r\sigma; \tfrac{k+m-\mu-1-2n}{2}\right)
\]
if $m\in\Omega_{n, k}^-$.

Furthermore, the polynomial $P(\sigma, \sigma'; \beta)$ satisfies $P(\sigma, \sigma'; \beta) \equiv |\sigma|^{\beta}\pmod{\sigma_{ij}'}$.
\end{proposition}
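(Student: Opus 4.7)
The plan is to compute the Fourier expansion of $[\theta_\chi^\star\mathcal{E}^\star]|U_p^r$ by convolving the Fourier expansions of the two factors, and then pass through the explicit formula for $\mathbf{Pr}$ on Fourier coefficients established in the half-integral setting by \cite[Theorem 3.1]{Mercuri2}. First I would recall that $\theta_\chi^\star$ has holomorphic Fourier expansion indexed by $\sigma_1\in M_n(\mathbb{Z})$, contributing the term $(\chi_\infty\chi^*)^{-1}(|\sigma_1|)|\sigma_1|^\mu$ at the exponential $e\bigl(\tr\bigl(\tfrac{N(\sqrt{\mfrak{t}}\mfrak{bc})^2}{2}\sigma_1^T\hat\tau\sigma_1\cdot z\bigr)\bigr)$, directly from (\ref{theta}). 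For the Eisenstein part, I would invoke Shimura's explicit Fourier development of $\mathcal{E}(z,s;\kappa,\bar\eta,\Gamma_r)$ from \cite[Chapters 17--18]{Shimurabook}: at the special values $m\in\Omega_{n,k}\setminus\{n+\tfrac{1}{2}\}$ (with the further exclusion of $m=n+\tfrac{3}{2}$ to avoid a pole from the normalising $\Lambda^{n,\kappa}$ in the stated case), the confluent hypergeometric functions present in each coefficient degenerate into a polynomial in the entries of $y^{-1}$ times $e^{-2\pi\tr(\sigma_2 y)}$. The gamma, power of $\pi$ and level contributions comprise exactly the factor $C_\pm^\star(\sigma_2,m)$, while the local Euler-type polynomials at bad primes yield the factors $f_{\sigma_2,q}$. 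The slash by $W(Y_r)$ accounts for the appearance of $N(\mfrak{b}^2\mfrak{y}_r)$ in $C_\pm^\star$ and the sign $\pm$ reflects whether one lies in $\Omega_{n,k}^+$ or $\Omega_{n,k}^-$ (the two being related by the functional equation $s\mapsto\tfrac{n+1}{2}-s$).

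Multiplying the two expansions, the Fourier coefficient of $\theta_\chi^\star\mathcal{E}^\star$ at index $\varsigma$ is a sum over pairs $(\sigma_1,\sigma_2)$ with $\tfrac{N(\sqrt{\mfrak{t}}\mfrak{bc})^2}{2}\sigma_1^T\hat\tau\sigma_1+\sigma_2=\varsigma$, which is precisely the set $V_\varsigma$. The subsequent action of $U_p^r$ shifts indices, and the analogue of the formula $c(\tau,1;f|U_p)=p^{n(n+1-k)}c_f(p^2\tau,1)$ recorded in Section \ref{pstabsection}, applied to each non-holomorphic coefficient, pins down the surviving pairs as $V_{p^r\sigma}$ in the formulation of the statement (after accounting for the $Y_r=Y_0p^r$ scaling inherited from $W(Y_r)$).

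The next and critical step is the application of holomorphic projection. Because $k>2n$ and the coefficients of $\mathcal{E}^\star$ grow only polynomially in $y^{-1}$ at the admissible $m$, the half-integral weight version of Panchishkin's projection formula in \cite[Theorem 3.1]{Mercuri2} applies. For $\sigma>0$, the projection sends each Fourier coefficient, given above as a sum over $V_{p^r\sigma}$, to the integral of its $y$-dependent factor against $e^{-4\pi\tr(\sigma y)}\Delta(z)^{k-n-1}$ on the cone of positive-definite $y$; the multivariate gamma identity reduces this integral to a polynomial expression, which is packaged as $P(\sigma_2,p^r\sigma;\beta)$ with $\beta=\tfrac{k-m-\mu}{2}$ in the $\Omega_{n,k}^+$ case and $\beta=\tfrac{k+m-\mu-1-2n}{2}$ in the $\Omega_{n,k}^-$ case. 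The vanishing of the output coefficients at non-positive-definite $\sigma$ comes automatically, since the projection lands in $\mathcal{S}_k$ whenever $k>2n$ (cf.\ loc.\ cit.) and the Eisenstein contribution at $\sigma_2\not>0$ vanishes upon integration against the exponential weight.

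The main obstacle will be verifying the stated congruence $P(\sigma,\sigma';\beta)\equiv|\sigma|^\beta\pmod{\sigma'_{ij}}$. This reduces to isolating the term of lowest order in $y^{-1}$ in the non-holomorphic Eisenstein coefficient: that term integrates against $e^{-2\pi\tr(\sigma y)}\Delta(z)^{k-n-1}$ via the gamma identity to produce precisely $|\sigma_2|^\beta$, while the higher-order terms in $y^{-1}$ furnish the dependence of $P$ on the entries $\sigma'_{ij}$ and therefore vanish modulo $\sigma'_{ij}$. Careful tracking of Shimura's normalisation conventions and of the powers of $Y_r$ introduced by $W(Y_r)$ and $U_p^r$ is essential to ensure no stray factors are absorbed into this leading term, and this bookkeeping will constitute the technical heart of the proof.
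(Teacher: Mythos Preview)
Your approach is essentially the same as the paper's: compute the Fourier expansion of $\mathcal{E}^\star$ via Shimura's explicit formulae, multiply by the theta expansion to get a convolution over $V_\varsigma$, apply $U_p^r$, and then project. The paper is slightly more specific about the sources: the positivity restriction $\sigma>0$ and the raw coefficients come from Propositions 16.9, 16.10 and 17.6 of \cite{Shimurabook}; the degeneration of the confluent hypergeometric piece at the special $m$ is carried out via the function $\xi(y,\sigma;s+\tfrac{\kappa}{2},s-\tfrac{\kappa}{2})$ and its analytic continuation through Shimura's $\omega$-function in \cite{Shimurahyp} (Theorem 3.1, (4.7.K), (4.10), Proposition 3.2), which produces an explicit differential-operator polynomial
\[
R(g;\beta,s')=(-1)^{\beta n}e^{\tr(g)}|g|^{\beta+s'}\det\!\left[\tfrac{\partial_n}{\partial_n g}\right]^\beta\!\bigl(e^{-\tr(g)}|g|^{-s'}\bigr).
\]
It is this $R$ (evaluated at $g=4\pi\sigma y$) that, after projection, becomes the polynomial $P(\sigma_2,p^r\sigma;\beta)$; the congruence $P\equiv|\sigma|^\beta\pmod{\sigma'_{ij}}$ is then immediate from the leading term of $R$, matching your ``lowest-order in $y^{-1}$'' description.

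One small correction: the vanishing of the projected coefficients at $\sigma\not>0$ does not come from $\mathbf{Pr}$ landing in $\mathcal{S}_k$ (the target of $\mathbf{Pr}$ is only $\mathcal{M}_k$). Rather, Proposition 17.6 of \cite{Shimurabook} already gives that $\mathcal{E}_\kappa(\cdot,s)|\iota$, and hence $\mathcal{E}^\star$, has Fourier expansion supported on $\sigma_2>0$ for the admissible values of $m$; since the theta contribution is positive semi-definite, the product and its projection are then automatically supported on $\sigma>0$.
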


When $k\in\mathbb{Z}$ and $n$ is even the above kind of result is well-known, see for example Theorem 4.6 of \cite[p. 77]{Panchishkin}. Since the definition of the projection map remains unchanged, we can obtain the above in a similar manner, by using results on the Fourier development of integral and half-integral weight Eisenstein series as follows.

Let $\kappa\in\frac{1}{2}\mathbb{Z}$ be such that $2\kappa + n\notin 2\mathbb{Z}$, and let $\mathcal{E}_{\kappa}(z, s) = \mathcal{E}(z, s; \kappa, \bar{\eta}, \Gamma[\mfrak{x}^{-1}, \mfrak{xy}])$, assuming as always that $(\mfrak{x}^{-1}, \mfrak{xy})\subseteq 2\mathbb{Z}\times 2\mathbb{Z}$ if $\kappa\notin\mathbb{Z}$. Further assume that $N(\mfrak{y})$ and $N(\mfrak{x})$ are both squares and let $Y := \sqrt{N(\mfrak{xy})}\in\mathbb{Z}$. If $2s+\frac{n}{2}\in\Omega_{n, k}^{\pm}$, $s\neq\frac{n+1}{4}$, and $s\neq \frac{n+3}{4}$ if $n>1$ and $\eta^2 = 1$, then by Proposition 17.6 of \cite{Shimurabook}, the analytic continuation of the Eisenstein series, and the fact that $\iota\Gamma[\mfrak{x}^{-1}, \mfrak{xy}]\iota = \Gamma[\mfrak{xy}, \mfrak{x}^{-1}]$, we have
\[
	\mathcal{E}_{\kappa}(z, s)|\iota =
	\displaystyle\sum_{\substack{0<\sigma\in S_+ \\ N(\mfrak{xy})\sigma\in S^{\triangledown}}} c^{\pm}(\sigma, y, s)e(\sigma x),
\]
where, if $\sigma>0$, we have by Propositions 16.9 and 16.10 of \cite{Shimurabook} that
\begin{align*}
	c^{\pm}(\sigma, y, s) :&= i^{n(\kappa-[\kappa])}N(\mfrak{xy})^{-\frac{n(n+1)}{2}}|y|^{s-\frac{\kappa}{2}}\xi(y, \sigma; s+\tfrac{\kappa}{2}, s-\tfrac{\kappa}{2}) \\
	&\hspace{10pt}\times\prod_{q\in\mathbf{c}}f_{Y^2\sigma, q}(\bar{\eta}(q)q^{-2s+[\kappa]-\kappa}), \\
	\xi(g, h; s, s') :&= \int_{S_{\infty}}e(-hx)|x-ig|^{-s}|x-ig|^{-s'}dx,
\end{align*}
defined for $0< g\in S_{\infty}, h\in S_{\infty}$, and $s, s'\in\mathbb{C}$. The above integral converges for large enough $\Re(s+s')$, but is continued analytically via the hyperconfluent geometric function $\omega(g, h; s, s')$ of \cite{Shimurahyp}. Through this analytic continuation one can represent $\xi(y, \sigma, s+\frac{\kappa}{2}, s-\frac{\kappa}{2})$, for the above values of $s$, in terms of the polynomial 
\begin{align*}
	R(g; \beta, s') :&= (-1)^{\beta n}e^{\tr(g)}|z|^{\beta+s'}\det\left[\frac{\partial_n}{\partial_ng}\right]^{\beta} (e^{-\tr(g)}|z|^{-s'}), \\
	\frac{\partial_n}{\partial_ng} :&= \left(\frac{1+\delta_{ij}}{2}\frac{\partial}{\partial g_{ij}}\right)_{i, j=1}^n, 
\end{align*}
where $0\leq\beta\in\mathbb{Z}$. This is obtained by using, in order, the relation (17.11) and analytic continuation of the hyperconfluent geometric function of \cite[Theorem 3.1]{Shimurahyp}; the properties (4.7.K) and (4.10) of \cite{Shimurahyp} and the definitions (3.23)--(3.24) of \cite[p. 63]{Panchishkin}; and, finally, Proposition 3.2 of \cite{Shimurahyp} to get
\begin{align}
	\begin{split}
	c^+(\sigma, y, s) &= C(\sigma, s)|4\pi y|^{s-\frac{\kappa}{2}}R\left(4\pi\sigma y; \tfrac{\kappa}{2}-s, \tfrac{n+1-\kappa}{2}-s\right)e^{-2\pi\tr(\sigma y)}, \\
	c^-(\sigma, y, s) &= C(\sigma, s)|4\pi y|^{\frac{n+1-\kappa}{2}-s}R\left(4\pi \sigma y; s+\tfrac{\kappa-n-1}{2}, s-\tfrac{\kappa}{2}\right)e^{-2\pi\tr(\sigma y)}, \\
	C(\sigma, s) :&= i^{-n[\kappa]}N(\mfrak{xy})^{-\frac{n(n+1)}{2}}2^{n(\kappa+\frac{n+3}{2})}\pi^{n(s+\frac{\kappa}{2})}\Gamma_n(s+\tfrac{\kappa}{2})^{-1} \\
	&\hspace{10pt}\times |\sigma|^{2s+\frac{\kappa-n-1}{2}}\prod_{q\in\mathbf{c}}f_{Y^2\sigma, q}(\bar{\eta}(q)q^{-2s+[\kappa]-\kappa}).
	\end{split} \label{cpm}
\end{align}
Now, since $\mathcal{E}_{\kappa}(z, s)|W(Y) = Y^{-n\kappa}(\mathcal{E}_{\kappa}(\cdot, s)|\iota)(Y^2z)$, we have that
\[
	\mathcal{E}_{\kappa}(z, s)|W(Y) = \sum_{0<\sigma\in S_+^{\triangledown}}c_{\mfrak{y}}^{\pm}(\sigma, y, s)e(\sigma x),
\]
where $c_{\mfrak{y}}^{\pm}(\sigma, y, s) := Y^{-n\kappa}c^{\pm}(Y^{-2}\sigma, Y^2y, s)$ are given explicitly by
\begin{align*}
	c_{\mfrak{y}}^+(\sigma, y, s) &= Y^{-n\kappa}C(Y^{-2}\sigma, s)|4\pi \sigma y|^{s-\frac{\kappa}{2}}R\left(4\pi\sigma y; \tfrac{\kappa}{2}-s, \tfrac{n+1-\kappa}{2}-s\right)e^{-2\pi\tr(\sigma y)}, \\
	c_{\mfrak{y}}^-(\sigma, y, s) &= Y^{-n\kappa}C(Y^{-2}\sigma, s)|4\pi\sigma y|^{\frac{n+1-\kappa}{2}-s}R\left(4\pi\sigma y;  s-\tfrac{n+1-\kappa}{2}, s-\tfrac{\kappa}{2}\right)e^{-2\pi\tr(\sigma y)}.
\end{align*}
Put $C_{\mfrak{y}}^+(\sigma, s) := Y^{-n\kappa}|\sigma|^{s-\frac{\kappa}{2}}C(Y^{-2}\sigma, s)$ and $C_{\mfrak{y}}^-(\sigma, s) := Y^{-n\kappa}|\sigma|^{\frac{n+1-\kappa}{2}-s}C(Y^{-2}\sigma, s)$.

Now let $g\in\mathcal{M}_{\ell}$ be a holomorphic modular form and let $F_g^{\star}(\cdot, s) = g[\mathcal{E}_{\kappa}(\cdot, s)|W(Y)]$. By analogy to Theorem 4.6 of \cite[p. 77]{Panchishkin} and using (\ref{cpm}), the coefficients after application of $\mathbf{Pr}$ are given by
\begin{align}
	c(\sigma, 1; \mathbf{Pr}(F_{g}^{\star}(\cdot, s)|U_p^r)) = \sum_{\sigma_1+\sigma_2=p^r\sigma}c_g(\sigma_1, 1)C_{\mfrak{y}}(\sigma, s)P\left(\sigma_1, p^r\sigma; \tfrac{\kappa}{2}-s\right) \label{projexpplus}
\end{align}
when $2s+\frac{n}{2}\in\Omega_{n, k}^+$, $s\neq \frac{n+1}{4}$ ($s\neq\frac{n+3}{4}$ if $\eta^2 = 1$ and $n > 1$), and $\sigma>0$, whereas
\begin{align}
	c(\sigma, 1; \mathbf{Pr}(F_g^{\star}(\cdot, z)|U_p^r)) = \sum_{\sigma_1+\sigma_2=p^r\sigma}c_g(\sigma_1, 1)C_{\mfrak{y}}(\sigma, s)P\left(\sigma_1, p^r\sigma; s+\tfrac{\kappa-n-1}{2}\right) \label{projexpminus}
\end{align}
if $2s+\frac{n}{2}\in\Omega_{n, k}^-$ and $\sigma>0$. In both cases the coefficients are zero for $\sigma\leq 0$.

Specialising (\ref{projexpplus}) and (\ref{projexpminus}) to the case $\mfrak{x} = \mfrak{b}^2$, $\mfrak{y} = \mfrak{y}_r$, $\kappa = k-\frac{n}{2}-\mu$, $\ell = \frac{n}{2}+\mu$, $g = \theta_{\chi}^{\star}$, and $s = \frac{2m-n}{4}$ for $m\in\Omega_{n, k}^{\pm}$, and also putting $C^{\star}_{\pm}(\sigma, m) := C_{\mfrak{y}_r}^{\pm}(\sigma, \frac{2m-n}{4})$ gives Proposition \ref{eisenexp}.

\section{$p$-adic interpolation}
\label{interpolation}

\subsection{$p$-adic measures and the main theorem}

\noindent Though complex $L$-functions are defined on variables $s\in\mathbb{C}$, they can equally be viewed as Mellin transforms of the continuous characters $\mathbb{R}_{>0}\to\mathbb{C}^{\times}; y\mapsto y^s$. In this latter vantage point, $p$-adic $L$-functions can naturally be constructed as Mellin transforms of continuous characters on $\mathbb{Z}_p^{\times}$ with respect to a $p$-adic measure. 

Fix a prime $p\nmid\mfrak{c}$, let $\mathbb{C}_p := \widehat{\overline{\mathbb{Q}}}_p$ denote the completion of the algebraic closure of $\mathbb{Q}_p$, and fix an embedding $\iota_p:\overline{\mathbb{Q}}\hookrightarrow\mathbb{C}_p$. The $p$-adic norm naturally extends to $\mathbb{C}_p$ and its ring of integers is given by
\[
	\mathcal{O}_p := \{x\in\mathbb{C}_p\mid |x|_p\leq 1\}.
\]
The domain of the $p$-adic $L$-function will be
\[
	X_p := \{x\in \Hom(\mathbb{Z}_p^{\times}, \mathbb{C}_p^{\times})\mid x\ \text{is continuous}\}.
\]
The discussion in \cite[pp. 23--25]{Panchishkin} concerning the decomposition of $X_p$ tells us that any $\mathbb{C}_p$-analytic function $F$ on $X_p$ is uniquely determined by its values $F(\chi_0 \chi)$ for a fixed $\chi_0\in X_p$ and $\chi$ ranging over non-trivial elements of $X_p^{\tors}$. This torsion subgroup can be identified as the group of primitive Dirichlet characters having $p$-power conductor. So to define a $p$-adic measure, it is enough to give its values on $\chi x_p^{[m]}$ where $\chi$ is a non-trivial primitive Dirichlet character of $p$-power conductor, $[m] = m-\frac{1}{2}\in\mathbb{Z}$, and 
\begin{align*}
	x_p^{[m]}:\mathbb{Z}_p^{\times}&\to\mathbb{C}_p^{\times} \\
	y&\mapsto y^{[m]}.
\end{align*}

\begin{definition}\label{distribution} Let $LC(\mathbb{Z}_p^{\times}, \mathbb{C}_p)$ denote the $\mathbb{C}_p$-module of all locally constant functions $\mathbb{Z}_p^{\times}\to\mathbb{C}_p$, and let $A$ be a $\mathbb{C}_p$-module. An $A$-valued distribution on $\mathbb{Z}_p^{\times}$ is an $A$-linear homomorphism
\[
	\nu:LC(\mathbb{Z}_p^{\times}, \mathbb{C}_p)\to A,
\]
which we denote by
\[
	\nu(\phi) = \int_{\mathbb{Z}_p^{\times}}\phi d\nu, 
\]
for any $\phi\in LC(\mathbb{Z}_p^{\times}, \mathbb{C}_p)$.

When $A =\mathbb{C}$ these are called \emph{complex distributions}, whereas when $A=\mathbb{C}_p$ they are \emph{$p$-adic distributions}.
\end{definition}

Since $\mathbb{Z}_p^{\times} = \varprojlim(\mathbb{Z}/p^i\mathbb{Z})^{\times}$ is a profinite group, taken with respect to the natural projections $\pi_{ij}:(\mathbb{Z}/p^i\mathbb{Z})^{\times}\to(\mathbb{Z}/p^j\mathbb{Z})^{\times}$ for each $i\geq j$, to any distribution there associates a system of functions $\nu_i:(\mathbb{Z}/p^i\mathbb{Z})^{\times}\to A$ satisfying
\[
	\nu_j(y) = \sum_{x\in \pi_{ij}^{-1}(y)}\nu_i(x), \hspace{20pt} y\in (\mathbb{Z}/p^j\mathbb{Z})^{\times}.
\]
This association works by noting that each $\phi\in LC(\mathbb{Z}_p^{\times}, \mathbb{C}_p)$ factors through some $(\mathbb{Z}/p^i\mathbb{Z})^{\times}$ and by 
\[
	\int_{\mathbb{Z}_p^{\times}}\phi d\nu = \sum_{x\in \mathbb{Z}/p^i\mathbb{Z}}\phi(x)\nu_i(x).
\]
The compatibility criterion of \cite[p. 17]{Panchishkin} tells us when we can run the above process backwards.

\begin{proposition}[(Compatibility criterion)]\label{compat} Consider and arbitrary system of functions $\{\nu_i:(\mathbb{Z}/p^i\mathbb{Z})^{\times}\to A\}_{i=1}^{\infty}$. If we have, for any fixed $j\in\mathbb{Z}$ and any function $\phi_j\in LC((\mathbb{Z}/p^j\mathbb{Z})^{\times}, A)$, that the sum
\[
	\sum_{y\in (\mathbb{Z}/p^i\mathbb{Z})^{\times}}\phi_j(\pi_{ij}(y))\nu_i(y)
\]
is independent of $i$ for large enough $i\geq j$, then there exists a distribution $\nu$ on $\mathbb{Z}_p^{\times}$ associated to $\{\nu_i\}_i$.
\end{proposition}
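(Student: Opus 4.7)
The plan is to construct $\nu$ directly from the compatible system $\{\nu_i\}$, exploiting the fact that every locally constant $\mathbb{C}_p$-valued function on the profinite group $\mathbb{Z}_p^{\times}$ factors through a finite quotient. Given $\phi \in LC(\mathbb{Z}_p^{\times}, \mathbb{C}_p)$, by compactness of $\mathbb{Z}_p^{\times}$ and the local constancy of $\phi$ there exists a minimal $j_0 \geq 1$ such that $\phi$ is constant on each coset of $(1 + p^{j_0}\mathbb{Z}_p) \cap \mathbb{Z}_p^{\times}$; equivalently, for every $i \geq j_0$ one may write $\phi = \phi_i \circ \pi_i$, where $\pi_i : \mathbb{Z}_p^{\times} \twoheadrightarrow (\mathbb{Z}/p^i\mathbb{Z})^{\times}$ is the canonical projection and $\phi_i : (\mathbb{Z}/p^i\mathbb{Z})^{\times} \to \mathbb{C}_p$ is the induced map.

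I would then define
\[
	\nu(\phi) := \sum_{y \in (\mathbb{Z}/p^i\mathbb{Z})^{\times}} \phi_i(y)\,\nu_i(y)
\]
for any $i \geq j_0$. The central task is to check well-definedness, namely that this sum does not depend on the choice of $i \geq j_0$. This is precisely the content of the hypothesis: taking $j = j_0$ and $\phi_{j_0}$ as the locally constant witness on $(\mathbb{Z}/p^{j_0}\mathbb{Z})^{\times}$, and noting $\phi_i(y) = \phi_{j_0}(\pi_{ij_0}(y))$ for $i \geq j_0$, the sum $\sum_y \phi_{j_0}(\pi_{ij_0}(y))\,\nu_i(y)$ is stabilised in $i$ by assumption, so $\nu(\phi)$ is unambiguous.

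The remaining verifications are routine. The $A$-linearity of $\nu$ reduces to the $A$-linearity of each finite sum $\sum_y \phi_i(y) \nu_i(y)$, which is immediate since the $\nu_i(y)$ lie in $A$. To confirm that $\nu$ recovers the original system, one evaluates $\nu$ on the indicator function $\mathbf{1}_{y_0 + p^i\mathbb{Z}_p}$ for any $y_0 \in (\mathbb{Z}/p^i\mathbb{Z})^{\times}$: the definition collapses to $\nu_i(y_0)$, so the system associated to $\nu$ is exactly $\{\nu_i\}$. I do not expect any genuine obstacle here; the only delicate point is correctly identifying the stabilisation index $j_0$ for each $\phi$ and then translating the compatibility hypothesis into independence of $i$, but this is a direct unpacking of the definitions.
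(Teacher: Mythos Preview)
Your construction is correct and is exactly the standard argument: factor a locally constant $\phi$ through a finite quotient, define $\nu(\phi)$ as the finite sum at any sufficiently deep level, and invoke the hypothesis to see that the value is independent of the level chosen. The paper does not actually supply its own proof of this proposition; it simply records the statement and points to \cite[p.~17]{Panchishkin}, so there is nothing to compare against beyond noting that your argument is the expected one.

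One small point worth tightening: in your final paragraph you assert that evaluating $\nu$ on the indicator $\mathbf{1}_{y_0 + p^i\mathbb{Z}_p}$ ``collapses to $\nu_i(y_0)$''. Strictly speaking, the hypothesis only guarantees stabilisation for \emph{large enough} $i \geq j$, so a priori the value of $\nu$ on this indicator is $\sum_{x \in \pi_{i'i}^{-1}(y_0)} \nu_{i'}(x)$ for some $i'$ in the stable range, which need not equal $\nu_i(y_0)$ unless $i$ itself already lies in that range. This does not affect the existence of the distribution, only the precise sense in which it is ``associated to'' the full system $\{\nu_i\}$; in the paper's applications the systems are compatible from the outset, so the issue does not arise.
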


\begin{definition} \label{measure} Let $C(\mathbb{Z}_p^{\times}, \mathbb{C}_p)$ denote the topological $\mathbb{C}_p$-module of all continuous functions $\mathbb{Z}_p^{\times}\to\mathbb{C}_p$. A \emph{$p$-adic measure} is a $\mathbb{C}_p$-module homomorphism
\[
	\nu:C(\mathbb{Z}_p^{\times}, \mathbb{C}_p)\to\mathbb{C}_p.
\]
\end{definition}

So distributions are generally quite easy to define; $p$-adic measures arise from $p$-adic distributions that are $p$-adically bounded. Hence defining a distribution interpolating $L$-values is relatively trivial and showing that these expressions are bounded is the crux of the matter. To do this, we will invoke the abstract Kummer congruences, which criterion is well-known in generality and is due to Katz in \cite[p. 258]{Katz}; we give a specialisation of it.

\begin{proposition}[(Kummer Congruences)]\label{kummer} Suppose, for an index set $I$, that $\{f_i\}_{i\in I}\subseteq C(\mathbb{Z}_p^{\times}, \mathcal{O}_p)$ is such that $\spa_{\mathbb{C}_p}\{f_i\mid i\in I\}$ is dense in $C(\mathbb{Z}_p^{\times}, \mathbb{C}_p)$. For a given system $\{a_i\}_{i\in I}\subseteq\mathcal{O}_p$, there exists an $\mathcal{O}_p$-module homomorphism $\nu:C(\mathbb{Z}_p^{\times}, \mathcal{O}_p)\to\mathcal{O}_p$ such that 
\[
	\int_{\mathbb{Z}_p^{\times}}f_id\nu = a_i
\]
if and only if, for any finite subset $S\subseteq I$ and any system $\{b_i\}_{i\in S}\subseteq\mathbb{C}_p$, the condition
\[
	\sum_{i\in S}b_if_i\subseteq p^N\mathcal{O}_p
\]
for an integer $N$ implies that
\[
	\sum_{i\in S} b_ia_i\in p^N\mathcal{O}_p.
\]
\end{proposition}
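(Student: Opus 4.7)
The plan is to identify the Kummer condition with the norm-boundedness of a natural linear functional defined on a dense subspace, and then to extend that functional by continuity. First I would set $V := \spa_{\mathbb{C}_p}\{f_i\mid i\in I\}$, which by hypothesis is dense in $C(\mathbb{Z}_p^\times, \mathbb{C}_p)$ under the supremum norm $\|\cdot\|_{\sup}$.

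Necessity is essentially tautological: given $\nu$ and a finite sum $g = \sum_{i\in S}b_if_i$ whose values lie in $p^N\mathcal{O}_p$, I would clear $\mathbb{C}_p$-denominators by choosing $m\in\mathbb{Z}$ with $p^mb_i\in\mathcal{O}_p$ for all $i\in S$. Then $p^mg$ lies in $C(\mathbb{Z}_p^\times,p^{N+m}\mathcal{O}_p)$, and $\mathcal{O}_p$-linearity of $\nu$ yields $p^m\sum_i b_ia_i = \nu(p^mg)\in p^{N+m}\mathcal{O}_p$, whence $\sum b_ia_i\in p^N\mathcal{O}_p$.

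For sufficiency, I would define $\tilde\nu:V\to\mathbb{C}_p$ by $\tilde\nu(\sum b_if_i) := \sum b_ia_i$. Well-definedness reduces to showing that if $\sum b_if_i = 0$ then $\sum b_ia_i = 0$: the zero function lies in $p^N\mathcal{O}_p$ for every $N$, so the Kummer hypothesis forces $\sum b_ia_i\in\bigcap_N p^N\mathcal{O}_p = \{0\}$. The crux is to show that $\tilde\nu$ is norm-decreasing, i.e.\ $|\tilde\nu(g)|_p\leq\|g\|_{\sup}$ for all $g\in V$. If $\|g\|_{\sup}=0$ this is immediate; otherwise, since the value group $|\mathbb{C}_p^\times|_p = p^{\mathbb{Q}}$ is divisible and $\|g\|_{\sup}$ is attained on the compact set $\mathbb{Z}_p^\times$, I can pick $\lambda\in\mathbb{C}_p^\times$ and $N\in\mathbb{Z}$ with $|\lambda|_p\cdot\|g\|_{\sup} = p^{-N}$. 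Then $\lambda g$ takes values in $p^N\mathcal{O}_p$, the Kummer condition gives $|\tilde\nu(\lambda g)|_p\leq p^{-N}$, and dividing through by $|\lambda|_p$ yields $|\tilde\nu(g)|_p\leq\|g\|_{\sup}$.

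Finally, density of $V$ together with completeness of $\mathbb{C}_p$ ensures that $\tilde\nu$ extends uniquely by continuity to a norm-decreasing $\mathbb{C}_p$-linear map $\hat\nu:C(\mathbb{Z}_p^\times, \mathbb{C}_p)\to\mathbb{C}_p$. Restricting $\hat\nu$ to $C(\mathbb{Z}_p^\times, \mathcal{O}_p) = \{g:\|g\|_{\sup}\leq 1\}$ produces an $\mathcal{O}_p$-linear map $\nu$ landing in $\mathcal{O}_p$ and satisfying $\nu(f_i) = a_i$ by construction. The only genuinely subtle step, rather than a serious obstacle, is the scaling argument establishing the norm inequality; it rests on the standard fact that the value group of $\mathbb{C}_p$ is $p^{\mathbb{Q}}$, which is dense enough to rescale any finite element of $V$ into the unit ball of $\mathcal{O}_p$-valued functions.
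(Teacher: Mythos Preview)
Your proof is correct and is the standard argument for this result. The paper itself does not supply a proof but simply cites Panchishkin \cite[pp.~19--20]{Panchishkin}; the argument there is precisely the one you give---define the functional on the dense span, use the Kummer hypothesis to check well-definedness and the norm bound $|\tilde\nu(g)|_p\leq\|g\|_{\sup}$, then extend by continuity.
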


The proof of this can be found in \cite[pp. 19--20]{Panchishkin}; it covers $\mathbb{C}_p$-valued measures as well by multiplication of some non-zero constant. An easy example of these criteria is the Fourier coefficients of the Eisenstein series given in the previous section. Recall the finite set of primes $\mathbf{c}$ and polynomials $f_{\sigma, q}\in\mathbb{Z}[t]$ from Proposition \ref{eisenexp}.

\begin{corollary} \label{Sigma} If $m-\frac{1}{2}\in\mathbb{Z}$ and $0<\sigma\in S_+$, then there exists a $p$-adic distribution $\Sigma_{\sigma, m}$ defined on non-trivial elements $\chi\in X_p^{\tors}$ by
\[
	\Sigma_{\sigma, m}(\chi) = \iota_p\left[\prod_{q\in\mathbf{c}}f_{\sigma, q}(\bar{\chi}(q)q^{\frac{n+\delta-1}{2}-m})\right].
\]
Setting $\Sigma_{\sigma} = \Sigma_{\sigma, \frac{1}{2}}$ defines a $p$-adic measure that satisfies
\[
	\int_{\mathbb{Z}_p^{\times}}\chi x_p^{[m]}d\Sigma_{\sigma} = \Sigma_{\sigma, m}(\chi).
\]
\end{corollary}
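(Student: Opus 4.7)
The plan is to expand the defining product for $\Sigma_{\sigma, m}(\chi)$ as a finite sum indexed by positive integers, thereby exhibiting $\Sigma_\sigma$ explicitly as a finite $\mathcal{O}_p$-linear combination of Dirac measures on $\mathbb{Z}_p^{\times}$. All three assertions of the corollary will then follow by direct computation, bypassing the need to invoke Proposition \ref{kummer} in its full generality.

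First I would write $f_{\sigma, q}(t) = \sum_j c_{q, j} t^j \in \mathbb{Z}[t]$ for each $q \in \mathbf{c}$ and expand the product defining $\Sigma_{\sigma, m}(\chi)$ to obtain
\[
\Sigma_{\sigma, m}(\chi) = \sum_{N \in T} A_N\, \bar{\chi}(N)\, N^{(n+\delta-1)/2 - m},
\]
where $T$ is the finite set of positive integers whose prime factorisation is supported on $\mathbf{c}$ with exponents bounded by $\deg f_{\sigma, q}$, and $A_N = \prod_{q} c_{q, j_q}\in\mathbb{Z}$ corresponds to the factorisation $N = \prod_q q^{j_q}$. For non-trivial $\chi\in X_p^{\tors}$ one has $\bar{\chi}(N) = 0$ whenever $\gcd(N, p)>1$, so only $N$ coprime to $p$ contribute; any factor at $q=p$ would collapse to the constant $f_{\sigma, p}(0)$ and be absorbed into the remaining $A_N$'s. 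A short parity check shows that $(n+\delta-1)/2$ and $m$ have the same half-integrality (according to the parity of $n$), so the exponent $(n+\delta-1)/2-m$ is always an integer; hence $N^{(n+\delta-1)/2-m}\in\mathbb{Z}_p^{\times}\subset\mathcal{O}_p$ for $N$ coprime to $p$.

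Next I would use $\bar{\chi}(N) = \chi(N^{-1})$ (inverse taken in $\mathbb{Z}_p^{\times}$) to rewrite the above as the evaluation of $\chi$ against the finite discrete measure
\[
\mu_m := \sum_{\substack{N\in T\\ \gcd(N, p)=1}} A_N\, N^{(n+\delta-1)/2-m}\, \delta_{N^{-1}},
\]
which is manifestly both a distribution and a $p$-adic measure since its weights lie in $\mathbb{Z}\cdot\mathbb{Z}_p^{\times}\subset\mathcal{O}_p$. Setting $\Sigma_\sigma := \mu_{1/2}$ gives the claimed $p$-adic measure, and the interpolation formula will follow from the direct calculation
\[
\int_{\mathbb{Z}_p^{\times}}\chi x_p^{[m]}\, d\Sigma_\sigma = \sum_N A_N\, N^{(n+\delta)/2-1}\,\bar{\chi}(N)\, N^{-[m]} = \sum_N A_N\,\bar{\chi}(N)\, N^{(n+\delta-1)/2-m} = \Sigma_{\sigma, m}(\chi),
\]
using the identity $(n+\delta)/2-1-[m] = (n+\delta-1)/2 - m$.

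The main obstacle is bookkeeping rather than depth: I will need to verify carefully that $(n+\delta-1)/2-m$ is integral in every case, handle the prime $p$ uniformly when $p\in\mathbf{c}$, and keep the identification $\bar{\chi}(N) = \chi(N^{-1})$ consistent. Once these are in hand, the explicit exhibition of $\Sigma_\sigma$ as a finite combination of Diracs with $\mathcal{O}_p$-weights gives the measure property for free, and the interpolation at twists by $x_p^{[m]}$ reduces to the shift of exponents displayed above.
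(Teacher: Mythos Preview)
Your argument is correct, and it is genuinely more concrete than what the paper does. The paper's proof does not exhibit $\Sigma_\sigma$ explicitly; instead it checks the compatibility criterion (Proposition~\ref{compat}) to get the distribution, and then verifies the abstract Kummer congruences of Proposition~\ref{kummer} by appealing to the single fact that the coefficients of each $f_{\sigma,q}$ are independent of~$\chi$. Unwinding that last appeal is essentially your expansion $\sum_N A_N\,\bar\chi(N)\,N^{(n+\delta-1)/2-m}$: once the dependence on $\chi$ is isolated in the factor $\bar\chi(N)N^{-[m]}=(\chi x_p^{[m]})(N^{-1})$, the congruence $\sum_\chi b_\chi\,\Sigma_{\sigma,m}(\chi)\in p^N\mathcal{O}_p$ follows immediately from the hypothesis on $\sum_\chi b_\chi\,\chi x_p^{[m]}$. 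So the two proofs share the same kernel; the difference is that you package it as a finite $\mathcal{O}_p$-linear combination of Dirac measures, from which both the measure property and the interpolation identity are read off directly, whereas the paper routes the same computation through the abstract framework of Propositions~\ref{compat} and~\ref{kummer}.

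What your approach buys is transparency: the measure is written down explicitly, the boundedness is visible (weights in $\mathbb{Z}\cdot\mathbb{Z}_p^\times\subset\mathcal{O}_p$), and the identity $\int\chi x_p^{[m]}\,d\Sigma_\sigma=\Sigma_{\sigma,m}(\chi)$ is a one-line exponent shift. What the paper's approach buys is uniformity with the rest of Section~\ref{interpolation}, where the Kummer-congruences machinery is the workhorse for the harder measures $\nu_f^\pm$; invoking it here keeps the presentation consistent. Your handling of the edge cases (integrality of $(n+\delta-1)/2-m$, and the collapse of a possible factor at $q=p$ to the constant $f_{\sigma,p}(0)$ for non-trivial $\chi$) is correct and worth keeping explicit.
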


\begin{proof} That $\Sigma_{\sigma, m}$ satisfies the compatibility criterion is immediate. By taking $\Re(s)\to\infty$ in the identity (16.46) of \cite{Shimurabook} we see that the product of polynomials has no constant term. Take $\chi x_p^{[m]}$ as the system $\{f_i\}_{i\in I}$ in the statement of the Kummer congruences. If $\mathcal{X}\subseteq X_p^{\tors}$ is a finite subset and $\sum_{\chi\in\mathcal{X}}b_{\chi}\chi x_p^{[m]}\subseteq p^N\mathcal{O}_p$, then 
\[	
	\sum_{\chi\in\mathcal{X}}b_{\chi}\int_{\mathbb{Z}_p^{\times}}\chi x_p^{[m]}d\Sigma_{\sigma}\in p^N\mathcal{O}_p
\]
is immediate, by the crucial fact that the coefficients of $f_{\sigma, q}$ are independent of $\chi$.
\end{proof}

If $\nu(\chi x_p^{[m]}) = a_m(\chi)$ for some $a_m:\{\text{$\mathbb{T}$-valued characters}\}\to\mathbb{C}_p$ and $\omega$ is a primitive $\mathbb{T}$-valued character whose conductor is prime to $p$, then the twist of $\nu$ by $\omega$, given by $[\nu\otimes\omega](\chi x_p^{[m]}) := a_m(\chi\omega)$, is also a $p$-adic measure.

A non-zero Hecke eigenform $f$ with Satake $p$-parameters $(\lambda_{p, 1}, \dots, \lambda_{p, n})$ is \emph{$p$-ordinary} if $|\lambda_0|_p =1$, where recall that $\lambda_0 = p^{\frac{n(n+1)}{2}}\lambda_{p, 1}\cdots\lambda_{p, n}$. As usual, take a half-integral weight $k$, ideals $\mfrak{b}$ and $\mfrak{c}$ satisfying (\ref{BC1}) and (\ref{BC2}), a normalised Hecke character $\psi$ satisfying (\ref{PC1}) and (\ref{PC2}). The main theorem is given as follows.

\begin{theorem}\label{main} Let $p\nmid\mfrak{c}$ be a prime, $k>2n$, and $f\in\mathcal{S}_k(\Gamma[\mfrak{b}^{-1}, \mfrak{bc}], \psi)$ be a $p$-ordinary Hecke eigenform. Assume the existence of $\tau\in S_+$ such that $c_f(\tau, 1)\neq 0$, $c(\tau, 1; f_0)\neq 0$, and recall $\mfrak{t}$ as an integral ideal such that $h^T(2\tau)^{-1}h\in 4\mfrak{t}^{-1}$ for all $h\in\mathbb{Z}^n$. There exist bounded $\mathbb{C}_p$-analytic functions
\begin{align*}
	\nu_f^{\pm}:X_p\to\mathbb{C}_p
\end{align*}
that are uniquely determined by the following. In both cases $\chi\in X_p^{\tors}$ is a primitive Dirichlet character of conductor $p^{\ell_{\chi}}$ with $1\leq \ell_{\chi}\in\mathbb{Z}$; $\eta = \psi\bar{\chi}\rho_{\tau}$; $\mu\in\{0, 1\}$ is chosen so that $(\psi_{\infty}\chi)(-1) = (-1)^{[k]+\mu}$; recall $\hat{\tau} = N(\mfrak{t})(2\tau)^{-1}$; put $\Lambda_{\tau}(s) := (\Lambda_{\mfrak{c}}/\Lambda_{\mfrak{tc}})(\frac{2s-n}{4})$, which is a finite product of Euler factors defined by Section \ref{Lfunsection}, and also put $\mfrak{g}_{\tau}(s) :=~\prod_{q\in\mathbf{b}}g_q((\psi^{\mfrak{c}p}\bar{\chi})(q)q^{-s})^{-1}$, which is a product of polynomials in $\mathbb{Z}[t]$ also defined in Section \ref{Lfunsection}; and recall $d = \frac{n^2}{2}$ if $n$ is even, $d = 0$ if $n$ is odd.
\begin{enumerate}[(i)]
	\item For any $m-\frac{1}{2}\in\mathbb{Z}$ with $n\leq m\leq k-\mu$, the measure $\nu_f^+$ is given by
\begin{align*}
	\int_{\mathbb{Z}_p^{\times}} \chi x_p^{[m]} d\nu_f^+ &= \iota_p\left[\frac{(-1)^{n[k]}|2\tau|^{\frac{n}{2}+\mu}}{i^dN(\mfrak{tbc})^{n\mu}}\left|-\tfrac{N(\sqrt{\mfrak{t}}\mfrak{bc})^2}{2}\hat{\tau}\right|^{-\frac{k+m-\mu-1-2n}{2}}\right. \\
	&\hspace{50pt}\times\left.\frac{p^{n\ell_{\chi}(n+1-k-m)}G_n(\bar{\chi})}{\Lambda_{\tau}(m)\mfrak{g}_{\tau}(m)}\lambda_0^{-\ell_{\chi}}\frac{L_{\psi}(m, f, \bar{\chi})}{\pi^{n(k+m-n)}\langle f, f\rangle}\right],
\end{align*}
whenever $[m]\equiv [k] + \mu\pmod{2}$ (i.e. whenever $m\in\Omega_{n, k}^+$) and $m\neq n+\frac{1}{2}$ (with the further condition that $m\neq n+\frac{3}{2}$ if $(\psi^*\bar{\chi})^2 = 1$ and $n > 1$), otherwise the integral is zero.
	\item For any $m-\frac{1}{2}\in\mathbb{Z}$ with $2n+1-k+\mu\leq m\leq n$, the measure $\nu_f^-$ is given by
\begin{align*}
	\int_{\mathbb{Z}_p^{\times}} \chi x_p^{[m]} d\nu_f^+ &= \iota_p\left[\frac{(-1)^{n[k]}|2\tau|^{\frac{n}{2}+\mu}}{i^dN(\mfrak{tbc})^{n\mu}}\left|-\tfrac{N(\sqrt{\mfrak{t}}\mfrak{bc})^2}{2}\hat{\tau}\right|^{-\frac{k+3m-\mu-2-4n}{2}}\right. \\
	&\hspace{50pt}\times\left.\frac{p^{n\ell_{\chi}(n+1-k-m)}G_n(\bar{\chi})}{\Lambda_{\tau}(m)\mfrak{g}_{\tau}(m)}\lambda_0^{-\ell_{\chi}}\frac{L_{\psi}(m, f, \bar{\chi})}{\pi^{n(k+m-n)}\langle f, f\rangle}\right],
\end{align*}
whenever $[m]\equiv \mu + 1-[k]\pmod{2}$ (i.e. whenever $m\in\Omega_{n, k}^-$), otherwise the integral is zero.
\end{enumerate}
\end{theorem}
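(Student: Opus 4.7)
The plan is to unroll the Rankin--Selberg expression (\ref{f0RS}) at $s=m$ into an explicit closed-form expression for $L_\psi(m,f,\bar\chi)$ in terms of Fourier coefficients of the holomorphically-projected Eisenstein--theta product $\mathbf{Pr}([\theta_{\bar\chi}^\star\mathcal{E}^\star]|U_p^r)$, and then to invoke the Kummer congruences of Proposition \ref{kummer} to upgrade the resulting distribution to a bounded $p$-adic measure using the building-block measures $\Sigma_\sigma$ of Corollary \ref{Sigma}.

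First I would substitute $s=m$ into (\ref{f0RS}), use Corollary \ref{pstablfun} to recover $L_\psi(m,f,\bar\chi)$ on the left, and apply Proposition \ref{tracereduce} to descend the Petersson inner product from level $\mfrak{y}_r$ to the $\chi$-independent level $\mfrak{y}_0$. Combined with the theta transformation formula (\ref{thetatrans}) (exploiting $Y_r=Y_\chi p^{r-\ell_\chi}$), this expresses $\theta_{\bar\chi}|W(Y_r)$ as a Gauss-sum multiple of $\theta_{\bar\chi}^\star$. Since $f_0\in\mathcal{S}_k$, the pairing is unchanged by passing to holomorphic projection. Using $f_0|U_p=\lambda_0 f_0$, together with the fact that $U_p$ becomes self-adjoint (up to a known scalar) against the $W(Y_0)$-twisted pairing, one then moves $U_p^r$ across the pairing to produce the factor $\lambda_0^{-\ell_\chi}$ and reduce the inner product to a ``pick-off-$\tau$-Fourier-coefficient'' functional applied to $\mathbf{Pr}([\theta_{\bar\chi}^\star\mathcal{E}^\star]|U_p^r)$, divided by $c_{f_0}(\tau,1)$.

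Next I would plug in the explicit Fourier formula of Proposition \ref{eisenexp}. This realises $L_\psi(m,f,\bar\chi)$ as a finite sum over pairs $(\sigma_1,\sigma_2)\in V_{p^r\tau}$ of terms proportional to $\chi(|\sigma_1|)|\sigma_1|^\mu\,C_\pm^\star(\sigma_2,m)\,P(\sigma_2,p^r\tau;\beta_\pm)$. The only $\chi$-dependence not already packaged as a finite-order Dirichlet character sits in the Gauss sum $G_n(\bar\chi)$, the factor $\lambda_0^{-\ell_\chi}$, the power $p^{n\ell_\chi(n+1-k-m)}$ absorbed into $C_\pm^\star$ via $Y_r$, and the Euler-factor product $\prod_{q\in\mathbf{c}}f_{\sigma_2,q}(\bar\eta(q)q^{(n+\delta-1)/2-m})$ inside $C_\pm^\star$. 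Matching the collected archimedean constants against the statement of the theorem fixes the candidate values of $\nu_f^\pm$ on $\chi x_p^{[m]}$.

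Boundedness is then verified through Proposition \ref{kummer}. The $\chi$-dependence of the right-hand side decomposes into: (i) a finite linear combination, over $(\sigma_1,\sigma_2)\in V_{p^r\tau}$, of the bounded measures $\Sigma_{\sigma_2}$ of Corollary \ref{Sigma}, twisted by the finite-order tame characters $\chi(|\sigma_1|)$; (ii) the factor $\lambda_0^{-\ell_\chi}$, which is a $p$-adic unit by the $p$-ordinarity hypothesis; and (iii) the product of $G_n(\bar\chi)$ with the compensating power $p^{n\ell_\chi(n+1-k-m)}$, which can itself be realised as integration of $\bar\chi x_p^{[m]}$ against a standard bounded ``Gauss-sum'' measure on $\mathbb{Z}_p^\times$. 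Since twists and products of bounded $p$-adic measures by finite-order characters of conductor prime-to-$p$ remain bounded, the Kummer congruences transfer to $\nu_f^\pm$. The main obstacle is precisely the cancellation in (iii): the exponent of $p$ in the theorem must balance the growth of $|G_n(\bar\chi)|$ \emph{exactly} so that the remaining quantity is $\iota_p$-bounded uniformly in $\ell_\chi$, and it is here that the $p$-ordinarity assumption and the precise normalisations of Sections \ref{tracesection}--\ref{eisensteinsection} are essential.
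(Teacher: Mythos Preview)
Your overall architecture matches the paper's: Rankin--Selberg at $s=m$, trace via Proposition \ref{tracereduce}, theta transformation (\ref{thetatrans}), holomorphic projection, the explicit Fourier expansion of Proposition \ref{eisenexp}, and finally Kummer congruences built out of the $\Sigma_{\sigma}$ of Corollary \ref{Sigma}. Two steps, however, are not carried out the way you describe and the second of them is a real gap.

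First, the Petersson pairing does \emph{not} reduce to the single ``pick-off-$\tau$-Fourier-coefficient'' functional. The paper packages $\langle f_0, g|W(Y_0)\rangle/\langle f,f\rangle$ as an abstract linear functional $\ell_f$ and then invokes \cite[(3.52)]{Panchishkin} to write $\ell_f(g)=\sum_{i=1}^t\beta_i c_g(\sigma_i,1)$ for finitely many $\sigma_i\in S_+^{\triangledown}$ and algebraic $\beta_i$; one then runs the Kummer-congruence argument for each $c_{\sigma_i,r,m}^{\pm}(\chi)$ separately. Your single-$\tau$ reduction would require an identity relating $\langle f_0,\,\cdot\,|W(Y_0)\rangle$ to the $\tau$th coefficient alone, and no such identity is available here.

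Second, and more seriously, your item (iii) tries to control the Gauss sum $G_n(\bar\chi)$ directly by realising $G_n(\bar\chi)\,p^{n\ell_\chi(n+1-k-m)}$ as integration against a ``standard bounded Gauss-sum measure''. The paper avoids this entirely. It defines the distribution in two compatible ways (Proposition \ref{distplus}): on primitive $\chi$ by the $L$-value formula (\ref{plusdistprim}), and in general by the Rankin--Selberg Dirichlet series $D\bigl(\tfrac{2s-3n-2}{4},f_0,\theta_{\chi_\ell}^{\star}|W(Y_r)\bigr)$ as in (\ref{plusdistimp}). It is the second description that is used to check boundedness: unfolding it via \cite[(8.5)]{Shimuraint} and Proposition \ref{tracereduce} gives (\ref{distplusreduce}) and then (\ref{artificialplus}), in which the only $\chi$-dependence inside $\ell_f(\mfrak{R}_r^{\pm})$ is through the values $\chi_\ell(|\sigma_1|)$ and the $\bar\eta$-argument of the polynomials $f_{\sigma,q}$. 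The Gauss sum has already been absorbed by the theta transformation and appears only in the interpolation formula, not in the quantity being bounded. The Kummer congruences (\ref{congruence1})--(\ref{congruence3}) then go through because $\chi_\ell(|\sigma_1|)\in\mathcal{O}_p^{\times}$, $|\lambda_0|_p=1$, and the $\Sigma_{\sigma_2}\otimes\omega_\tau$ are bounded. Without this two-definition trick, your step (iii) would have to establish uniform $p$-adic bounds on $G_n(\bar\chi)$ against the compensating power of $p$, which is delicate and is precisely what the paper's reformulation is designed to sidestep.
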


\begin{remark} The condition that $m\neq n+\frac{1}{2}$ (and $m\neq n+\frac{3}{2}$ when $(\psi^*\bar{\chi})^2 = 1$) arises as a result of complications in the Fourier expansion of the Eisenstein series at this value, as seen in the previous section. It is unique to the half-integral weight case since $n+\frac{1}{2}$ does not belong to the set of special values when $k\in\mathbb{Z}$. Most likely it can still be interpolated since one can use the Kubota-Leopoldt measure to interpolate the extra Fourier coefficients arising here, but it is not necessary in order to give the existence of the measure.
\end{remark}

The $p$-adic Mellin transform of a $p$-adic measure is defined by
\[
	L_{\nu}(x) := \int_{\mathbb{Z}_p^{\times}} xd\nu
\]
for any $x\in X_p$.

\begin{definition}\label{padiclfun} Let $f\in \mathcal{S}_k(\Gamma, \psi)$ be a $p$-ordinary Hecke eigenform. The $p$-adic $L$-functions of $f$ are defined by:
\[
	\mathcal{L}_p^{\pm}(s, f, \chi) := L_{\nu_f^{\pm}}(\chi x_p^{s-\frac{1}{2}}) = \int_{\mathbb{Z}_p^{\times}}\chi x_p^{s-\frac{1}{2}}d\nu_f^{\pm}.
\]
\end{definition}

\subsection{Proof of Theorem \ref{main}}

\noindent The proof of the main theorem now follows along the following lines: prove the existence of $p$-adic distributions interpolating $L_{\psi}(m, f, \bar{\chi})$ and show that they satisfy the Kummer congruences, thus defining $p$-adic measures. Though it is enough to define the $p$-adic distribution in terms of non-trivial primitive characters $\chi\in X_p^{\tors}$, to use the Kummer congruences we need \emph{all} characters in $X_p^{\tors}$ and we achieve this by lifting the undesirable primitive characters (i.e. the trivial character) into desirable imprimitive characters. The definition of the distribution on primitive characters is similar to that seen in Theorem \ref{main}. For imprimitive characters, we cannot use the transformation formula of the theta series (\ref{thetatrans}), instead we define it in terms of the Rankin-Selberg Dirichlet series
\[
	D(s, f, g) := \sum_{\sigma\in S_+/GL_n(\mathbb{Z})}\nu_{\sigma}^{-1}c_f(\sigma, 1)\overline{c_g(\sigma, 1)}|\sigma|^{-s-\frac{k-\ell}{2}},
\]
where $f\in\mathcal{M}_k$, $g\in\mathcal{M}_{\ell}$, and $\nu_{\sigma} := \#\{a\in GL_n(\mathbb{Z})\mid a^T\sigma a=\sigma\}$. This Rankin-Selberg Dirichlet series has an integral expression similar to (\ref{RS}) -- in fact it is used as an intermediary step in the proof of (\ref{RS}) -- and the flexibility in choice of $g$ allows us to pre-empt the right-hand side of the transformation formula (\ref{thetatrans}).

\begin{proposition}\label{distplus} There exists a complex distribution $\nu_s^+$ on $\mathbb{Z}_p^{\times}$ which is uniquely determined on Dirichlet characters of $p$-power conductor $p^{\ell_{\chi}}$ as follows. If $\chi$ is primitive and $1\leq \ell_{\chi}\in\mathbb{Z}$, then it is defined by
\begin{align}
	\begin{split}
	\nu_s^+(\chi) :&=\frac{(-1)^{n[k]}|2\tau|^{\frac{n}{2}+\mu}}{i^dN(\mfrak{tbc})^{n\mu}}\left|-\tfrac{N(\sqrt{\mfrak{t}}\mfrak{bc})^2}{2}\hat{\tau}\right|^{\frac{-k+s-\mu-1-2n}{2}}\\
	&\hspace{50pt}\times\frac{p^{-n\ell_{\chi}(n+1-k-s)}G_n(\bar{\chi})}{\Lambda_{\tau}(s)\mfrak{g}_{\tau}(s)}\lambda_0^{-\ell_{\chi}}L_{\psi}(s, f, \bar{\chi}),
	\end{split}
	\label{plusdistprim}
\end{align}
where $d$, $\Lambda_{\tau}$ and $\mfrak{g}_{\tau}$ are as in Theorem \ref{main}.

In general, for any $\ell>\ell_{\chi}\geq 0$, let $\chi_{\ell}$ denote the character modulo $p^{\ell}$ associated to $\chi$ and, for any $r>\ell$, define
\begin{align}
	\begin{split}
	\nu_s^+(\chi):&=\frac{|\tau|^{\frac{s-n-1+k+\mu}{2}}}{c_{f_0}(\tau, 1)}\left|-\tfrac{N(\sqrt{\mfrak{t}}\mfrak{bc})^2}{2}\hat{\tau}\right|^{-\frac{k+s-\mu-1-2n}{2}}\Lambda_{\mfrak{y}_0}(\tfrac{2s-n}{4})\\
	&\hspace{50pt}\times p^{rn(\frac{5n}{2}+2-2k-s)}\lambda_0^{-r}D\left(\tfrac{2s-3n-2}{4}, f_0, \theta_{\chi_{\ell}}^{\star}|W(Y_r)\right), 
	\end{split}\label{plusdistimp}
\end{align}
where, recall, $\theta_{\chi_{\ell}}^{\star}(z) = \theta_{\chi_{\ell}}^{(\mu)}(N(\sqrt{\mfrak{t}}\mfrak{bc})^2z/2; \hat{\tau})$.
\end{proposition}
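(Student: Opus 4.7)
\noindent The plan has three stages: (i) show that the right-hand side of (\ref{plusdistimp}) is independent of the auxiliary integers $r$ and $\ell$; (ii) show that when $\chi$ is non-trivial and primitive, (\ref{plusdistimp}) recovers the closed form (\ref{plusdistprim}); and (iii) invoke Proposition \ref{compat} to assemble the resulting function on $X_p^{\tors}$ into a complex distribution on $\mathbb{Z}_p^{\times}$.

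\noindent Stage (i) rests on the $U_p$-eigenproperty $f_0|U_p = \lambda_0 f_0$ of Proposition \ref{pstabprop} together with the trace relation of Proposition \ref{tracereduce}. Recall that Shimura's derivation of (\ref{RS}) passes through a Dirichlet series identity linking $D(\cdot, f_0, g)$ to a Petersson pairing $\langle f_0, g\mathcal{E}\rangle$ up to explicit factors. Rewriting the Dirichlet series in (\ref{plusdistimp}) as such a pairing at level $\mfrak{y}_r$ and descending via Proposition \ref{tracereduce} introduces a factor of $U_p^rW(Y_0)$; moving $U_p^r$ across the pairing using the eigenproperty replaces it by $\lambda_0^r$, which is cancelled by the prefactor $\lambda_0^{-r}$, while the scale change generated by $W(Y_r)$ absorbs into the explicit $p^{rn(\frac{5n}{2}+2-2k-s)}$. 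Independence from $\ell$ (for $\ell > \ell_{\chi}$) is similar: raising $\ell$ by one alters the imprimitive theta series $\theta_{\chi_{\ell}}$ by a single Euler factor at $p$, which compensates against a matching adjustment inside the Rankin-Selberg series.

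\noindent For stage (ii), take $\chi$ primitive of conductor $p^{\ell_{\chi}}\geq 1$ and choose admissible $\ell > \ell_{\chi}$ and $r > \ell$. Starting from the Rankin-Selberg expression (\ref{f0RS}) for $L_{\psi}(s, f_0, \bar{\chi})$ at level $\mfrak{y}_r$, I would apply Proposition \ref{tracereduce} to descend the pairing to level $\mfrak{y}_0$, use the theta transformation formula (\ref{thetatrans}) to convert $\theta_{\bar{\chi}}|W(Y_r)$ into the starred form $\theta_{\chi}^{\star}$ (the residual $W(Y_r)/W(Y_{\chi})$-factor being absorbed into $U_p^{r-\ell_{\chi}}$ acting on $f_0$), and invoke Corollary \ref{pstablfun} to swap $L_{\psi}(s, f_0, \bar{\chi})$ for $L_{\psi}(s, f, \bar{\chi})$. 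After rearrangement, the Gauss sum $G_n(\bar{\chi})$, the constant $i^dN(\mfrak{tbc})^{n\mu}/|2\tau|^{n/2+\mu}$, and the power $p^{-\ell_{\chi}n^2/2}$ produced by (\ref{thetatrans}) line up with the matching factors in (\ref{plusdistprim}); the ratio $\Lambda_{\mfrak{c}_0}/\Lambda_{\mfrak{y}_0}$ collapses to $\Lambda_{\tau}(s)^{-1}$ and the $g_q$-product to $\mfrak{g}_{\tau}(s)^{-1}$ after one accounts for the $p$-Euler factors that appear in passing from $\mfrak{c}$ to $\mfrak{c}_0$. Stage (iii) is then immediate: the values of $\nu_s^+$ on $X_p^{\tors}$ dualize, via Fourier inversion on the finite groups $(\mathbb{Z}/p^i\mathbb{Z})^{\times}$, into a system $\{\nu_i\}$ whose compatibility follows from the $r$-independence proved in stage (i) together with orthogonality of characters.

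\noindent The main obstacle will be the bookkeeping in stage (ii). Every factor of $\pi$, $\Gamma_n$, $N(\mfrak{b})$, $N(\mfrak{t})$, $|\tau|$, $Y_r$, the sign $(-1)^{n[k]}$, and the various Euler products at $p$ must be tracked through three successive transformations -- trace reduction, theta transformation, and $L$-function renormalisation -- and the precise exponents appearing in (\ref{plusdistprim}), for instance $n\ell_{\chi}(n+1-k-s)$ and $|\cdot|^{(-k+s-\mu-1-2n)/2}$, emerge only after a careful tally. Reconciling the factor $W(Y_0)$ supplied by Proposition \ref{tracereduce} against the factor $W(Y_{\chi})$ naturally paired with (\ref{thetatrans}) requires particular care, and the subtle difference between $\Lambda_{\mfrak{c}_0}$ and $\Lambda_{\mfrak{c}}$ at $p$ must be traced explicitly to the Satake parameters $\lambda_{p, 1}, \dots, \lambda_{p, n}$ via the definition of $\lambda_0$. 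Nothing here is conceptually deep, but the accounting is unforgiving.
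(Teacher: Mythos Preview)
Your three-stage plan is broadly sound, but the paper's actual proof is considerably shorter and takes a more direct route, and one of your claims is off.

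\textbf{Independence of $\ell$.} You propose that increasing $\ell$ by one changes $\theta_{\chi_{\ell}}$ by an Euler factor at $p$ which then cancels inside the Rankin--Selberg series. This is not what happens. Once $\ell>\ell_{\chi}$, the lifted character $\chi_{\ell}$, viewed as a function on integers, is identical for every such $\ell$: it equals $\chi$ on integers prime to $p$ and vanishes on multiples of $p$ (this holds equally when $\ell_{\chi}=0$, where $\chi_{\ell}$ is the principal character modulo $p^{\ell}$). Hence $\theta_{\chi_{\ell}}^{\star}$ does not depend on $\ell$ at all, and the paper simply says the expression is ``evidently independent of $\ell$ since $\ell>\ell_{\chi}$''. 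Your Euler-factor mechanism would be needed if one were comparing the primitive theta series with the imprimitive one, but that is not what is at stake here.

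\textbf{Independence of $r$.} You pass through the Petersson pairing and invoke Proposition~\ref{tracereduce} to move $U_p^r$ across. The paper stays entirely at the level of the Dirichlet series $D$. It factors the Atkin--Lehner operator as $W(Y_r)=W(Y_{\ell})V(p^{r-\ell})$, where $V(M)$ is the dilation operator $g\mapsto M^{n(\text{weight})}g(M^2z)$, and then unfolds $D(\tfrac{2s-3n-2}{4},f_0,\theta_{\chi_{\ell}}^{\star}|W(Y_r))$ directly in Fourier coefficients. The dilation produces $c_{f_0}(p^{2(r-\ell)}\sigma,1)$, which is rewritten as $p^{n(n+1-k)(\ell-r)}c(\sigma,1;f_0|U_p^{r-\ell})$; the explicit power of $p$ absorbs the prefactor $p^{rn(\frac{5n}{2}+2-2k-s)}$, and $f_0|U_p^{r-\ell}=\lambda_0^{r-\ell}f_0$ kills the remaining $\lambda_0^{-r}$. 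No inner product, no trace, no $W(Y_0)$: just a coefficient-by-coefficient bookkeeping in the Dirichlet series. Your approach would also work in principle, but it drags in the machinery of Section~\ref{tracesection} where none is needed.

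\textbf{Stage (ii).} The paper does \emph{not} include the agreement of (\ref{plusdistprim}) and (\ref{plusdistimp}) in the primitive case as part of the proof of the proposition. It is relegated to a Remark immediately afterwards, where the argument you sketch (the identities relating $D$ to $L_{\psi}$, Corollary~\ref{pstablfun}, the theta transformation (\ref{thetatrans}) together with $G_n(\chi)^{-1}=\chi(-1)^n p^{-n^2\ell_{\chi}}G_n(\bar{\chi})$, and the $r$-independence manipulations) is named but not carried out. So the elaborate accounting you anticipate in stage~(ii) is real, but it is not required to establish the proposition as stated: the proposition only asserts existence of a distribution, and for that the compatibility criterion plus $(\ell,r)$-independence of (\ref{plusdistimp}) suffice.
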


\begin{proof} By the compatibility criterion, Proposition \ref{compat}, we just need to show that the definition of $\nu_s$ is independent of $\ell$ and $r$. When $\chi$ is primitive, this is immediate. The expression (\ref{plusdistimp}) is evidently independent of $\ell$ since $\ell>\ell_{\chi}$. Now fix $\ell$, to show independence of $r$ let $V(M)$ for $M\in\mathbb{Z}$ be the operator associated to $\begin{psmallmatrix} MI_n & 0 \\ 0 & M^{-1}I_n\end{psmallmatrix}$, which acts as $g|V(M) = M^{n\ell}g(M^2z)$ if $g$ is of weight $\ell$. Notice $W(Y_r) = W(Y_{\ell})V(p^{r-\ell})$ as operators, so the Dirichlet series $D(\frac{2s-3n-2}{4}, f_0, \theta_{\chi_{\ell}}^{\star}|W(Y_r))$ becomes
\[
	p^{n(\ell-r)(\frac{n}{2}+\mu)}\sum_{\sigma\in S_+/GL_n(\mathbb{Z})}\nu_{\sigma}^{-1}c_{f_0}(p^{2r-2\ell}\sigma, 1)\overline{c\left(\sigma, 1; \theta_{\chi_{\ell}}^{\star}|W(Y_{\ell})\right)}|p^{2(\ell-r)}\sigma|^{-\frac{s-n-1+k+\mu}{2}}.
\]
We have $c_{f_0}(p^{2r-2\ell}\sigma, 1) = p^{n(n+1-k)(\ell-r)}c(\sigma, 1; f_0|U_p^{r-\ell})$ and so the powers of $p^r$ in (\ref{plusdistimp}) cancel. Since $f_0|U_p = \lambda_0 f_0$, the proposition is proved.
\end{proof}

\begin{remark} Through the identities \cite[(5.9b, 8.8)]{Shimuraint} relating $D(s, f_0, g)$ to $L_{\psi}(s, f_0, \chi)$, Corollary \ref{pstablfun}, the transformation formula (\ref{thetatrans}) with the fact that $G_n(\chi)^{-1} = \chi(-1)^n p^{-n^2\ell_{\chi}}G_n(\bar{\chi})$, and the manipulations on $D(s, f, g)$ found in the above proof, one can check that the two definitions, (\ref{plusdistprim}) and (\ref{plusdistimp}), coincide if $\chi$ is primitive (i.e. when $\ell = \ell_{\chi}$).
\end{remark}

\begin{proposition} If $k>2n$ then, for any Dirichlet character $\chi$ of $p$-power conductor and $m\in\Omega_{n, k}^+$, we have
\[
	\frac{\nu_m^+(\chi)}{\pi^{n(k+m-n)}\langle f, f\rangle}\in\overline{\mathbb{Q}}.
\]
\end{proposition}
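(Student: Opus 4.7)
The strategy is to convert $\nu_m^+(\chi)$, in both the primitive and imprimitive cases, into an algebraic multiple of $\langle f_0, h_\chi\rangle/\langle f,f\rangle$ for a form $h_\chi$ with $\overline{\mathbb{Q}}$-Fourier coefficients, then invoke Shimura's algebraicity results for half-integral weight Siegel modular forms (\cite[Chs.~16--17]{Shimurabook}). First I would observe that every explicit scalar in the primitive definition (\ref{plusdistprim}) lies in $\overline{\mathbb{Q}}$: the Gauss sum $G_n(\bar\chi)$ is cyclotomic; the Euler products $\Lambda_\tau(m)$ and $\mfrak{g}_\tau(m)$ are finite products of local factors at algebraic arguments; $\lambda_0^{-\ell_\chi}$ is algebraic because the Satake parameters of the algebraically defined Hecke eigenform $f$ are; and the remaining factors $i^d$, $|2\tau|^{n/2+\mu}$, $|\hat\tau|^{\cdot}$, $N(\mfrak{tbc})^{\cdot}$, $p^{\cdot}$ are rational. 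So the primitive case reduces to the statement $L_\psi(m, f, \bar\chi)/(\pi^{n(k+m-n)}\langle f, f\rangle) \in \overline{\mathbb{Q}}$. The imprimitive definition (\ref{plusdistimp}) admits the same reduction once $D(\frac{2m-3n-2}{4}, f_0, \theta_{\chi_\ell}^\star|W(Y_r))$ is rewritten as a Petersson inner product via the Rankin-Selberg integral representation of $D(s, f_0, g)$ used in the proof of (\ref{RS}) (see \cite[(5.9b)]{Shimuraint}).

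Next, I would apply Proposition \ref{tracereduce} to write this inner product as $\langle f_0, H|U_p^rW(Y_0)\rangle_{\mfrak{y}_0}$ with $H = \theta_{\chi_\ell}^\star\mathcal{E}^\star|W(Y_r)$. Since $f_0$ is a holomorphic cusp form, this equals $\langle f_0, \mathbf{Pr}(\theta_{\chi_\ell}^\star\mathcal{E}^\star|U_p^r)|W(Y_0)\rangle_{\mfrak{y}_0}$ by the defining property of holomorphic projection --- applicable on the critical line $\Omega_{n,k}^+$ with the values $m = n+\tfrac12$ and $m = n+\tfrac32$ (when $(\psi^*\bar\chi)^2 = 1$) excised as in Proposition \ref{eisenexp}. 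That proposition describes the Fourier coefficients of $\mathbf{Pr}(\theta_{\chi_\ell}^\star\mathcal{E}^\star|U_p^r)$ as $\overline{\mathbb{Q}}$-combinations of the common factor $C^\star_+(\sigma, m) = \pi^{n(m+k-n-\mu)/2}\Gamma_n(\tfrac{m+k-n-\mu}{2})^{-1}\alpha_{\sigma, m}$ with $\alpha_{\sigma, m}\in\overline{\mathbb{Q}}$ (the polynomials $f_{\sigma, q}$ have integer coefficients). Because $\Gamma_n$ evaluated at integer or half-integer arguments is a $\overline{\mathbb{Q}}^\times$-multiple of an explicit rational power of $\pi$ (via the Legendre duplication formula), we may extract a total transcendence $\pi^{\gamma_m}$ with $\gamma_m\in\tfrac12\mathbb{Z}$ explicit so that $\mathbf{Pr}(\theta_{\chi_\ell}^\star\mathcal{E}^\star|U_p^r) = \pi^{\gamma_m} h_\chi$ for some $h_\chi\in\mathcal{M}_k(\Gamma_0, \psi)\otimes\overline{\mathbb{Q}}$. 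Shimura's theorem then gives $\langle f_0, h_\chi|W(Y_0)\rangle_{\mfrak{y}_0}/\langle f_0, f_0\rangle\in\overline{\mathbb{Q}}$ --- the form $f_0$ itself has algebraic Fourier coefficients by (\ref{coeffsum}) --- and a final application of the $p$-stabilisation expansion (\ref{pstabform}) together with the $\overline{\mathbb{Q}}$-self-adjointness of the Hecke operators $T_m$ and $U_p$ produces $\langle f_0, f_0\rangle/\langle f, f\rangle\in\overline{\mathbb{Q}}$.

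The main obstacle is the bookkeeping of $\pi$-powers: one must verify that $\pi^{\gamma_m}$ from the projection, combined with the prefactor $|4\pi\tau|^{(m-n-1+k+\mu)/2}$ and the inverse Gamma factor $\Gamma_n(\tfrac{m-n-1+k+\mu}{2})^{-1}$ from (\ref{RS}), consolidates to exactly $\pi^{n(k+m-n)}$ modulo $\overline{\mathbb{Q}}^\times$. This is routine but tedious, and is the only computational (rather than structural) input required for the proof.
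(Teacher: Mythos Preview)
Your proposal is correct and follows the same underlying strategy as the paper, though the paper's proof is far terser: for primitive $\chi$ it simply invokes the known algebraicity theorem for the standard $L$-value (Theorem~7.8 of \cite{Mercuri2}, equivalently \cite[Theorem~28.8]{Shimurabook}) together with $\langle f,f\rangle\in\mu(\Lambda,k,\psi)\overline{\mathbb{Q}}$, and for imprimitive $\chi$ it unfolds $D(s,f_0,g)$ into the Petersson integral (\ref{distplusintexp}) via \cite[(8.5)]{Shimuraint} and then appeals to Proposition~7.5 and Theorem~7.6 of \cite{Mercuri2}. Your sketch essentially reproves those cited results in place --- the holomorphic projection, the Fourier analysis from Proposition~\ref{eisenexp}, the ratio $\langle f_0,f_0\rangle/\langle f,f\rangle\in\overline{\mathbb{Q}}$, and the $\pi$-power bookkeeping --- so the mathematical content is identical. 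One small remark: you insert the trace reduction of Proposition~\ref{tracereduce} here, but the paper works directly at level $\mfrak{y}_r$ for this proposition and only brings in the trace later, when actually assembling the measure; your detour is harmless but unnecessary at this stage.
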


\begin{proof} In Theorem 7.6 of \cite{Mercuri2} we showed the existence of a non-zero constant $\mu(\Lambda, k, \psi)$ through which the Petersson inner product, and subsequently the $L$-value, satisfied an algebraicity result. Plugging $g = f$ into that theorem of \cite{Mercuri2} gives $\langle f, f\rangle \in\mu(\Lambda, k, \psi)\overline{\mathbb{Q}}$. So whenever $\chi$ is primitive, this is immediate from the main theorem, Theorem 7.8, of \cite{Mercuri2}. This is also given in \cite[Theorem 28.8]{Shimurabook}.

If $\chi$ is not primitive, then use the unfolded integral expression \cite[(8.5)]{Shimuraint} of $D(s, f, g)$ to obtain the expression
\begin{align}
	\begin{split}
		\nu_m^+(\chi) &= \left[2c_{f_0}(\tau, 1)\Gamma_n\left(\tfrac{m-n-1+k+\mu}{2}\right)\right]^{-1} \left|-\tfrac{N(\sqrt{\mfrak{t}}\mfrak{bc})^2}{2}\hat{\tau}\right|^{-\frac{k+m-\mu-1-2n}{2}}\\
	&\hspace{10pt}\times N(\mfrak{b})^{n(n+1)} |4\pi\tau|^{\frac{m-n-1+k+\mu}{2}}p^{rn(\frac{5n}{2}+2-2k-m)}\lambda_0^{-r} \\
	&\hspace{10pt}\times\left\langle f_0, \theta_{\chi_{\ell}}^{\star}|W(Y_r)\mathcal{E}(z, \tfrac{2m-n}{4}; k-\tfrac{n}{2}-\mu, \bar{\eta}, \Gamma_r)\right\rangle_{\mfrak{y}_r}V_r.
	\end{split}\label{distplusintexp}
\end{align}
Repeating the process of proving the algebraicity of $L$-values in \cite{Mercuri2} -- applying Proposition 7.5 with $g = \theta_{\chi_{\ell}}^{\star}|W(Y_r)$ and Theorem 7.6 found in that paper -- proves the proposition in this case too.
\end{proof}

By the above proposition we can define a $p$-adic distribution $\nu_m^{0+}$ for all $m-\frac{1}{2}\in\mathbb{Z}$ with $n\leq m\leq k-\mu$ by putting
\[
	\nu_m^{0+}(\chi) := \iota_p\left[\frac{\nu_m^+(\chi)}{\pi^{n(k+m-n)}\langle f, f\rangle}\right],
\]
for $m\in\Omega_{n, k}^+\back\{n+\frac{1}{2}\}$ (i.e. whenever $(\psi_{\infty}\chi)(-1) = (-1)^{[m]}$), and by otherwise putting $\nu_m^{0+}(\chi) = 0$ (and moreover $\nu_{n+\frac{3}{2}}^{0+}(\chi) = 0$ if $n>1$ and $(\psi^*\bar{\chi})^2 = 1$). 
To make the following expressions more manageable, we collect superfluous terms into a constant $C_r$, independent of $\chi$, as follows:
\begin{align*}
	C_{r} :&= (-1)^{n(\left[k-\frac{n}{2}-\mu\right])}\left[\pi^{n(k+m-n)}2c_{f_0}(\tau, 1)\Gamma_n\left(\tfrac{m+k+\mu-n-1}{2}\right)\right]^{-1}N(\mfrak{b})^{n(n+1)} \\
	&\hspace{10pt}\times|4\pi\tau|^{\frac{m+k+\mu-n-1}{2}} p^{rn(\frac{5n}{2} + 2-2k-m)}V_r.
\end{align*}
The factor of $(-1)$ appears as a result of $\theta_{\chi_{\ell}}^{\star}|W(Y_r)^2$ in the following calculation. Combining the integral expression (\ref{distplusintexp}) above with the case $g = \theta_{\chi_{\ell}}^{\star}|W(Y_r)$, for large enough $r$, of Proposition \ref{tracereduce}, we get
\begin{align}
	\nu_m^{0+}(\chi) = \iota_p\left[C_r\left|-\tfrac{N(\sqrt{\mfrak{t}}\mfrak{bc})^2}{2}\hat{\tau}\right|^{-\frac{k+m-\mu-1-2n}{2}}\lambda_0^{-r}\frac{\langle f_0, [\theta_{\chi_{\ell}}^{\star}\mathcal{E}^{\star}]|U_p^rW(Y_0)\rangle_{\mfrak{y}_0}}{\langle f, f\rangle}\right], \label{distplusreduce}
\end{align} 
where recall $\mathcal{E}^{\star}(z) = \mathcal{E}(z, \tfrac{2m-n}{4}; k-\tfrac{n}{2}-\mu, \bar{\eta}, \Gamma_r)|W(Y_r)$.
In light of the Fourier expansion in Proposition \ref{eisenexp} we make one final, artificial adjustment to this expression by inserting a constant $D_k$. Define
\begin{align*}
	D_k :&= \frac{i^{n(\left[k-\frac{n}{2}-\mu\right])}2^{-n(k-\mu+\frac{3}{2})}}{N(\mfrak{b}^2\mfrak{y}_r)^{n\left(\frac{3n-2m}{2}-k+\mu\right)}\pi^{\frac{n(m+k-n-\mu)}{2}}}\Gamma_n\left(\tfrac{m+k-n-\mu}{2}\right), \\
	\mfrak{R}_r^+(\cdot, m; \chi_{\ell}) :&= D_k\left|-\tfrac{N(\sqrt{\mfrak{t}}\mfrak{bc})^2}{2}\hat{\tau}\right|^{-\frac{k+m-\mu-1-2n}{2}}\mathbf{Pr}\left([\theta_{\chi_{\ell}}^{\star}\mathcal{E}^{\star}]|U_p^r\right),
\end{align*}
which latter is an element of $\mathcal{M}_k(\Gamma[\mfrak{b}^{-2}, \mfrak{b}^2\mfrak{y}_{\chi}], \psi)$ (which is true for all $m\in\Omega_{n, k}$). For the values of $m\in\Omega_{n, k}^+$ given in Proposition \ref{eisenexp} (which are those upon which our distribution $\nu_m^{0+}$ is non-zero), it has cyclotomic Fourier coefficients that are non-zero only when $\sigma>0$ at which point, by Proposition \ref{eisenexp}, they are
\begin{align}
	\begin{split}
	c(\sigma, 1; \mfrak{R}_r^+(\cdot, m;\chi_{\ell})) &= \sum_{(\sigma_1, \sigma_2)\in V_{p^r\sigma}}\chi_{\ell}(|\sigma_1|)|\sigma_1|^{\mu}\mfrak{C}_+^{\star}(\sigma_2, m)P\left(\sigma_2, p^r\sigma; \tfrac{k-m-\mu}{2}\right), \\
	\mfrak{C}_+^{\star}(\sigma_2, m) :&= \left|-\tfrac{N(\sqrt{\mfrak{t}}\mfrak{bc})^2}{2}\hat{\tau}\right|^{-\frac{k+m-\mu-1-2n}{2}}|\sigma_2|^{m-n-\frac{1}{2}}\\
	&\hspace{10pt}\times\prod_{q\in\mathbf{c}}f_{\sigma, q}(\bar{\eta}(q)q^{\frac{n+\delta-1}{2}-m}).
	\end{split}\label{Rplus}
\end{align}
Insertion of $D_k$ to the expression (\ref{distplusreduce}) above leaves us with
\begin{align}
	\nu_m^{0+}(\chi) = \iota_p\left[C_rD_k^{-1}\lambda_0^{-r}\frac{\langle f_0, \mfrak{R}_r^+(\cdot, m;\chi_{\ell})|W(Y_0)\rangle_{\mfrak{y}_0}}{\langle f, f\rangle}\right]. \label{artificialplus}
\end{align}

\paragraph{The distribution $\nu_s^-$.} To define $\nu_s^-$ we just replace $\left|-N(\sqrt{\mfrak{t}}\mfrak{bc})^2\hat{\tau}/2\right|^{-\frac{k+s-\mu-1-2n}{2}}$ in the definitions (\ref{plusdistprim}) and (\ref{plusdistimp}) with $\left|-N(\sqrt{\mfrak{t}}\mfrak{bc})^2\hat{\tau}/2\right|^{-\frac{k+3s-\mu-2-4n}{2}}$. As before, for $m-\frac{1}{2}\in\mathbb{Z}$ with $2n+1-k+\mu\leq m\leq n$ normalise this into the following $p$-adic distribution
\begin{align*}
	\nu_m^{0-}(\chi) := \iota_p\left[\frac{\nu_m^-(\chi)}{\pi^{n(k+m-n)}\langle f, f\rangle}\right]. 
\end{align*}
whenever $m\in\Omega_{n, k}^-$ and $\nu_m^{0-}(\chi) = 0$ otherwise.

Define $\mfrak{R}_r^{-} := \left|-N(\sqrt{\mfrak{t}}\mfrak{bc})^2\hat{\tau}/2\right|^{-(m-n-\frac{1}{2})}\mfrak{R}_r^+$, this has cyclotomic Fourier coefficients that are non-zero only when $\sigma>0$. In such a case they are given, when $m\in\Omega_{n, k}^-$, by
\begin{align*}
	c(\sigma, 1; \mfrak{R}_r^-(\cdot, m; \chi_{\ell})) &= \sum_{(\sigma_1, \sigma_2)\in V_{p^r\sigma}}\chi_{\ell}(|\sigma_1|)|\sigma_1|^{\mu}\mfrak{C}_-^{\star}(\sigma_2, m)P\left(\sigma_2, p^r\sigma; \tfrac{k+m-\mu-1-2n}{2}\right),
\end{align*}
where $\mfrak{C}_-^{\star}(\sigma_2, m) = \left|-N(\sqrt{\mfrak{t}}\mfrak{bc})^2\hat{\tau}/2\right|^{-(m-n-\frac{1}{2})}\mfrak{C}_+^{\star}(\sigma_2, m)$. We obtain the expression
\begin{align}
	\nu_m^{0-}(\chi) = \iota_p\left[C_rD_k^{-1}\lambda_0^{-r}\frac{\langle f_0, \mfrak{R}_r^-(\cdot, m;\chi_{\ell})|W(Y_0)\rangle_{\mfrak{y}_0}}{\langle f, f\rangle}\right]. \label{artificialminus}
\end{align}
\par\bigskip
Define the linear functional
\begin{align*}
	\ell_f:\mathcal{M}_k(\Gamma[\mfrak{b}^{-2}, \mfrak{b}^2\mfrak{y}_0], \psi) &\to\overline{\mathbb{Q}} \\
	g&\mapsto \frac{\langle f_0, g|W(Y_0)\rangle}{\langle f, f\rangle},
\end{align*}
and we have $\ell_f(g) \in\mathbb{Q}(f, g, \Lambda, \psi)$ where $\Lambda$ are the eigenvalues of $f$. Taking the similarly-defined linear functional $\mathcal{L}_f$, found in (3.51) of \cite[p. 109]{Panchishkin}, we have that $\ell_f \in \langle f_0, f_0\rangle\langle f, f\rangle^{-1}\mathcal{L}_f\overline{\mathbb{Q}}$. The functionals $\ell_f$ and $\mathcal{L}_f$ are equal up to some algebraic constant, of bounded $p$-adic norm, determined by the differences of the operator $W(Y_0)$ between this paper and \cite{Panchishkin}. So, by (3.52) of \cite[p. 109]{Panchishkin}, there exist positive definite $\sigma_1, \dots, \sigma_t\in S_+^{\triangledown}$ and $\beta_1, \dots, \beta_t\in \mathbb{Q}(f, \Lambda, \psi)$ satisfying
\begin{align}
	\ell_f(g) = \sum_{i=1}^t \beta_ic_g(\sigma_i, 1). \label{functionalsum}
\end{align}

For any subset $\mathcal{X}\subseteq X_p^{\tors}$ take the integers $\ell$ large enough so that all $\chi_{\ell}$ for $\chi\in\mathcal{X}$ are non-trivial and then take $r$ so that they are all defined modulo $p^r$. Thus the expressions (\ref{artificialplus}) and (\ref{artificialminus}) hold for all $\chi\in\mathcal{X}$. We have
\begin{align}
	\nu_m^{0\pm}(\chi) = \iota_p\left[C_rD_k^{-1}\lambda_0^{-r}\ell_f(\mfrak{R}_r^{\pm}(\cdot, m; \chi_{\ell}))\right].  \label{nufinalexp}
\end{align}
By assumption $|\lambda_0|_p = 1$ and, by definition, $C_r$ and $D_k$ are independent of $\chi$ and have bounded $p$-adic valuation. So whether $\nu_m^{0\pm}(\chi)$ defines a measure or not is directly dependent on the $p$-adic boundedness of $\ell_f(\mfrak{R}_r^{\pm})$. By the expressions (\ref{Rplus}) and the analogous one for $\mfrak{R}_r^-$, the $\mathbb{Q}_{ab}$-coefficients of $\mfrak{R}_r^{\pm}$ do not depend on the modulus $p^{\ell}$ of $\chi$, and therefore setting $c_{\sigma, r, m}^{\pm}(\chi) := \iota_p[c(\sigma, 1; \mfrak{R}_r^{\pm}(\cdot, m; \chi_{\ell})]$ for $m$ as in Proposition \ref{eisenexp} defines a $p$-adic distribution. By (\ref{functionalsum}), we see that $\ell_f(\mfrak{R}_r^{\pm})$ is bounded if $c_{\sigma, r, m}^{\pm}$ give rise to a $p$-adic measure.

Fix $\sigma$ and for any $(\sigma_1, \sigma_2)\in V_{p^r\sigma}$ fix $\sigma_1$; by definition of $c_{\sigma, r, m}^{\pm}$ we may assume $p\nmid|\sigma_1|$. By Proposition \ref{eisenexp} we have, for any $\beta\in\mathbb{Z}$, the congruences
\begin{align}
	|\sigma_2|&\equiv \left|-\tfrac{N(\sqrt{\mfrak{t}}\mfrak{bc})^2}{2}\hat{\tau}\right||\sigma_1|^2\pmod{p^r\mathcal{O}_p}, \label{congruence1}\\
	P(\sigma_2, p^r\sigma; \beta) &\equiv |\sigma_2|^{\beta}\equiv \left[\left|-\tfrac{N(\sqrt{\mfrak{t}}\mfrak{bc})^2}{2}\hat{\tau}\right||\sigma_1|^2\right]^{\beta}\pmod{p^r\mathcal{O}_p}. \label{congruence2}
\end{align}
So we have, by definition of $\mfrak{R}_r^{\pm}$ (see (\ref{Rplus})), and Corollary \ref{Sigma}, that
\begin{align}
	c_{\sigma, r, m}^{\pm}(\chi) \equiv\sum_{(\sigma_1, \sigma_2)\in V_{p^r\sigma}}\chi_{\ell}(|\sigma_1|)|\sigma_1|^{k+m-1-2n}\left[\Sigma_{\sigma_2}\otimes\omega_{\tau}\right](\chi_{\ell} x_p^{[m]}) \pmod{p^r\mathcal{O}_p}, \label{congruence3}
\end{align}
where $\omega_{\tau}$ is the primitive character associated to $\bar{\psi}^*\rho_{\tau}$. The conductor of $\omega_{\tau}$ is $t\mfrak{c}$, where $t$ is the conductor of $\rho_{\tau}$, and $t\mfrak{c}$ is prime to $p$ by the following argument. By assumption $p\nmid\mfrak{c}$ we just show $(t, p) =1$. Since $\mfrak{bc}\subseteq 2\mathbb{Z}$ and $\mfrak{c}\subseteq 4\mathbb{Z}$ we see $p\nmid\mfrak{b}^{-1}$ as well, so that $\rho_{\tau}(p) \neq 0$ if and only if $\left(\frac{|2N(\mfrak{b})^{-1}\tau|}{p}\right)\neq 0$. That latter Legendre symbol makes sense since we know $2N(\mfrak{b}^{-1})\tau\in M_n(\mathbb{Z})$ by the Fourier coefficient property (\ref{c1}) of $f$. We can assume $|\sigma_2|\not\equiv 0\pmod{p^r\mathcal{O}_p}$, since otherwise $c_{\sigma, r, m}^{\pm}(\chi)\equiv 0\pmod{p^r}$ by congruence in (\ref{congruence2}). By definition, following by the use of the congruence in (\ref{congruence1}) above, we see
\begin{align*}
	\left(\frac{|2N(\mfrak{b})^{-1}\tau|}{p}\right)^r &= \left(\frac{N(\mfrak{b}^{-1}\mfrak{t})}{p}\right)^{rn} \left(\frac{|\hat{\tau}|}{p}\right)^r \\
&= \left(\frac{-N(\mfrak{bc^2})/2}{p}\right)^{rn}\left(\frac{|\sigma_2|}{p^r}\right),
\end{align*}
which is non-zero by assumption. So we see $p\nmid t$ and hence $\Sigma_{\sigma_2}\otimes\omega_{\tau}$ defines a $p$-adic measure. Now define $c_{\sigma, r}^{\pm} := c_{\sigma, r, \frac{1}{2}}^{\pm}$, which satisfies $c_{\sigma, r}^{\pm}(\chi x_p^{[m]}) = c_{\sigma, r, m}^{\pm}(\chi)$ by (\ref{congruence3}).

The Kummer congruences now complete the proof that $c_{\sigma, r}^{\pm}$ defines a measure. Assume 
\[
	\sum_{\chi\in\mathcal{X}}b_{\chi}\chi x_p^m\subseteq p^N\mathcal{O}_p,
\]
for some $b_{\chi}\in\mathcal{O}_p$. Then the congruence (\ref{congruence3}) above gives
\begin{align*}
\sum_{\chi\in\mathcal{X}}b_{\chi}c_{\sigma, r}^{\pm}(\chi x_p^{[m]})\equiv\sum_{(\sigma_1, \sigma_2)\in V_{p^r\sigma}}|\sigma_1|^{k-1-2n}\sum_{\chi\in\mathcal{X}}b_{\chi}\int_{\mathbb{Z}_p^{\times}}(\chi_{\ell}x_p^{[m]})(y|\sigma_1|)[d\Sigma_{\sigma_2}\otimes\omega_{\tau}](y)
\end{align*}
taken modulo $p^N\mathcal{O}_p$. The right-hand side of the above is clearly in $p^N\mathcal{O}_p$ since we have $|\sigma_1|\in\mathcal{O}_p^{\times}$ and we know $\Sigma_{\sigma_2}\otimes \omega_{\tau}$ is a measure satisfying the Kummer congruences.

We have shown that $\nu_m^{0\pm}$ defines a $p$-adic measure. To finish the proof of Theorem \ref{main} put $\nu_f^{\pm} := \nu_{\frac{1}{2}}^{0\pm}$. By (\ref{functionalsum}), (\ref{nufinalexp}), (\ref{congruence3}), and the fact that $c_{\sigma, r}^{\pm}(\chi x_p^{[m]}) = c_{\sigma, r, m}^{\pm}(\chi)$, we have 
\[
	\nu_f^{\pm}(\chi x_p^{[m]}) = \nu_m^{0\pm}(\chi). 
\]
The main identities of Theorem \ref{main} follow by using the definitions of the underlying distributions, for example (\ref{plusdistprim}), and the subsequent normalisations.

\affiliationone{
	S. Mercuri \\
	Department of Mathematical Sciences \\
	Durham University \\
	Lower Mountjoy \\
	Stockton Road \\
	Durham DH1 3LE \\
	United Kingdom
	\email{smmercuri@gmail.com}
}

\end{document}